\DeclareOldFontCommand{\rm}{\normalfont\rmfamily}{\mathrm}
\DeclareOldFontCommand{\sf}{\normalfont\sffamily}{\mathsf}
\DeclareOldFontCommand{\tt}{\normalfont\ttfamily}{\mathtt}
\DeclareOldFontCommand{\bf}{\normalfont\bfseries}{\mathbf}
\DeclareOldFontCommand{\it}{\normalfont\itshape}{\mathit}
\DeclareOldFontCommand{\sl}{\normalfont\slshape}{\@nomath\sl}
\DeclareOldFontCommand{\sc}{\normalfont\scshape}{\@nomath\sc}
\newcommand\reallywidehat[1]{%
	\savestack{\tmpbox}{\stretchto{%
			\scaleto{%
				\scalerel*[\widthof{\ensuremath{#1}}]{\kern-.6pt\bigwedge\kern-.6pt}%
				{\rule[-\textheight/2]{1ex}{\textheight}}
			}{\textheight}%
		}{0.5ex}}%
	\stackon[1pt]{#1}{\tmpbox}%
}
\def\C{\mathbb{C}}
\def\E{\mathbb{E}}
\def\N{\mathbb{N}}
\def\P{\mathbb{P}}
\def\R{\mathbb{R}}
\def\Z{\mathbb{Z}}
\def\1{\mathds{1}}
\def\Var{\text{Var}}
\def\Cov{\operatorname{Cov}}
\def\f{f_Y^{(\Delta)}}
\def\tr{\operatorname{tr}}
\def\pa2{\frac{\partial ^2}{\partial \vartheta ^2}}
\newcommand\D{^{(\Delta)}}
\def\vecc{\operatorname{vec}}
\def\sc{\textsc}
\def\bf{\textbf}
\theoremstyle{plain}
\newtheorem{theorem}{Theorem}[section]
\newtheorem{lemma}[theorem]{Lemma}
\newtheorem{proposition}[theorem]{Proposition}
\newtheorem{corollary}[theorem]{Corollary}
\newtheorem*{assumptionletter2}{{\textbf{Assumption N}}}
\newtheorem{assumptionletter}{{\textbf{Assumption}}}
\theoremstyle{definition}
\newtheorem{example}[theorem]{Example}
\newtheorem{remark}[theorem]{Remark}
\newcommand{\VF}[1]{{\textcolor{blue}{#1}}}
\numberwithin{equation}{section}
\title{Empirical spectral processes for \\[2mm]  stationary state space models}
\author{Vicky Fasen-Hartmann \setcounter{footnote}{1}\thanks{Institute of Stochastics, Englerstra{\ss}e 2,
D-76131 Karlsruhe, Germany. \emph{Email:}
\href{mailto:vicky.fasen@kit.edu}{vicky.fasen@kit.edu}}
\and Celeste Mayer \setcounter{footnote}{0}\thanks{Institute of Stochastics, Englerstra{\ss}e 2,
D-76131 Karlsruhe, Germany. \emph{Email:}
\href{mailto:celestececilemayer@gmail.com}{celestececilemayer@gmail.com}}}
\date{}
\begin{document}
%
\maketitle

\begin{abstract}

   In this paper, we consider  function-indexed normalized  weighted integrated \linebreak periodograms for equidistantly sampled multivariate continuous-time state space models which are multivariate continuous-time ARMA processes. Thereby, the sampling distance is fixed and the driving  L\'evy process has at least a finite fourth moment. Under different assumptions on the function space and the moments of the driving Lévy process we derive a  central limit theorem for the  function-indexed normalized weighted integrated periodogram. Either the assumption on the function space or the assumption on the existence of moments of the Lévy process is weaker. Furthermore, we show the weak convergence in both the space of    continuous functions and in the dual space to a Gaussian process and give an explicit representation of the covariance function. The results can be used to derive the asymptotic behavior of the Whittle estimator
   and to construct goodness-of-fit test statistics as the Grenander-Rosenblatt statistic and the  Cram\'er-von Mises statistic.
   We present the exact limit distributions of both statistics and show
   their performance through a simulation study.
\end{abstract}

\noindent
\begin{tabbing}
\emph{AMS Subject Classification 2020: }\=Primary:   	62F03,   62F12, 62M10
\\ \> Secondary:  60G10, 62M86
\end{tabbing}

\vspace{0.2cm}\noindent\emph{Keywords:} Cramér-von Mises test, empirical spectral process, functional central limit theorem,    goodness of fit test,  Grenander-Rosenblatt test,  MCARMA process,  periodogram, state space model

	\section{Introduction}

In the context of stationary time series,
numerous estimators and testing procedures are based on the periodogram, the empirical version of the spectral density (see \cite{Brockwell:Davis:1991,Brillinger,Grenander:Rosenblatt,Priestley:1981}).
Typical examples for estimators  are the Whittle estimator for a parametric model and the empirical estimator of the spectral
distribution function. Classical goodness-of-fit tests for the spectral distribution function are the Grenander-Rosenblatt and
the Cram\'er-von Mises test statistic. They have in common that they have representations as functionals of empirical spectral
 processes
which are based on weighted integrated periodograms.
Therefore, to construct confidence bands for these estimators and asymptotic test statistics
the asymptotic behavior of the empirical spectral process is required.

In this paper, we derive the asymptotic behavior of the empirical spectral process for a  low-frequency sampled $m$-dimensional stationary
state space process $Y=(Y_t)_{t\geq 0}$ which is driven by a $d$-dimensional Lévy process $(L_t)_{t\geq 0}$. A
Lévy process $L=(L_t)_{t \geq 0}$ is a stochastic process with stationary and independent increments satisfying $L_0=0$ almost surely
and having continuous in probability sample paths.
Then a continuous-time linear state space model  with $A\in \R^{r\times r}$, $B\in \R^{r\times d}$, $C\in \R^{m\times r}$   is defined by
\begin{align}
\label{statespace}
\begin{array}{rcl}
dX_t&=& AX_tdt+ BdL_t, \\
Y_t&=& CX_t,   \quad t\geq 0.
\end{array}
\end{align}
In particular,
any multivariate continuous-time ARMA (MCARMA) process has a representation as a state space model (\cite{Marquardt:Stelzer:2007}). They are applied in
diversified fields as, e.g., in signal processing, systems and control, high frequency financial econometrics and financial mathematics.
In particular, a continuous-time state space model sampled discretely is a discrete-time state space model
with strong white noise and an ARMA process with a weak white noise (\cite{Thornton:Chambers:2017}).

In applications one often observes discrete data
although the data are coming from a continuous-time model. However, one is interested to know
the model parameters of the background continuous-time model because then the model parameters
for different sampling frequencies are known.
Therefore, in this paper, we observe the continuous-time process $Y$  at discrete-time
points with distance $\Delta>0$ and define
$Y^{(\Delta)}:=(Y_k^{(\Delta)})_{k\in \N_0}:=(Y_{k\Delta})_{k\in \N_0}$.
The empirical version of the spectral density
 $\f(\omega)=\frac{1}{2\pi}\sum_{h=-\infty}^\infty\Gamma_{Y^{(\Delta)}}(h)\e^{-ih\omega}$,
where $(\Gamma_{Y^{(\Delta)}}(h))_{h\in\Z}$ is the autocovariance function of $Y^{(\Delta)}$, is the periodogram
\begin{align*}
    I_{n,Y^{(\Delta)}}(\omega)=\frac{1}{2\pi}\sum_{h=-n+1}^{n-1}\overline{\Gamma}_{n,Y^{(\Delta)}}(h)e^{-ih\omega}
        =\frac{1}{2\pi n}\left(\sum_{j=1}^{n}Y_j\D e^{-ij\omega}\right)\left(\sum_{k=1}^{n}Y_k\D e^{ik\omega}\right)^\top, \quad  \omega \in [-\pi, \pi],
\end{align*}
where
$$\overline{\Gamma}_{n,Y^{(\Delta)}}(h):=\frac1n\sum_{k=1}^{n-h}Y_{k+h}\D Y_{k}^{(\Delta)\top} \quad \mbox{ and } \quad \overline{\Gamma}_{n,Y^{(\Delta)}}(-h):=\overline{\Gamma}_{n,Y^{(\Delta)}}(h)^\top, \quad h=0,\ldots,n,$$
is the sample autocovariance function.
Then, for a
 function $g: [-\pi,\pi]\to \C^{m\times m}$ with $\int_{-\pi}^{\pi}\|g(x)\|^2dx<\infty$ the normalized 
 weighted integrated periodogram is
\begin{align*}
E_n(g)
:=\sqrt n\int_{-\pi}^{\pi}g(\omega)\left(I_{n,Y^{(\Delta)}}(\omega)-\f(\omega)\right)d\omega.
\end{align*}
The topic of this paper is the asymptotic behavior of the empirical spectral process \linebreak $(\tr(E_n(g)))_{g\in\mathcal{G}_m}$ for some
function space $\mathcal{G}_m$ of square-integrable functions in $\mathcal{L}^2(\left[-\pi,\pi\right])$ where $\tr$ is the abbreviation for
trace. The application of the trace has the advantage that multiplication is getting commutative which is not given
in the multivariate setting. An important function space is $\mathcal{G}_m^F:=\{Id_m\1_{[0,t]}(\cdot):t\in[0,\pi]\}$ where $Id_m$ is the
$m\times m$ dimensional identity matrix. Indeed, the process
$(E_n(g))_{g\in\mathcal{G}_m^F}$ reflects the deviation of the empirical spectral distribution function $\int_0^t I_{n,Y^{(\Delta)}}(\omega)\,d\omega$ from the spectral
distribution function $\int_0^t\f(\omega)\,d\omega$. As we will see, and what is well-known for other time series models, the empirical spectral distribution
function is a
consistent estimator of the spectral distribution function although
the periodogram
is not a consistent estimator for the spectral density; see Theorem 3.1 in \cite{fasen2013a}. The Grenander-Rosenblatt statistic is then
\linebreak
$\sup_{t\in[0,\pi]}|\tr(E_n(Id_m\1_{[0,t]}(\cdot)))|$ and the Cramér-von Mises statistic is \linebreak
 $\int_0^\pi [\tr(E_n(Id_m\1_{[0,t]}(\cdot)))]^2\,dt$, respectively,
which are functionals of the empirical spectral process.
Often, one uses as well the self-normalized versions of them giving \linebreak $m^{-1/2}\sup_{t\in[0,\pi]}|\tr(E_n(\f(\cdot)^{-1}\1_{[0,t]}(\cdot)))|$
and   $m^{-1}\int_0^\pi [\tr(E_n(\f(\cdot)^{-1}\1_{[0,t]}(\cdot)))]^2\,dt$, respectively. In these cases the underlying function space of the empirical
spectral process is
$\mathcal{G}_m^S:=\{\f(\cdot)^{-1}\1_{[0,t]}(\cdot):t\in[0,\pi]\}$. As we show in this paper, the self-normalized versions
have the advantage that under the assumption that the driving Lévy process is a Brownian motion, the limit distribution
of $(\tr(E_n(g)))_{g\in\mathcal{G}_m^S}$ is not dependent on the model parameters anymore. The limit process is a time-scaled Brownian motion on $[0,\pi]$.
If the assumption that the Lévy process is a Brownian motion fails, the limit process has an additional Gaussian correction term depending
on a fourth order cumulant.
 A further popular example of a function space is $\mathcal{G}_m^\Gamma:=\{g_h:h\in\Z\}$ with $g_h(\omega)=\e^{ih\omega}$
which models the sample autocovariance function $(\int_{-\pi}^{\pi}g_h(\omega)I_{n,Y^{(\Delta)}}(\omega)d\omega)_{h\in\Z}=(\overline{\Gamma}_{n,Y^{(\Delta)}}(h))_{h\in\Z}$.
The last example we mention is the Whittle function which is the spectral analogue of the likelihood function in the time domain and its derivatives. They  have
 representations
as weighted integrated periodograms and thus, their limit behaviors can be derived via empirical spectral processes, see \Cref{Whittle} for further details.
These limit behaviors can
be used to prove  the asymptotic normality of the Whittle estimator.
From these examples we already see the broad applications of the empirical spectral process.
 Further examples are given in the overview
paper \cite{Dahlhaus:Poloik:2002}.

To the best of our knowledge there are not many papers studying the empirical spectral process for multivariate processes.
There is to mention \cite{dahlhaus1988empirical} who investigates the behavior of the  empirical spectral process for a wide class of multivariate  time series in discrete time
 with existing moments of all orders and a weak entropy condition for $\mathcal G_m$. However, the moment assumption,
 which is formally an assumption on the cumulant spectrum, is rather strong. The work  was extended  to univariate locally
stationary time series in \cite{dahlhaus2009empirical}. For univariate linear processes  \cite{Mikosch:Norvaisa}  consider the empirical
  spectral process under a stronger entropy condition on the function space $\mathcal G_m$  but  only assuming a finite fourth moment.
   Both papers, \cite{dahlhaus1988empirical} and  \cite{Mikosch:Norvaisa}, show the convergence of the empirical spectral process to a Gaussian process
   in the space of continuous functions on $\mathcal G_m$. But  the proof of \cite{Mikosch:Norvaisa} is not obvious, see \Cref{Remark_3_3}.
   In contrast, \cite{Bardet:Doukhan:Leon} shows the convergence of the empirical spectral process  for a wide class of weakly dependent discrete-time processes in the dual space of $\mathcal G_m$ under the assumption of a finite fourth moment  and a condition on $\mathcal{G}_m$ without using an entropy condition.  Empirical spectral processes for $\alpha$-stable linear processes are studied in \cite{Can::2010},
   however, the cases $\alpha\in(0,1)$ and $\alpha\in(1,2)$ are handled differently. In general, there is a conflict of goals having a weaker assumption
   on the function space $\mathcal G_m$ and a weaker moment assumption on the driving Lévy process. It seems challenging to get
   weak assumptions for both the function space $\mathcal G_m$ and the driving Lévy process.
   The special classes $\mathcal{G}_m^F$ and $\mathcal{G}_m^S$ resulting in the empirical spectral distribution function
   and the self-normalized empirical spectral distribution function, respectively, or the sample standardized empirical spectral distribution  function
   for different univariate time series models were examined in several papers.
   In particular, for short memory linear time series models with finite variance we refer to \cite{Anderson:93}, \cite{Grenander:Rosenblatt} and with infinite variance to \cite{Kluppelberg:1996}.
   \cite{Kokoszka:Mikosch:1997} cover both the finite and the infinite variance case for long memory univariate linear time series.
   A nice survey for linear time series models is \cite{Mikosch:1995}. However,  apart from \cite{dahlhaus1988empirical} these papers
   restrict to one-dimensional models and it seems that even multivariate ARMA models are not covered in the literature yet.
   But the results of the present paper hold as well for causal multivariate ARMA models driven by a strong white noise (see \Cref{Remark 3.3a}).

   In the above mentioned literature as well as in the present paper the proofs are based on the independence assumption
   of the white noise and it seems challenging to relax that assumption to receive results for causal multivariate ARMA models driven by a weak white noise. A reason is that without the independence assumption it is tricky to calculate higher moments.
    Unfortunately, a continuous-time state space model sampled discretely is only a multivariate ARMA process with a weak
    white noise and the exact representations of the ARMA parameters are not known. Therefore, it is difficult to use that approach for deriving the asymptotic behavior of the  continuous-time state space model sampled discretely.

The paper is structured on the following way. In \Cref{sec:Preliminaries}, we present preliminaries on discrete-time sampled state
space models and on the function spaces  $\mathcal{G}_m$ considered in this paper. The main achievements  are presented in \Cref{sec:main:results}:
The weak convergence of the empirical spectral process  to a Gaussian process in the space of continuous functions on $\mathcal{G}_m$ in \Cref{Satz::intP::Y2}
and in the dual space of $\mathcal{G}_m$ in \Cref{Satz::intP::Y} and \Cref{Theorem:main3}, respectively.
The  covariance function of the Gaussian process has an explicit representation given there. We distinguish different model assumptions allowing either weaker assumptions
on $\mathcal{G}_m$ or weaker moment assumptions on the driving Lévy process, respectively. The applications
of these results to construct goodness-of-fit tests for state space models are given in \Cref{sec:goodnessoffittest}. In particular,
 the Grenander-Rosenblatt and the Cram\'er-von Mises test statistics are further explored and their performance are demonstrated through a simulation study.
 Finally, \Cref{sec:Proofs} contains the proofs of the main theorems. The
 proofs of some auxiliary results  are moved to the Appendix.

	\subsection*{Notation}
	For some matrix $A$, $\tr(A)$ stands for the trace of $A$ and $A^\top$ for its transpose. The Kronecker product of some matrices $A$ and $B$ is denoted by $A\otimes B$. We write $A[S,T]$ for the $(S,T)$-th component of $A$ and $\| A\|=\sqrt{\sum_{S=1}^{r_1}\sum_{T=1}^{r_2}|A[S,T]|^2}=\sqrt{\tr(A^HA)}$ for the Frobenius norm of $A\in\C^{r_1\times r_2}$
where $A^H$ is the adjoint of $A$. The Frobenius norm can be replaced by any sub-multiplicative matrix norm with minor adaptions. The $r$-dimensional identity matrix is written as $Id_r$. Throughout the article, we write the $h$-th Fourier coefficient of some  square integrable function $g$ on $[-\pi,\pi]$  as
$\widehat g_h=\frac{1}{2\pi}\int_{-\pi}^\pi g(\omega)\e^{-i\omega h}\,d\omega \text{ for } h\in \Z$ such that
$g(\omega)=\sum_{h=-\infty}^{\infty}\widehat g_h\e^{i\omega h}$ for $\omega \in [-\pi,\pi]$. For convergence in distribution and convergence in probability, we write $\overset{\mathcal D}{\longrightarrow}$ and $\overset{\P}{\longrightarrow}$, respectively. We use $o_\P(1)$ and $O_\P(1)$ as shorthand for terms, which converge to 0 in probability and which are tight, respectively. 
	 Finally, $\mathfrak{C}>0$ is a constant which may change from line to line.

\section{Preliminaries} \label{sec:Preliminaries}

\subsection{State space models} \label{sec:statespacemodels}

For the  state space model \eqref{statespace} we assume the following conditions:

\begin{assumptionletter} $\mbox{}$
\begin{itemize}
    \item[(a)] $\E\|L_1\|^4<\infty$.
    \item[(b)] The eigenvalues of $A$ have strictly negative real parts and $CC^\top=Id_m$.
\end{itemize}
\end{assumptionletter}

Note that the Lévy process $(L_t)_{t\geq 0}$ can as well be extended  to the negative real numbers by
defining $L_t=L_t\1_{\{t\geq 0\}}-\widetilde L_{t-}\1_{\{t<0\}}$
where $(\widetilde L_t)_{t\geq 0}$ is an independent copy of the Lévy process $(L_t)_{t\geq 0}$.

\begin{remark} $\mbox{}$
\begin{itemize}
\item[(a)] Consequently, there exists a stationary causal version for the multivariate Ornstein-Uhlenbeck process $X$ as
$X_t=\int_{-\infty}^t \e^{A(t-u)}B\,dL_u$ for $t\geq 0$ (cf. \cite{Masuda:2004}) and hence,
a stationary causal
representation for $Y$ as $Y_t=\int_{-\infty}^t C\e^{A(t-u)}B\,dL_u$ for $t\geq 0$. For such a causal representation the
assumption that the matrix
$A$ has strictly negative real parts is necessary. In the following, we will always assume that $Y$ has such
a stationary causal representation.

\item[(b)] The assumption  $CC^\top=Id_m$ is not restrictive: The MCARMA processes as defined in \cite{Marquardt:Stelzer:2007}
are state space models and satisfy
this condition.
 Under the assumption
of finite second moments,  the classes of stationary linear state space models and
MCARMA models are equivalent (cf. \cite{Schlemm:Stelzer:2012}, Corollary 3.4).
Thus, for any state space model there exists a representation satisfying
$CC^\top=Id_m$.
\end{itemize}
\end{remark}

If  Assumption A holds, the discrete-time sampled
process $Y^{(\Delta)}$  satisfies the following conditions:

	\begin{proposition}[\cite{Schlemm:Stelzer:2012}, Theorem 3.6] \label{SSPsampledNat}~\\
	Let Assumption $A$ hold. Then 
	\begin{align*}
	Y_k^{(\Delta)}=CX_k^{(\Delta)} \quad \text{ and } \quad
	X^{(\Delta)}_k=e^{A\Delta}X^{(\Delta)}_{k-1}+N^{(\Delta)}_k,\quad k\in\N,
    \end{align*}
    where $$ N_k^{(\Delta)}=\int_{(k-1)\Delta}^{k\Delta}e^{A(k\Delta-u)}BdL_u, \quad k\in\Z.$$
	The sequence $(N_k^{(\Delta)})_{k\in \Z}$ in $\R^{r}$ is an i.i.d. sequence with mean zero and covariance matrix $\Sigma_N^{(\Delta)}=\int_0^{\Delta}e^{Au}B\Sigma_LB^\top e^{A^\top u}du.$
	Furthermore, $(Y_k^{(\Delta)})_{k\in \N_0}$ has the vector MA$(\infty)$ representation $$Y_k^{(\Delta)}=\sum_{j=0}^\infty \Phi_j N_{k-j}^{(\Delta)}, \quad k \in \N_0,$$ where  the MA polynomial is
\begin{align}\label{CoeffPhi}
    \Phi(x)=\sum_{j=0}^{\infty}\Phi_jx^j=\sum_{j=0}^{\infty}Ce^{A\Delta j}x^j\quad \text{ for }   x\in \C \text{ with }\|x\|=1.\end{align}
\end{proposition}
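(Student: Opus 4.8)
The plan is to reduce everything to the stationary causal moving-average representation $X_t=\int_{-\infty}^t e^{A(t-s)}B\,dL_s$ recalled in the remark above, using the independent and stationary increments of $L$ together with the semigroup identity $e^{A(s+t)}=e^{As}e^{At}$. First I would split the improper integral defining $X_{k\Delta}$ at the point $(k-1)\Delta$,
\begin{align*}
X_{k\Delta}=\int_{-\infty}^{(k-1)\Delta}e^{A(k\Delta-s)}B\,dL_s+\int_{(k-1)\Delta}^{k\Delta}e^{A(k\Delta-s)}B\,dL_s,
\end{align*}
and pull the constant factor $e^{A\Delta}$ out of the first integral (writing $e^{A(k\Delta-s)}=e^{A\Delta}e^{A((k-1)\Delta-s)}$), so that the first term equals $e^{A\Delta}X_{(k-1)\Delta}$ while the second is $N_k^{(\Delta)}$ by definition. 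This gives the recursion $X_k^{(\Delta)}=e^{A\Delta}X_{k-1}^{(\Delta)}+N_k^{(\Delta)}$, and $Y_k^{(\Delta)}=CX_k^{(\Delta)}$ is immediate from \eqref{statespace}.

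Next I would treat the noise sequence $(N_k^{(\Delta)})_{k\in\Z}$. Each $N_k^{(\Delta)}$ is an integral of a deterministic matrix-valued integrand over $((k-1)\Delta,k\Delta]$, and since $L$ has independent increments, integrals over the disjoint intervals $((k-1)\Delta,k\Delta]$, $k\in\Z$, are independent. The change of variables $u\mapsto u-(k-1)\Delta$ combined with the stationarity of the increments of $L$ shows that all $N_k^{(\Delta)}$ have the same law (and mean zero, using the centering $\E[L_1]=0$ in force throughout). Its covariance follows from the It\^o-type isometry $\Cov\bigl(\int_0^\Delta f(u)\,dL_u\bigr)=\int_0^\Delta f(u)\Sigma_L f(u)^\top\,du$ for deterministic $f$, applied with $f(u)=e^{A(\Delta-u)}B$ and the substitution $v=\Delta-u$, which yields $\Sigma_N^{(\Delta)}=\int_0^\Delta e^{Av}B\Sigma_L B^\top e^{A^\top v}\,dv$.

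For the MA($\infty$) representation I would iterate the recursion,
\begin{align*}
X_k^{(\Delta)}=\sum_{j=0}^{p-1}e^{A\Delta j}N_{k-j}^{(\Delta)}+e^{A\Delta p}X_{k-p}^{(\Delta)},
\end{align*}
and send $p\to\infty$. By part (b) of Assumption A the eigenvalues of $e^{A\Delta}$ have modulus strictly less than $1$, hence $\|e^{A\Delta j}\|\leq\mathfrak{C}\rho^j$ for some $\rho\in(0,1)$; together with the $L^2$-boundedness of the stationary process $X^{(\Delta)}$ this forces $e^{A\Delta p}X_{k-p}^{(\Delta)}\to 0$ in $L^2$, while $\sum_{j\geq0}\|e^{A\Delta j}\|<\infty$ and the i.i.d.\ mean-zero finite-variance structure of $(N_k^{(\Delta)})$ make $\sum_{j\geq0}e^{A\Delta j}N_{k-j}^{(\Delta)}$ converge absolutely in $L^2$. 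Hence $X_k^{(\Delta)}=\sum_{j\geq0}e^{A\Delta j}N_{k-j}^{(\Delta)}$, and multiplying by $C$ gives $Y_k^{(\Delta)}=\sum_{j\geq0}\Phi_jN_{k-j}^{(\Delta)}$ with $\Phi_j=Ce^{A\Delta j}$, which is the claimed form of $\Phi(x)$.

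I do not expect a genuine obstacle here, since the statement is a standard consequence of the calculus of L\'evy-driven stochastic integrals (indeed it is Theorem 3.6 of \cite{Schlemm:Stelzer:2012}, which one may also simply cite); the step requiring the most care is the rigorous justification of the integral manipulations --- the splitting of the improper integral over $(-\infty,(k-1)\Delta]$, the factorization of the matrix exponential under the integral sign, and the interchange of the infinite series with the $L^2$-limit --- all of which rest on the isometry property and dominated-convergence arguments for integrals against $L$.
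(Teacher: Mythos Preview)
Your proof is correct and complete. The paper does not actually prove this proposition; it simply cites \cite{Schlemm:Stelzer:2012}, Theorem 3.6, as you yourself note at the end of your proposal, so your sketch in fact provides more than what appears in the paper.
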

Due to Assumption $A$ the coefficients $\Phi_j$ in \eqref{CoeffPhi} are exponentially decreasing which directly implies \begin{align}\label{PhiBound}
\sum_{h=0}^\infty h^q \|\Phi_h\|<\infty \quad \text{for }q \in \N_0.
\end{align}
On basis of \cite{Brockwell:Davis:1991}, Theorem 11.8.3, the spectral density $\f$ of  $Y^{(\Delta)}$ is
\begin{eqnarray*} \label{spec}
        \f(\omega)&=&\frac{1}{2\pi}\Phi(e^{-i\omega})\Sigma_N^{(\Delta)}\Phi(e^{i\omega})^\top, 
        \quad \omega \in [-\pi,\pi]. \nonumber
\end{eqnarray*}

\subsection{Function spaces} \label{sec:functionspaces}

Next, we present the setup for the function spaces $\mathcal{G}_m$ in the definition of the empirical spectral process.
Therefore, define
\begin{align*}
    \mathcal H_{m}:=\{g: [-\pi,\pi]\to \C^{m\times m}\ |& \,\,\, \|g(\cdot)\|\in\mathcal{L}^2([-\pi,\pi]) \},
\end{align*}
and equip $\mathcal H_{m}$ with the norm
$$\|g\|^2_2:=\frac 1 {2\pi}\int_{-\pi}^{\pi}\|g(x)\|^2dx \quad \text{ for } g\in \mathcal H_{m},$$
which generates the metric $d_2(f,g)=\|f-g\|_2$ for $f,g \in \mathcal H_m$.
 Suppose $\mathcal G_{m}\subseteq \mathcal H_{m}$. Then, we define for $l\in\N$
the space
$
    \mathcal F_{m,l}:=\{F:\mathcal G_{m} \to \C^{l\times l}|\,\, F \text{ is bounded}\}
$
and equip $\mathcal F_{m,l}$ with the metric $d_{\mathcal G_{m}}$ generated by the norm
$$\|F\|_{\mathcal G_{m}}:=\sup_{g\in \mathcal G_m}|F(g)| \quad \text{ for }F\in\mathcal F_{m,l}.$$
Finally, we define
\beao
    \mathcal C(\mathcal G_m,\C^{l\times l}):=\{F\in\mathcal F_{m,l}| F \text{ is uniformly  continuous with respect to the metric }d_{\mathcal G_{m}}\}.
\eeao
The space $( \mathcal C(\mathcal G_m,\C^{l\times l}),d_{\mathcal G_{m}})$ is complete and hence, a Banach space.
 Note, a metric space $(\mathcal G, d)$ is  totally bounded iff its covering numbers $$N(\varepsilon,\mathcal G,d):=\inf\{u |\ \exists g_1,\ldots,g_u \in \mathcal G| \inf_{i=1,\ldots,u}d(g,g_i)\leq \varepsilon \ \forall g \in \mathcal G \}$$ are finite for every $\varepsilon>0$.
 If we assume additionally that $(\mathcal G_{m},d_{2})$
is totally bounded then $( \mathcal C(\mathcal G_m,\C^{l\times l}),d_{\mathcal G_{m}})$ is a separable Banach space.

For $g\in \mathcal G_{m}$ and $\Phi$ as in \eqref{CoeffPhi}  define the function
    \beao
        g^{\Phi}(\omega):=\Phi(e^{i\omega})^\top g(\omega) \Phi(e^{-i\omega}), \quad \omega\in[-\pi,\pi],
    \eeao
    with Fourier coefficients
    \beao
        \widehat{g^{\Phi}_h}=\frac{1}{2\pi}\int_{-\pi}^{\pi}\Phi(e^{i\omega})^\top g(\omega) \Phi(e^{-i\omega})e^{-ih\omega} d\omega, \quad h\in\Z.
    \eeao
Moreover, for $s\geq 0$ we define the space  $\mathcal H_m^s:=\{g\in \mathcal H_m:\|g\|_{\Phi,s}<\infty\}$ with  
    $$ \|g\|_{\Phi,s}^2:=\sum_{h=-\infty}^\infty(1+|h|)^{2s} \|\widehat{g^\Phi_h}\|^2.$$
Then $(\mathcal H_m^s,\|\cdot\|_{\Phi,s})$ is a normed space.
 Indeed, $\|g\|_{\Phi,s}=0$ implies $\|\widehat{g^\Phi_h}\|=0$ for all $h\in \Z$. A conclusion of \Cref{lemma:aux1} below is then $\|\widehat g_h\|=0$ for all $h\in\Z$
and hence, $g$ is zero almost everywhere.

\begin{remark} $\mbox{}$ \label{Remark 3.1}
\begin{itemize}
    \item[(a)] For $s=0$ we receive with Parseval's equality
$$ \|g\|_{\Phi,0}^2=\sum_{h=-\infty}^\infty \|\widehat{g^\Phi_h}\|^2
=\frac 1 {2\pi} \int_{-\pi}^\pi \|g^\Phi(\omega)\|^2d\omega= \|g^\Phi\|_{2}^2.$$
\item[(b)] Suppose $\sup_{g\in \mathcal G_m}\|g\|_2<\infty$. Then,
 $\sup_{\omega\in [-\pi,\pi]}\|\Phi(e^{i\omega})\|\leq \mathfrak C$  implies
 \begin{eqnarray*}
\sup_{g\in \mathcal G_m} \|g\|_{\Phi,0}^2=\sup_{g\in \mathcal G_m}\frac 1 {2\pi}\int_{-\pi}^\pi \left\| \Phi(e^{i\omega})^\top g(\omega)\Phi(e^{-i\omega})\right\|^2 d\omega \leq  \mathfrak C^2\sup_{g\in \mathcal G_m} \|g\|_2^2<\infty.
\end{eqnarray*}
The same arguments yield \begin{align}\label{FK_absch}
\sup_{g\in \mathcal G_{m}}\sum_{h=-\infty}^\infty\|\reallywidehat{\left(g(\cdot)\Phi(e^{i\cdot})\right)}_h\|^2<\infty.
\end{align}
\end{itemize}
\end{remark}
The next lemma relates  $\|\widehat g_h\|$ to $\|\widehat{ g^\Phi_h}\|$.

\begin{lemma} \label{lemma:aux1}
$\mbox{}$
\begin{itemize}
    \item[(a)] There exists a constant $\mathfrak C>0$ such that for any $g\in  \mathcal H_m$ and $h\in\N_0$:
        \beao
            \|\widehat g_h\|\leq \mathfrak C\left[\|\widehat{ g^\Phi_h}\|+\|\widehat{ g^\Phi_{h-1}}\|+\|\widehat{g^\Phi_{h+1}}\|\right].
        \eeao
    \item[(b)] Let $g\in  \mathcal H_m$. Suppose there exist some constants $\lambda,\mathfrak C_1>0$ such that
     $\|\widehat g_h\|\leq \mathfrak C_1\e^{-\lambda h}$ for any $h\in\N$. Then, there exist
     as well a constant $\mathfrak C_2>0$ and some $\nu>0$ such that  $\|\widehat {g^\Phi_h}\|\leq \mathfrak C_2\e^{-\nu h}$
     for any $h\in\N$.
\end{itemize}
 \end{lemma}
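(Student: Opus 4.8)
\emph{Plan.} For part (a) the decisive observation is that, although $\Phi$ is rectangular, the normalisation $CC^\top=I_m$ from Assumption~A furnishes an explicit degree-one \emph{polynomial right inverse}. Set $\Phi^-(z):=(I_N-e^{A\Delta}z)C^\top$; then, since $\Phi(z)=C(I_N-e^{A\Delta}z)^{-1}$, one has
\[
\Phi(z)\Phi^-(z)=C(I_N-e^{A\Delta}z)^{-1}(I_N-e^{A\Delta}z)C^\top=CC^\top=I_m
\]
for all $z$ with $|z|\le 1$ (indeed wherever $I_N-e^{A\Delta}z$ is invertible). Multiplying the defining relation $g^\Phi(\omega)=\Phi(\e^{i\omega})^\top g(\omega)\Phi(\e^{-i\omega})$ on the left by $\Phi^-(\e^{i\omega})^\top$ and on the right by $\Phi^-(\e^{-i\omega})$, and using $\Phi^-(\e^{i\omega})^\top\Phi(\e^{i\omega})^\top=\big(\Phi(\e^{i\omega})\Phi^-(\e^{i\omega})\big)^\top=I_m$ together with $\Phi(\e^{-i\omega})\Phi^-(\e^{-i\omega})=I_m$, I obtain the inversion identity $g(\omega)=\Phi^-(\e^{i\omega})^\top g^\Phi(\omega)\Phi^-(\e^{-i\omega})$, valid for all $\omega\in[-\pi,\pi]$.

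Now $\Phi^-(\e^{i\omega})^\top=C-Ce^{A^\top\Delta}\e^{i\omega}$ and $\Phi^-(\e^{-i\omega})=C^\top-e^{A\Delta}C^\top\e^{-i\omega}$ are trigonometric polynomials of degree one, so convolving their (finitely many) Fourier coefficients with those of $g^\Phi$ gives the exact formula
\[
\widehat g_h=C\,\widehat{g^\Phi_h}\,C^\top+Ce^{A^\top\Delta}\,\widehat{g^\Phi_h}\,e^{A\Delta}C^\top-Ce^{A^\top\Delta}\,\widehat{g^\Phi_{h-1}}\,C^\top-C\,\widehat{g^\Phi_{h+1}}\,e^{A\Delta}C^\top ,\qquad h\in\N_0 .
\]
Part (a) follows at once from the triangle inequality and submultiplicativity of the Frobenius norm, with $\mathfrak C:=\|C\|^2\big(1+\|e^{A\Delta}\|\big)^2$ (using $\|e^{A^\top\Delta}\|=\|e^{A\Delta}\|$).

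For part (b) I would pass through the auxiliary function $u(\omega):=g(\omega)\Phi(\e^{-i\omega})$, whose Fourier coefficients are $\widehat u_\ell=\sum_{k\ge 0}\widehat g_{\ell+k}\Phi_k$ and which satisfies $\widehat{g^\Phi_h}=\sum_{k\ge 0}\Phi_k^\top\widehat u_{h-k}$. By Assumption~A the coefficients $\Phi_k=Ce^{A\Delta k}$ decay exponentially, say $\|\Phi_k\|\le\mathfrak C\rho^k$ with $\rho\in(0,1)$ (cf.\ \eqref{PhiBound}); moreover $\|\widehat g_\ell\|\le\mathfrak C_1\e^{-\lambda\ell}$ for $\ell\ge1$ by hypothesis and $\|\widehat g_\ell\|\le M:=\sup_{\ell\in\Z}\|\widehat g_\ell\|<\infty$ since $g\in\mathcal H_m$. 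These give, for $\ell\ge1$, $\|\widehat u_\ell\|\le\mathfrak C_1\mathfrak C\,\e^{-\lambda\ell}\sum_{k\ge0}(\e^{-\lambda}\rho)^k\le \mathfrak C'\e^{-\lambda\ell}$, and for every $\ell$, $\|\widehat u_\ell\|\le\frac1{2\pi}\sup_\omega\|\Phi(\e^{-i\omega})\|\int_{-\pi}^\pi\|g(\omega)\|\,d\omega=:M'<\infty$. Splitting $\widehat{g^\Phi_h}=\sum_{0\le k\le h/2}\Phi_k^\top\widehat u_{h-k}+\sum_{k>h/2}\Phi_k^\top\widehat u_{h-k}$: in the second sum $\|\Phi_k\|\le\mathfrak C\rho^{h/2}$ and $\|\widehat u_{h-k}\|\le M'$, giving an $O(\rho^{h/2})$ bound; in the first sum $h-k\ge h/2\ge 1$, so $\|\widehat u_{h-k}\|\le\mathfrak C'\e^{-\lambda(h-k)}$ and the sum is $\le\mathfrak C'\e^{-\lambda h}\sum_{0\le k\le h/2}(\rho\e^{\lambda})^k$, which is $O(h\,\e^{-\lambda h})$ if $\rho\e^{\lambda}\le1$ and $O\big((\rho\e^{-\lambda})^{h/2}\big)$ if $\rho\e^{\lambda}>1$. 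In every case $\|\widehat{g^\Phi_h}\|\le\mathfrak C_2\e^{-\nu h}$ for a suitable $\nu>0$, which is the claim.

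The only genuinely delicate point is in (b): nothing is assumed about $\widehat g_\ell$ for $\ell\le0$, so the convolution must be arranged so that whenever such a merely bounded coefficient appears it is multiplied by a factor $\Phi_k$ with $k\gtrsim h$; routing everything through $u$ and splitting at $k=h/2$ achieves exactly this, and the harmless case distinction $\rho\e^{\lambda}\gtrless1$ only records whether the geometric weight already suffices or a slightly smaller exponent must absorb the number of terms. Part (a), by contrast, is purely mechanical once the polynomial right inverse $\Phi^-$ has been identified.
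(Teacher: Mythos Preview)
Your proof is correct. For part (a) you take essentially the same route as the paper: the paper computes the inclusion--exclusion identity
\[
C^\top\widehat g_hC=\widehat{g^\Phi_h}-e^{A^\top\Delta}\widehat{g^\Phi_{h-1}}-\widehat{g^\Phi_{h+1}}e^{A\Delta}+e^{A^\top\Delta}\widehat{g^\Phi_h}e^{A\Delta}
\]
and then uses $\|C^\top\widehat g_hC\|=\|\widehat g_h\|$ (from $CC^\top=I_m$); multiplying this identity by $C$ on the left and $C^\top$ on the right gives exactly your four-term formula. Your packaging via the degree-one right inverse $\Phi^-(z)=(I_N-e^{A\Delta}z)C^\top$ is a clean way to discover the identity, but the content is identical.

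For part (b) the paper works directly with the double convolution $\widehat{g^\Phi_h}=\sum_{j_1,j_2\ge0}\Phi_{j_1}^\top\widehat g_{h-j_1+j_2}\Phi_{j_2}$ and bounds $\sum_{j_1,j_2}\|\Phi_{j_1}\|\|\Phi_{j_2}\|e^{-\lambda|h-j_1+j_2|}$ by splitting on the sign of $h-j_1+j_2$. Your route through $u(\omega)=g(\omega)\Phi(e^{-i\omega})$ factors the double sum into two single convolutions and splits at $k=h/2$; this is a different bookkeeping of the same estimate. One point in your favour: the lemma's hypothesis is one-sided ($h\in\N$ only), and your argument uses precisely that, invoking only the uniform bound $\|\widehat u_\ell\|\le M'$ when $\ell$ may be nonpositive. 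The paper's write-up tacitly inserts $e^{-\lambda|h-j_1+j_2|}$ for all indices, which strictly speaking needs the two-sided bound; this is harmless in the paper's applications (Hermitian spectral densities) but your treatment matches the stated hypothesis more faithfully.
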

 A direct consequence of this lemma
 and the definition of $\|\cdot\|_{\Phi,s}$
 is the following:

\begin{corollary} \label{corollary:aux}
Let $\mathcal G_m \subseteq \mathcal H_m$ and $s\geq 0$. Suppose there exists a constant $\mathfrak C_1>0$
such that \linebreak $\sup_{g\in\mathcal G_m}\|g\|_{\Phi,s}\leq \mathfrak C_1$. Then, there exists a constant $\mathfrak C_2>0$
such that $\sup_{g\in\mathcal G_m}\|g\|_{2}\leq \mathfrak C_2$.
\end{corollary}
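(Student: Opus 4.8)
\emph{Proof plan.} The statement is essentially an immediate consequence of \Cref{lemma:aux1}(a) together with Parseval's identity, so the argument will be short. The plan is to pass from $\|g\|_2$ to the Fourier coefficients of $g^\Phi$, bound these uniformly using the hypothesis, and then bound $\|g\|_2$ back in terms of them.

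First I would use that the seminorms $\|\cdot\|_{\Phi,s}$ are monotone in $s$: since $(1+|h|)^{2s}\ge 1$ for every $h\in\Z$ we have $\|g\|_{\Phi,0}\le\|g\|_{\Phi,s}$ for all $g\in\mathcal H_m$, so the hypothesis yields $\sup_{g\in\mathcal G_m}\|g\|_{\Phi,0}\le\mathfrak C_1$. By Remark~\ref{Remark 3.1}(a) (Parseval's identity applied to $g^\Phi$) this reads $\sup_{g\in\mathcal G_m}\sum_{h\in\Z}\|\widehat{g^\Phi_h}\|^2\le\mathfrak C_1^2$. Next I would invoke \Cref{lemma:aux1}(a): for every $g\in\mathcal H_m$ and $h\in\N_0$, $\|\widehat g_h\|\le\mathfrak C\big(\|\widehat{g^\Phi_h}\|+\|\widehat{g^\Phi_{h-1}}\|+\|\widehat{g^\Phi_{h+1}}\|\big)$. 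The only point requiring a word of comment is the range of indices, since $\|g\|_2^2=\sum_{h\in\Z}\|\widehat g_h\|^2$ involves negative $h$ as well: to cover $\|\widehat g_{-h}\|$ for $h\in\N$ I would apply the same lemma to the reflected function $g^{\#}(\omega):=g(-\omega)^\top$, which again lies in $\mathcal H_m$, whose $h$-th Fourier coefficient is $\widehat g_{-h}^{\top}$, and which satisfies $(g^{\#})^\Phi(\omega)=g^\Phi(-\omega)^\top$ (here one uses that $\Phi$ has real coefficients, so $\overline{\Phi(e^{i\omega})}=\Phi(e^{-i\omega})$ does not even enter; it is just $(ABC)^\top=C^\top B^\top A^\top$), hence $\widehat{(g^{\#})^\Phi_h}=\widehat{g^\Phi_{-h}}^{\top}$. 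Since the Frobenius norm is invariant under transposition, this gives exactly the same three-term bound with all indices reflected, and therefore, for every $h\in\Z$, $\|\widehat g_h\|\le\mathfrak C\big(\|\widehat{g^\Phi_h}\|+\|\widehat{g^\Phi_{h-1}}\|+\|\widehat{g^\Phi_{h+1}}\|\big)$.

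To finish, I would square this bound (using $(a+b+c)^2\le 3(a^2+b^2+c^2)$) and sum over $h\in\Z$; on the right-hand side each term $\|\widehat{g^\Phi_k}\|^2$ appears at most three times, so by Parseval's identity applied now to $g$ itself (recall $g\in\mathcal H_m$) one obtains $\|g\|_2^2=\sum_{h\in\Z}\|\widehat g_h\|^2\le 9\,\mathfrak C^2\sum_{k\in\Z}\|\widehat{g^\Phi_k}\|^2=9\,\mathfrak C^2\,\|g\|_{\Phi,0}^2\le 9\,\mathfrak C^2\,\mathfrak C_1^2$. Taking the supremum over $g\in\mathcal G_m$ gives the claim with $\mathfrak C_2:=3\,\mathfrak C\,\mathfrak C_1$.

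I do not foresee any genuine obstacle here: everything reduces to \Cref{lemma:aux1}(a), monotonicity in $s$, and Parseval. The only mildly delicate bookkeeping is the treatment of negative Fourier indices, and that is handled cleanly by the reflection $g\mapsto g(-\,\cdot\,)^\top$, which preserves $\mathcal H_m$ and intertwines both $g\mapsto\widehat g_h$ and $g\mapsto g^\Phi$ with index reflection.
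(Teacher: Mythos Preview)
Your proof is correct and follows exactly the route the paper has in mind: the paper states the corollary as ``a direct consequence of this lemma and the definition of $\|\cdot\|_{\Phi,s}$'' without further details, and your write-up fills these in via monotonicity in $s$, the three-term bound of \Cref{lemma:aux1}(a), and Parseval. The reflection trick $g\mapsto g(-\cdot)^\top$ to cover negative Fourier indices is a clean patch for the fact that \Cref{lemma:aux1}(a) is only stated for $h\in\N_0$; in fact the identity derived in the paper's own proof of that lemma holds verbatim for all $h\in\Z$, so the restriction there is inessential, but your workaround is entirely sound.
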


\section{The functional central limit theorem} \label{sec:main:results}

The function space $\mathcal G_m$  satisfies either of the following assumptions.

 \begin{assumptionletter} \label{AB} $\mbox{}$\\
 Let $\mathcal G_m\subseteq  \mathcal H_m$ be totally bounded, $h\in\mathcal{H}_m$  and $\|g\|_{\Phi,s}<\infty$ for any $g\in \mathcal G_m$ and
 some $s\geq 0$. Suppose that
 one of the following conditions hold:
 \begin{itemize}
 	\item[(B1)]   Suppose  $s>1/2$.
 	\item[(B2)] Suppose $s=0$ and there exists a constant $K_L>0$ such that the joint cumulant of $BL_1$  satisfies
    \beam \label{cum1}
        \text{cum}(BL_1[k_1],\ldots,BL_1[k_j]) \leq K_L^j \quad \text{ for all }  k_1,\ldots,k_j\in\{1,\ldots,r\} \text{ and }j\in \N,
    \eeam
    where $BL_1[k]$ denotes the $k$-th component of the random vector $BL_1$ in $\R^r$.
 Furthermore, assume that
    \beam \label{B3}
 \int_0^1 [\log(N(\varepsilon,\mathcal G_{m}, d_{2}))]^2d\varepsilon<\infty.
 \eeam
 	\item[(B3)]  Suppose $s=0$.   Let the support of $h$ be an interval and $h$  be continuously differentiable in the interior of its support. The function space is defined as
            $$
                \mathcal{G}_m:=\{h(\cdot)\1_{\left[-\pi,t\right]}(\cdot):t\in[-\pi,\pi]\}.$$
 \end{itemize}
\end{assumptionletter}
Indeed, \eqref{cum1} is already satisfied if there exists a constant $\widetilde K_L>0$ such that
 $$\text{cum}(L_1[k_1],\ldots,L_1[k_j]) \leq \widetilde K_L^j \quad \text{ for all }  k_1,\ldots,k_j\in\{1,\ldots,d\} \text{ and }j\in \N.$$
In the following, we present some examples for function spaces $\mathcal{G}_m$ satisfying \eqref{B3}.

\begin{example}\label{remark1}~
	\begin{itemize}
    \item[(a)]
         Let $\mathcal{G}_m$ be defined as in $(B3)$. Since $\sup_{x\in[-\pi,\pi]}\|h  (x)\|<\infty$, it is straightforward to see that the covering numbers satisfy
          $$N\left(\sup_{x\in[-\pi,\pi]}\|h  (x)\|/2n,\mathcal{G}_m,d_2\right)\leq n \quad \text{ for any } n\in\N.$$
         A direct consequence is that $N(\epsilon,\mathcal{G}_m,d_2)\leq \mathfrak{C}\epsilon^{-1}$
         for any $\epsilon>0$ and \eqref{B3} in (B2) is satisfied. In particular, this space is totally bounded. But in (B2)
          we have additionally the cumulant condition which is not necessary in (B3).  
         The function spaces in $(B3)$ do not satisfy $(B1)$. Due to \Cref{Remark 3.1}(b) and $h\in\mathcal{H}_m$, the condition
         $\sup_{g\in {\mathcal G}_m}\|g\|_{\Phi,0}=\sup_{t\in[0,\pi]}\|h(\cdot)\1_{\left[0,t\right]}(\cdot)\|_{\Phi,0}\leq\mathfrak{C}\|h\|_2<\infty$ is automatically satisfied.

    \item[(b)]Suppose $\mathcal{G}_1$ is a
      Vapnik-Chervonenkis class (VC class) with VC index $V(\mathcal{G}_{1})$, see \citet[Section 2.6.2]{vanderVaart:Wellner} for a definition, and $\widetilde g$ is an envelope with $\int_{-\pi}^\pi|\widetilde g(x)|^2\,dx<\infty$. Then, due to \citet[Theorem 2.6.7]{vanderVaart:Wellner}
      there exists a constant $\mathfrak{C}>0$ such that  $N(\varepsilon,\mathcal G_{1}, d_{2})\leq \mathfrak{C}\varepsilon^{-V(\mathcal{G}_{1})+1}$ for $0<\epsilon<1$ and hence, \eqref{B3} in (B2) is satisfied.
     \item[(c)] Let $(\Theta,\tau)$ be a compact metric space and
        $\mathcal{G}_m=\{g_\vartheta\in\mathcal{H}_m:\vartheta\in\Theta\}$. Suppose the map $\vartheta\mapsto g_{\vartheta}$
        is Hölder continuous with exponent $b>0$ and the necessary number of balls  to cover $\Theta$ of radius at most $\epsilon$
        is of order $\epsilon^{-a}$ for some $a>0$.  Then  
        \eqref{B3} in (B2) is satisfied because $N(\varepsilon,\mathcal G_{m}, d_{2})\leq \mathfrak{C}\epsilon^{-ab}$.
     \item[(d)] Suppose $\mathcal{G}_m$ is a finite-dimensional vector space and an integrable envelope exists.
        Then, $\mathcal{G}_m^{(ij)}:=\{g_{ij}:g\in\mathcal{G}_m\}$ forms  a finite-dimensional vector space of real
        functions. Then, due to \cite[Lemma II.28 and Lemma II.25]{Pollard:Conv} there exists
        a $w_{ij}>0 $ such that
         $N(\epsilon,\mathcal{G}_m^{(ij)},d_2)\leq \mathfrak{C}\epsilon^{-w_{ij}}$
        for any $\epsilon>0$. But this implies that
        the covering numbers $N(\epsilon,\mathcal{G}_m,d_2)$ behave polynomial which again results in \eqref{B3}.
\end{itemize}
\end{example}
We are able to present the main results of that paper.

\begin{theorem}\label{Satz::intP::Y2}
	Suppose Assumption $A$ and $B$ hold and $\sup_{g\in\mathcal G_m}\|g\|_{\Phi,s}<\infty$.
Then, as $n\to\infty$,
	\begin{align*}
        (\tr(E_n(g)))_{g\in \mathcal G_{m}}
            	\overset{\mathcal D}{\longrightarrow}
        (\tr(E(g)))_{g\in\mathcal G_{m}} \quad \text{in } (\mathcal C (\mathcal G_{m},\C),d_{\mathcal G_{m}}),
\end{align*}
where
\begin{align*}
\tr(E(g))=&\tr\left(W_0'\widehat{g^{\Phi}_0}\right)+\tr\left(\sum_{h=1}^{\infty}W_h[\widehat{g^{\Phi}_h}+{\widehat{g^{\Phi}_{-h}}}^\top] \right)
\end{align*}
and $W_0'$, $W_h$, $h\in\N$, are independent Gaussian random matrices with
\beam \label{def W}
    \begin{array}{rcl}
    \vecc(W_0')&\sim& \mathcal N(0, \E[N^{(\Delta)}_1N^{(\Delta)\top}_1 \otimes N^{(\Delta)}_1 N^{(\Delta)\top}_1]-\Sigma_N^{(\Delta)}\otimes \Sigma_N^{(\Delta)}) \quad \text{ and }\\
    \vecc(W_h)&\sim & \mathcal N(0, \Sigma_N^{(\Delta)}\otimes \Sigma_N^{(\Delta)}), \ h\in \N.
    \end{array}
\eeam
\end{theorem}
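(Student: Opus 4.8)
The plan is to establish the functional central limit theorem in two stages: first prove convergence of the finite-dimensional distributions of $(\tr(E_n(g)))_{g\in\mathcal{G}_m}$, then establish asymptotic tightness (stochastic equicontinuity) of the process on $(\mathcal{G}_m,d_2)$ — or more precisely with respect to a pseudometric controlling the increments — so that, together with total boundedness of $\mathcal{G}_m$, the standard weak-convergence criterion in $\mathcal{C}(\mathcal{G}_m,\mathbb{C})$ applies. The starting point for both stages is the reduction of the periodogram to the driving noise. Using the MA$(\infty)$ representation $Y_k^{(\Delta)}=\sum_{j\geq 0}\Phi_j N_{k-j}^{(\Delta)}$ from \Cref{SSPsampledNat} and the identity $\int_{-\pi}^\pi g(\omega)I_{n,Y}(\omega)\,d\omega=\sum_h \widehat{g}_h\overline{\Gamma}_{n,Y}(-h)$, one rewrites $\tr(E_n(g))$ in terms of the sample second moments of the i.i.d. sequence $(N_k^{(\Delta)})_{k\in\Z}$ weighted by the Fourier coefficients $\widehat{g^\Phi_h}$ of the transformed function $g^\Phi(\omega)=\Phi(e^{i\omega})^\top g(\omega)\Phi(e^{-i\omega})$. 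Concretely, modulo edge effects one expects
\[
\tr(E_n(g))\approx \frac{1}{\sqrt n}\sum_{k=1}^n \tr\!\Big(\widehat{g^\Phi_0}\big(N_k^{(\Delta)}N_k^{(\Delta)\top}-\Sigma_N^{(\Delta)}\big)\Big)
+\frac{1}{\sqrt n}\sum_{h=1}^{n-1}\sum_{k=1}^{n-h}\tr\!\Big(\big[\widehat{g^\Phi_h}+\widehat{g^\Phi_{-h}}^{\!\top}\big]N_{k+h}^{(\Delta)}N_k^{(\Delta)\top}\Big),
\]
the "main part", plus a remainder that must be shown to vanish. The limit matrices $W_0'$ (from the diagonal $k=k'$ quadratic terms, carrying the fourth-moment cumulant of $N_1^{(\Delta)}$) and $W_h$ (from the off-diagonal lag-$h$ cross terms, with covariance $\Sigma_N^{(\Delta)}\otimes\Sigma_N^{(\Delta)}$) arise exactly as in the classical linear-process theory; the independence across $h$ follows from the martingale/orthogonality structure of the cross-products of i.i.d. increments.

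For the finite-dimensional convergence I would apply a CLT for the bilinear forms above — e.g. a martingale CLT after a suitable filtration ordering, or the classical approach of Hannan/Brockwell–Davis for weighted sums of products of i.i.d. terms — controlling the truncation in $h$ by the summability \eqref{PhiBound} of the $\Phi_j$ and by $\sum_h\|\widehat{g^\Phi_h}\|^2<\infty$; the fourth-moment assumption A(a) guarantees the requisite second moments of the summands. The covariance computation giving \eqref{def W} is a direct (if lengthy) moment calculation using that the $N_k^{(\Delta)}$ are i.i.d. mean zero with covariance $\Sigma_N^{(\Delta)}$, and the trace makes the relevant quadratic forms scalar so Parseval-type identities apply cleanly. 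The remainder term — the difference between $\tr(E_n(g))$ and the main part, coming from replacing $\overline{\Gamma}_{n,Y}$ by its noise expansion and from the finite-sample edge corrections — must be shown to be $o_\mathbb{P}(1)$ uniformly in $g$; here the exponential decay of $\Phi_j$ is essential and the bound \eqref{FK_absch} of \Cref{Remark 3.1}(b) controls the relevant Fourier sums.

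The main obstacle is the uniform (in $g\in\mathcal{G}_m$) tightness, and this is precisely where the case split (B1)/(B2)/(B3) of Assumption B enters. Under (B1), with $s>1/2$, the increments $\tr(E_n(g)-E_n(g'))$ are controlled by $\|g-g'\|_{\Phi,s}$ directly: the weight $(1+|h|)^{2s}$ with $s>1/2$ makes $\sum_h\|\widehat{(g-g')^\Phi_h}\|$ finite and summing the per-lag bounds yields a modulus of continuity with a summable bound, so stochastic equicontinuity follows from a maximal-inequality / chaining argument over finitely many $\varepsilon$-balls together with total boundedness — no entropy integral needed. Under (B2), where $s=0$ removes this cushion, one instead needs an entropy-type chaining bound: the summation over $h$ must be handled by a dyadic decomposition in $h$, and controlling the resulting chaining sum requires the stronger cumulant bound on $L_1$ (to get exponential/Bernstein-type tail control on the bilinear forms, hence $\psi_2$- or higher Orlicz-norm increment bounds) combined with the entropy condition \eqref{B3}; this is the heart of the Dahlhaus-type argument and is the delicate part — one has to balance the growth in $h$ against the metric entropy of $\mathcal{G}_m$ so that the chaining integral $\int_0^1[\log N(\varepsilon,\mathcal{G}_m,d_2)]^2\,d\varepsilon$ converges. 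Under (B3), the special indicator-times-smooth-function structure lets one avoid general entropy machinery: one exploits that $t\mapsto h(\cdot)\1_{[0,t]}(\cdot)$ has Fourier coefficients with explicit, well-behaved dependence on $t$ (the jump is a smooth function of $t$), so the process reduces essentially to a one-dimensional partial-sum / Kiefer–Müller-type process whose tightness follows from a standard moment bound on increments over intervals $[s,t]$, using continuous differentiability of $h$ to bound the relevant Fourier-coefficient differences. I expect the (B2) chaining argument, and in particular the passage from the cumulant bound to a usable tail/Orlicz bound for the bilinear statistics uniformly over the truncation levels in $h$, to be the single hardest step.
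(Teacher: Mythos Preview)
Your proposal is correct and structurally matches the paper's proof: the same decomposition $\tr(E_n(g))=\tr(E_{n,N}(g^\Phi))+E_{n,R}(g)$, the same remainder control via the exponential decay of $\Phi_j$ and \eqref{FK_absch}, the same finite-dimensional CLT by truncation in $h$, and the same case split (B1)/(B2)/(B3) for tightness, with (B2) identified as the hardest step via a Dahlhaus-type cumulant-plus-entropy argument.

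The one notable methodological difference is in how tightness is organized. You frame it as stochastic equicontinuity on $(\mathcal G_m,d_2)$ with chaining over $\varepsilon$-balls; the paper instead works in the \emph{target} Banach space $\mathcal C(\mathcal G_N,\C^{N\times N})$ and uses de~Acosta's flat-concentration criterion: it projects $E_{n,N}$ onto the finite-dimensional span of the Fourier-coefficient functionals $e_h(g)=\widehat g_h$, $|h|\le M$, and shows the residual $\sup_g\|\sqrt n\sum_{M<|h|\le n}\widehat g_h\overline\Gamma_{n,N}(h)\|$ is small. This is a genuine simplification under (B1): because the Fourier basis is adapted to the problem, a single Cauchy--Schwarz plus Markov bound, using $\sum_{|h|>M}(1+|h|)^{-2s}\to 0$ for $s>1/2$, handles the tail uniformly in $g$ with no chaining whatsoever---cleaner than the maximal-inequality route you sketch. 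Under (B2) the two frameworks converge: both need the exponential tail from the cumulant bound (the paper verifies Dahlhaus' spectral-cumulant condition for $N^{(\Delta)}$) and both feed it into a chaining integral governed by \eqref{B3}. Under (B3) the paper does not use moment bounds on increments in $t$; it first establishes the indicator case $\mathcal G_N^F=\{\1_{[-\pi,t]}\}$ by the Kl\"uppelberg--Mikosch argument and then passes to $h(\cdot)\1_{[-\pi,t]}$ via partial integration and the continuous mapping theorem---slightly slicker than a direct increment estimate, but your approach would also work.
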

The basic idea is the following. Suppose $$I_{n,N^{(\Delta)}}(\omega)=\frac{1}{2\pi n}\left(\sum_{j=1}^{n}N_j\D e^{-ij\omega}\right)\left(\sum_{k=1}^{n}N_k\D e^{ik\omega}\right)^\top$$ is the periodogram  and $(E_{n,N^{(\Delta)}}(g^\Phi))_{g\in\mathcal{G}_m}$, respectively
 is the empirical spectral process of the
i.i.d. sequence
$(N_k^{(\Delta)})_{k\in\Z}$ such that
\beao
    \tr\left(E_{n,N^{(\Delta)}}(g^\Phi)\right)=\sqrt n \int_{-\pi}^{\pi}\tr\left(g^\Phi(\omega)\left(I_{n,N^{(\Delta)}}(\omega)-\frac{\Sigma_N^{(\Delta)}}{2\pi}\right)\right)\,d\omega.
\eeao
 Since under the trace it is allowed to commute the matrices we obtain
\beao
\tr(E_{n,N^{(\Delta)}}(g^\Phi))=\sqrt n \int_{-\pi}^{\pi}\tr\left(g(\omega)\left(\Phi(e^{-i\omega})I_{n,N^{(\Delta)}}(\omega)\Phi(e^{i\omega})^\top -\f(\omega)\right)\right)\,d\omega
\eeao
and hence, the representation
\beam \label{eq15}
    \tr\left(E_{n}(g)\right)=\tr(E_{n,N^{(\Delta)}}(g^\Phi))+E_{n,R}(g)
\eeam
with
\beao
    E_{n,R}(g):=\sqrt n \int_{-\pi}^{\pi}\tr\left(g(\omega)\left(I_{n,Y^{(\Delta)}}(\omega)-\Phi(e^{-i\omega})I_{n,N^{(\Delta)}}(\omega)\Phi(e^{i\omega})^\top \right)\right)  d\omega, \nonumber
\eeao
holds.
The term $\tr(E_{n,N^{(\Delta)}}(g^\Phi))$ is determining the asymptotic behavior of $\tr\left(E_{n}(g)\right)$, whereas
$E_{n,R}(g)$ is asymptotically neglectable. The details are given in \Cref{sec:Proofs}.

\begin{remark} \label{Remark 3.3a}
Suppose $\mathcal{G}_m\subseteq \mathcal H_m$ and $(\widetilde N_j)_{j\in\Z}$
is a strong white noise with finite fourth moment satisfying Assumption B, where in the cumulant condition (B2) the random vector $BL_1$
is replaced by $\widetilde N_1$.
Define then a discrete time MA process
of the form
\beao
    \widetilde Y_k=\sum_{j=0}^{\infty}\widetilde \Phi_j\widetilde N_{k-j}, \quad k\in\N,
\eeao
where $\sum_{j=1}^\infty j^2\|\widetilde \Phi_j\|<\infty$.
The proof of \Cref{Satz::intP::Y2} shows that the results of \Cref{Satz::intP::Y2} stay true for
this MA process where $\Phi_j$ is replaced by $\widetilde \Phi_j$ and $N_1^{(\Delta)}$ is replaced by
$\widetilde N_1$, respectively.
In particular, every causal multivariate ARMA process driven by a strong white noise
has such a representation. In summary, we have derived as well the asymptotic behavior of the empirical spectral
process of a causal multivariate ARMA model with a strong white noise. 
\end{remark}

\begin{remark} $\mbox{}$ \label{Remark_3_3}
\begin{itemize}
\item[(a)] In the case that the driving Lévy process is a Brownian motion it is possible to weaken the entropy condition in $(B2)$;
for further details see \cite{dahlhaus1988empirical}, Remark~2.6.
\item[(b)] \cite{Mikosch:Norvaisa} derive the asymptotic behavior of the empirical spectral process for  univariate linear processes. Thereby, they mainly assume a finite fourth moment of the white noise and an entropy condition which is stronger 
    than the entropy condition in $(B2)$. However, the proof of Lemma 5.4 in that paper is based on the assertion that the quadratic form $\widetilde Q_n^2(\widetilde Y^2)$  is uniformly bounded which is 	questionable.
\item[(c)]  \cite{dahlhaus2009empirical} present a functional central limit theorem for locally stationary univariate time series under the entropy condition $(B2)$ with the cumulant condition replaced by a moment condition. To be more precise for the one-dimensional white noise $(\xi_k)_{k\in\N}$ they assume
     that $\E|\xi_1|^k\leq  C_{\varepsilon}^k$ for any $k\in\N$ and some constant $C_{\varepsilon}>0$. This implies that the
     $k$-th cumulant $\text{cum}_k(\xi_1)\leq k!\, C_{c}^k$ for any $k\in\N$  and some constant $C_{c}>0$ (\cite{Saulis:Statulevicius}, Lemma 3.1). As the example of the uniform
     distribution shows,  this upper bound is strict and it is not possible to conclude $\text{cum}_k(\xi_1)\leq C_{c}^k$ for any $k\in\N$.
      Therefore, it is not obvious why in the proof of Lemma~5.7 in \cite{dahlhaus2009empirical}, on page 28, the upper bound   holds.
      Thus, we are not using directly Lemma~5.7 of that paper, although it would be much easier for us, and we are using the cumulant condition
      instead of the moment condition. 
\item[(d)] In the case of heavy tailed models, as for $\alpha$-stable ARMA models (cf. \cite{Can::2010}),
the term $W_0'$ vanishes. The reason is that in heavy tailed models the sample variance and the sample autocovariance
function have different convergence rates, and hence, the sample variance has no influence on the limit behavior.
However, in the case of light tailed models this is not the case, and we receive additionally a term depending on the
 fourth cumulant of the white noise.
\end{itemize}
\end{remark}
Next, we derive the autocovariance function of the limit Gaussian process.

\begin{corollary}\label{Satz::intP::Y1}
Let the assumptions of \Cref{Satz::intP::Y2} hold.
Furthermore, suppose that $\widehat{g^{\Phi}_0}={\widehat{g^{\Phi}_0}}{}^\top$ for any $g\in \mathcal{G}_m$.
\begin{itemize}
\item [(a)] Then, $(\tr(E(g)))_{g\in\mathcal G_{m}}$ is a centered Gaussian process with covariance function
\begin{eqnarray*}
\lefteqn{ \Cov(\tr(E(g_1)),\tr(E(g_2)))}\\&=&\pi\int_{-\pi}^\pi \tr\left(\f(\omega) (g_1(\omega)+g_1(-\omega)^\top)^H \f(\omega) (g_2(\omega)+g_2(-\omega)^\top)\right) d\omega \\
			&&+\vecc\left(\frac 1 {2\pi}\int_{-\pi}^\pi \Phi(e^{i\omega})^\top g_1(\omega)\Phi(e^{-i\omega})d\omega\right)^\top  \left(\E[N^{(\Delta)}_1N^{(\Delta)\top}_1 \otimes N^{(\Delta)}_1 N^{(\Delta)\top}_1]\right.\\
			&&\quad\quad\quad\left.-3\Sigma_N^{(\Delta)}\otimes \Sigma_N^{(\Delta)}\right)\vecc\left(\left(\frac 1 {2\pi}\int_{-\pi}^\pi \Phi(e^{i\omega})^\top g_2(\omega)\Phi(e^{-i\omega})d\omega\right)^H\right).
		\end{eqnarray*}
\item[(b)]	The representation
\begin{align*}
\tr(E(g))=&\tr\left(W_0^* \widehat{g^{\Phi}_0}\right)+\tr\left(\sum_{h=-\infty}^{\infty}\frac{W_h}{\sqrt{2}}[\widehat{g^{\Phi}_h}+{\widehat{g^{\Phi}_{-h}}}^\top] \right)
\end{align*}
holds, where $W_0^*$, $W_h$, $h\in\Z$, are independent Gaussian random matrices with
\beam \label{def W}
    \begin{array}{rcl}
    \vecc(W_0^*)&\sim& \mathcal N(0, \E[N^{(\Delta)}_1N^{(\Delta)\top}_1 \otimes N^{(\Delta)}_1 N^{(\Delta)\top}_1]-3\Sigma_N^{(\Delta)}\otimes \Sigma_N^{(\Delta)}) \quad \text{ and }\\
    \vecc(W_h)&\sim & \mathcal N(0, \Sigma_N^{(\Delta)}\otimes \Sigma_N^{(\Delta)}), \ h\in \Z.
    \end{array}
\eeam
\item[(c)] Suppose that the driving Lévy process is a $d$-dimensional Brownian motion. Then, $W_0^*=0_{r\times r}$ a.s. and
the covariance function reduces to
\begin{eqnarray*}\lefteqn{\Cov(\tr(E(g_1)),\tr(E(g_2)))}\\
			&=&\pi\int_{-\pi}^\pi \tr\left(\f(\omega) (g_1(\omega)^\top +g_1(-\omega)) \f(\omega) (g_2(\omega)+g_2(-\omega)^\top)\right) d\omega.
\end{eqnarray*}
\end{itemize}
\end{corollary}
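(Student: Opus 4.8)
The plan is to derive parts~(b), (a) and~(c) in that order, taking the representation of $\tr(E(g))$ from \Cref{Satz::intP::Y2} as the starting point.

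\emph{Part~(b).} It is enough to verify that the right-hand side of the claimed representation, with independent Gaussian matrices $W_0^*$ and $(W_h)_{h\in\Z}$ having the stated covariances, has the same law — as a random element of $(\mathcal C(\mathcal G_m,\C),d_{\mathcal G_m})$ — as the limit process of \Cref{Satz::intP::Y2}. Split the two-sided series into the blocks $h=0$, $h\geq 1$ and $h\leq-1$. By the symmetry hypothesis $\widehat{g^\Phi_0}={\widehat{g^\Phi_0}}{}^\top$ the $h=0$ term equals $\sqrt2\,\tr(W_0\widehat{g^\Phi_0})$, so the total coefficient of $\widehat{g^\Phi_0}$ is $W_0^*+\sqrt2\,W_0$; since $\vecc(W_0^*)$ and $\vecc(W_0)$ are independent with covariances $\E[N^{(\Delta)}_1N^{(\Delta)\top}_1\otimes N^{(\Delta)}_1N^{(\Delta)\top}_1]-3\Sigma_N^{(\Delta)}\otimes\Sigma_N^{(\Delta)}$ and $\Sigma_N^{(\Delta)}\otimes\Sigma_N^{(\Delta)}$, the sum has exactly the covariance of $\vecc(W_0')$. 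For $h\geq1$, combining the $h$- and $(-h)$-terms and using $\tr(AB^\top)=\tr(A^\top B)$ and $\tr(AB)=\tr(A^\top B^\top)$ rewrites the pair as $\tfrac1{\sqrt2}\tr\!\big((W_h+W_{-h}^\top)[\widehat{g^\Phi_h}+{\widehat{g^\Phi_{-h}}}{}^\top]\big)$, and $\vecc\!\big(\tfrac1{\sqrt2}(W_h+W_{-h}^\top)\big)=\tfrac1{\sqrt2}\big(\vecc(W_h)+K\vecc(W_{-h})\big)$, with $K$ the commutation matrix, is $\mathcal N(0,\Sigma_N^{(\Delta)}\otimes\Sigma_N^{(\Delta)})$ because $K(\Sigma_N^{(\Delta)}\otimes\Sigma_N^{(\Delta)})K^\top=\Sigma_N^{(\Delta)}\otimes\Sigma_N^{(\Delta)}$ for symmetric $\Sigma_N^{(\Delta)}$. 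Independence across the different blocks is immediate, so the two representations agree in law.

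\emph{Part~(a).} Given~(b), for each fixed $g$ the series defining $\tr(E(g))$ converges in $L^2$, because $\sum_h\|\widehat{g^\Phi_h}\|^2=\|g\|_{\Phi,0}^2\leq\|g\|_{\Phi,s}^2<\infty$, and it is a limit of sums of independent centered Gaussians, hence centered Gaussian; the same is true of every finite linear combination, so $(\tr(E(g)))_{g\in\mathcal G_m}$ is a centered Gaussian process. By independence of $W_0^*$ from the $W_h$, and of the $W_h$ among themselves, the covariance splits into a $W_0^*$-part and a $W_h$-part. Using $\tr(W_0^*M)=\vecc(M^\top)^\top\vecc(W_0^*)$ together with $\widehat{g^\Phi_0}={\widehat{g^\Phi_0}}{}^\top$, the $W_0^*$-part equals the quadratic-form term displayed in the statement. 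For the $W_h$-part one computes $\Cov(\tr(W_hA),\tr(W_hB))=\tr(A\Sigma_N^{(\Delta)}B^H\Sigma_N^{(\Delta)})$, identifies $\widehat{g^\Phi_h}+{\widehat{g^\Phi_{-h}}}{}^\top$ as the $h$-th Fourier coefficient of $G_j(\omega):=\Phi(e^{i\omega})^\top(g_j(\omega)+g_j(-\omega)^\top)\Phi(e^{-i\omega})$, and applies Parseval's identity for matrix-valued $2\pi$-periodic functions, $\sum_h\tr(\widehat{P_1}_h\widehat{P_2}_h^H)=\tfrac1{2\pi}\int_{-\pi}^\pi\tr(P_1(\omega)P_2(\omega)^H)\,d\omega$, with $P_1=G_1\Sigma_N^{(\Delta)}$ and $P_2=\Sigma_N^{(\Delta)}G_2$. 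Substituting $\Phi(e^{-i\omega})\Sigma_N^{(\Delta)}\Phi(e^{i\omega})^\top=2\pi\f(\omega)$ and using cyclicity of the trace collapses the $W_h$-part to $\pi\int_{-\pi}^\pi\tr\!\big(\f(\omega)(g_1(\omega)+g_1(-\omega)^\top)^H\f(\omega)(g_2(\omega)+g_2(-\omega)^\top)\big)\,d\omega$. Adding the two parts yields the claimed formula; consistency with \Cref{Satz::intP::Y2} is recovered by noting that the $h=0$ term of the two-sided $W_h$-sum contributes, via the symmetry of $\widehat{g^\Phi_0}$, precisely the extra $2\Sigma_N^{(\Delta)}\otimes\Sigma_N^{(\Delta)}$-piece that reconciles the coefficient $-3$ here with the $-1$ appearing through $W_0'$ there.

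\emph{Part~(c).} If $L$ is a $d$-dimensional Brownian motion, then $N^{(\Delta)}_1=\int_0^{\Delta}e^{A(\Delta-u)}B\,dL_u$ is Gaussian, so Isserlis' theorem gives $\E[N^{(\Delta)}_1N^{(\Delta)\top}_1\otimes N^{(\Delta)}_1N^{(\Delta)\top}_1]=3\Sigma_N^{(\Delta)}\otimes\Sigma_N^{(\Delta)}$; hence $\Cov(\vecc(W_0^*))=0$, i.e.\ $W_0^*=0_{N\times N}$ a.s., and the $W_0^*$-term in~(a) drops out, leaving only the $\f$-integral. The one genuinely delicate step, I expect, is the Parseval computation in~(a) — keeping the transposes, adjoints, and the insertions of $\Sigma_N^{(\Delta)}$ and $\f$ consistent, and checking that the $h=0$ contribution of the two-sided sum produces the coefficient $-3$ — whereas the commutation-matrix identity in~(b) and the Gaussian fourth-moment identity in~(c) are routine once the setup is in place.
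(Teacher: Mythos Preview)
Your proof is correct and follows essentially the same route as the paper—Parseval's identity to collapse the $W_h$-covariance sum into the $\f$-integral, the independence structure of the Gaussian blocks, and the Gaussian fourth-moment identity for~(c). The only difference is the order: you establish~(b) first via an explicit distributional matching (the commutation-matrix argument showing $\tfrac{1}{\sqrt2}(W_h+W_{-h}^\top)$ and $W_0^*+\sqrt2\,W_0$ have the right laws) and then read off~(a), whereas the paper computes~(a) directly from the one-sided representation of \Cref{Satz::intP::Y2} and disposes of~(b) by noting that the covariance functions coincide.
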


For a Hilbert space $\mathcal G_m$ we are not only able to
show the convergence in the space $(\mathcal{C}( \mathcal G_m,\C),d_{\mathcal G_m})$ but also
we are able to derive an analog result in the dual space $\mathcal{G}_m^{'}=\{F:\mathcal G_{m}\to\C|\, F\text{ is linear}\}$ with the operator norm
$$\|F\|_{\mathcal G_m'}^{\Phi,s}=\sup_{g\in\mathcal G_m \atop {\|g\|_{\Phi,s}\leq 1}}|F(g)|\qquad \text{ for } F\in\mathcal G_m'.$$
The corresponding metric is denoted by $d_{\mathcal G_m'}^{\Phi,s}$. Of course, for $F\in \mathcal G_m' $
the upper bound $\|F\|_{\mathcal G_m}\leq \|F\|_{\mathcal G_m'}^{\Phi,s}$ holds.
Therefore, it is not surprising that we have weaker assumptions for the convergence in
$(\mathcal G_m',{d}_{\mathcal G_m'}^{\Phi,s})$ than in $(\mathcal{C}( \mathcal G_m,\C),d_{\mathcal G_m})$:
In the dual space $(\mathcal G_m',{d}_{\mathcal G_m'}^{\Phi,s})$, the assumption $\sup_{g\in\mathcal G_m}\|g\|_{\Phi,s}<\infty$ is not required. 

\begin{theorem}\label{Satz::intP::Y}
	Suppose Assumption $A$ and $B$ hold, and $\mathcal G_m$ is a Hilbert space.
    Then, as $n\to\infty$,
	\begin{align*}
        (\tr(E_n(g)))_{g\in \mathcal G_{m}}
            	\overset{\mathcal D}{\longrightarrow}
        (\tr(E(g)))_{g\in\mathcal G_{m}} \quad \text{in } (\mathcal G_{m}',{d}_{\mathcal G_m'}^{\Phi,s}),
        \end{align*}
where $(\tr(E(g)))_{g\in\mathcal G_{m}}$ is defined as in \Cref{Satz::intP::Y2}.
\end{theorem}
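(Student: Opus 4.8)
The plan is to deduce \Cref{Satz::intP::Y} from \Cref{Satz::intP::Y2} together with a finite-dimensional convergence argument and a tightness argument in the dual space, exploiting that on a Hilbert space $\mathcal G_m$ the functionals $\tr(E_n(\cdot))$ and $\tr(E(\cdot))$ are automatically continuous. First I would fix an orthonormal basis $(e_j)_{j\in\N}$ of $\mathcal G_m$ (with respect to the inner product whose norm is $\|\cdot\|_{\Phi,s}$) and observe that, via the decomposition \eqref{eq15}, $\tr(E_n(g))$ is a bounded linear functional of $g$: indeed $\tr(E_{n,N}(g^\Phi))=\sqrt n\sum_h \tr((\widehat{g^\Phi_h})^\top(\overline\Gamma_{n,N}(h)-\Sigma_N^{(\Delta)}\1_{\{h=0\}}/(2\pi)))$ is linear in the Fourier coefficients $\widehat{g^\Phi_h}$, and the map $g\mapsto (\widehat{g^\Phi_h})_h$ is, by \Cref{lemma:aux1} and \eqref{PhiBound}, bounded from $(\mathcal G_m,\|\cdot\|_{\Phi,s})$ into the weighted $\ell^2$-space; the remainder $E_{n,R}(g)$ is handled the same way. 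Thus $\tr(E_n(\cdot))\in\mathcal G_m'$ genuinely, and the coordinates $Z_{n,j}:=\tr(E_n(e_j))$ determine it.

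Next I would establish two ingredients. (i) \emph{Finite-dimensional convergence}: for every finite $J$, $(Z_{n,1},\ldots,Z_{n,J})\xrightarrow{\mathcal D}(Z_1,\ldots,Z_J)$ with $Z_j:=\tr(E(e_j))$. Since each $e_j$ individually lies in $\mathcal G_m$ and satisfies $\|e_j\|_{\Phi,s}<\infty$, this is a special case of \Cref{Satz::intP::Y2} applied to the finite (hence trivially totally bounded, with $\sup\|e_j\|_{\Phi,s}<\infty$) set $\{e_1,\ldots,e_J\}$, combined with the Cramér--Wold device. (ii) \emph{Tightness / asymptotic negligibility of the tail}: I must show that for every $\varepsilon>0$,
\begin{align*}
\lim_{J\to\infty}\limsup_{n\to\infty}\P\Bigl(\sup_{\|g\|_{\Phi,s}\le 1}\bigl|\tr(E_n(g-\Pi_J g))\bigr|>\varepsilon\Bigr)=0,
\end{align*}
where $\Pi_J$ is the orthogonal projection onto $\mathrm{span}(e_1,\ldots,e_J)$. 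Because $\tr(E_n(\cdot))$ is linear and bounded, $\sup_{\|g\|_{\Phi,s}\le1}|\tr(E_n(g-\Pi_Jg))|$ equals the dual norm of $\tr(E_n(\cdot))$ restricted to the tail subspace, which is $\bigl(\sum_{j>J}|Z_{n,j}|^2\bigr)^{1/2}$ up to the boundedness constants from step one; so it suffices to bound $\E\sum_{j>J}|Z_{n,j}|^2$ uniformly in $n$ and send $J\to\infty$. Using the explicit bilinear structure of $E_{n,N}$ and the fourth-moment / cumulant bounds available from Assumption $A$ (exactly the moment computations that already underlie \Cref{Satz::intP::Y2}), one gets $\E|\tr(E_n(g))|^2\le \mathfrak C\|g\|_{\Phi,s}^2$ uniformly in $n$ (for $s\ge 0$; the entropy-type conditions of (B2)/(B3) are not needed here because we are not taking a supremum over an infinite index set inside the probability, only controlling a Hilbert-space norm), hence $\E\sum_{j>J}|Z_{n,j}|^2\le \mathfrak C\sum_{j>J}\|e_j\|_{\Phi,s}^2$... — more carefully, one applies the variance bound to the single element $g=\sum_{j>J}c_j e_j$ with $\sum|c_j|^2\le1$ and takes the supremum, giving a bound that does not shrink; the fix is to note instead that $\tr(E(\cdot))$ itself, as constructed in \Cref{Satz::intP::Y2}, is a.s. a bounded linear functional with $\E\|\tr(E(\cdot))\|_{\mathcal G_m'}^2<\infty$, so its tail $\E\sum_{j>J}|Z_j|^2\to0$, and for $Z_{n,\cdot}$ one transfers this through the uniform $L^2$-bound combined with the finite-dimensional convergence (i) via a standard truncation/Fatou argument.

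Finally, (i) and (ii) together give, by the standard Hilbert-space tightness criterion (e.g. the dual-space analogue of the $C(\mathcal G_m)$ argument used for \Cref{Satz::intP::Y2}, or directly Prokhorov plus the fact that $\{F:\sum_{j>J}|F(e_j)|^2\le\delta_J\}$ is compact in $(\mathcal G_m',d_{\mathcal G_m'}^{\Phi,s})$ for suitable $\delta_J\downarrow0$), that $(Z_{n,\cdot})$ is tight in $\ell^2$, hence $\tr(E_n(\cdot))\xrightarrow{\mathcal D}\tr(E(\cdot))$ in $(\mathcal G_m',d_{\mathcal G_m'}^{\Phi,s})$. The main obstacle I anticipate is step (ii): making the uniform-in-$n$ tail estimate genuinely uniform over the infinite-dimensional unit ball of $(\mathcal G_m,\|\cdot\|_{\Phi,s})$, which is where the weight $(1+|h|)^{2s}$ in $\|\cdot\|_{\Phi,s}$ and the exponential decay \eqref{PhiBound} of the $\Phi_j$ must be combined with the covariance bounds on $\overline\Gamma_{n,N}(h)$ — precisely the place where the proof of \Cref{Satz::intP::Y2} does its work, but here it has to be repackaged as an operator-norm (rather than supremum-over-$\mathcal G_m$) estimate, and one must check that no entropy assumption sneaks back in.
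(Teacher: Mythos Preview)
Your proposal has a genuine gap in step (ii), which you yourself flag but do not close. Working with an abstract orthonormal basis $(e_j)$ of $(\mathcal G_m,\|\cdot\|_{\Phi,s})$, the uniform bound $\E|\tr(E_n(g))|^2\le\mathfrak C\|g\|_{\Phi,s}^2$ only gives $\E|Z_{n,j}|^2\le\mathfrak C$ for each $j$, which says nothing about $\sum_{j>J}|Z_{n,j}|^2$. Your proposed ``fix'' via a Fatou/truncation argument transferring tail negligibility from the limit $Z$ back to $Z_n$ does not work: finite-dimensional convergence plus a uniform coordinate-wise $L^2$ bound does not imply uniform-in-$n$ tail smallness in $\ell^2$ (take $Z_{n,j}=\1_{\{j=n\}}$, $Z_j\equiv0$). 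Your remark that ``the entropy-type conditions of (B2)/(B3) are not needed here because we are not taking a supremum over an infinite index set'' is also incorrect: the dual norm $\|\cdot\|_{\mathcal G_m'}^{\Phi,s}$ \emph{is} a supremum over the infinite-dimensional unit ball of $\mathcal G_m$, and that is exactly where the work lies.

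The paper's proof avoids this difficulty by exploiting the Fourier structure rather than an abstract basis. It first reduces to the white-noise process via $\tr(E_n(g))=\tr(E_{n,N}(g^\Phi))+E_{n,R}(g)$ and shows $\|E_{n,R}\|_{\mathcal G_m'}^{\Phi,s}\to0$ in probability (\Cref{ApproxPerWN}(b)). For $E_{n,N}$ it applies De Acosta's flat-concentration criterion, but with the \emph{specific} finite-dimensional subspaces $L_M\subset{\mathcal G_N^*}'$ spanned by the Fourier-coefficient functionals $e_h(g)=\widehat g_h$, $|h|\le M$. The norm of $\text{Pr}_{L_M^\perp}(E_{n,N})$ is then bounded by $\sup_{\|g\|_{*,s}\le1}\sqrt n\,\bigl\|\sum_{M<|h|\le n}\widehat g_h\,\overline\Gamma_{n,N}(h)\bigr\|$. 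Under $(B1)$, Cauchy--Schwarz together with $n\E\|\overline\Gamma_{n,N}(h)\|^2\le\mathfrak C$ and $\sum_{|h|>M}(1+|h|)^{-2s}\to0$ makes this vanish; under $(B2)$ the cumulant exponential inequality and the entropy condition do the job. The point you are missing is that choosing the approximating subspaces via Fourier truncation, not via an arbitrary ONB of $\mathcal G_m$, turns the tail into an explicit high-frequency remainder that the assumptions $(B1)$/$(B2)$ are tailored to control.
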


\begin{remark}
Note,  $\mathcal{G}_m=\{h(\cdot)\1_{\left[\pi,t\right]}(\cdot):t\in[-\pi,\pi]\}$ in $(B3)$ is not a Hilbert space and hence, this case is not covered in \Cref{Satz::intP::Y}.
\end{remark}

However, a more general result holds without requiring that $\mathcal G_m$ is totally bounded.

\begin{theorem} \label{Theorem:main3}
For some $s>1/2$, define the set $\mathcal H_m^s:=\{g\in \mathcal H_m:\|g\|_{\Phi,s}<\infty\}$. Then,
$$(\tr(E_n(g)))_{g\in \mathcal H_{m}^s}
            	\overset{\mathcal D}{\longrightarrow}
        (\tr(E(g)))_{g\in\mathcal H_{m}^s} \quad \text{in } ({\mathcal H_{m}^s}',{d}_{{\mathcal H_m^s}'}^{\Phi,s}). $$
\end{theorem}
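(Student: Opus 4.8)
The plan is to deduce this theorem from \Cref{Satz::intP::Y}, which already covers the case of a totally bounded Hilbert space. First I would observe that $\mathcal H_m^s$ with the inner product $\langle f,g\rangle_{\Phi,s}:=\sum_{h\in\Z}(1+|h|)^{2s}\tr(\widehat{f^\Phi_h}{}^H\widehat{g^\Phi_h})$ (or rather its natural Hermitian analogue) is itself a Hilbert space, isometrically embeddable into a weighted $\ell^2$ space of matrix sequences via $g\mapsto((1+|h|)^s\widehat{g^\Phi_h})_{h\in\Z}$; by \Cref{lemma:aux1}(a) the seminorm $\|\cdot\|_{\Phi,s}$ is in fact a norm on $\mathcal H_m^s$, since $\|g\|_{\Phi,s}=0$ forces all Fourier coefficients of $g$ to vanish. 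The key point is that convergence in the dual space $(\mathcal G_m',d_{\mathcal G_m'}^{\Phi,s})$ of a \emph{separable} Hilbert space reduces, by the Cramér–Wold/projective-limit principle for cylindrical measures, to (i) convergence of all finite-dimensional distributions $(\tr(E_n(g_1)),\ldots,\tr(E_n(g_k)))$ and (ii) asymptotic tightness, which for a linear process in the dual of a Hilbert space is equivalent to a uniform bound on $\E\|\tr(E_n)\|_{\mathcal G_m'}^2$ of Hilbert–Schmidt type plus a tail condition on the coordinates.

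The concrete steps I would carry out: (1) Fix any countable orthonormal basis $(e_j)_{j\in\N}$ of $\mathcal H_m^s$ and, for $J\in\N$, let $\mathcal G^{(J)}:=\overline{\mathrm{span}}\{e_1,\ldots,e_J\}$ equipped with $\|\cdot\|_{\Phi,s}$; this is a finite-dimensional, hence totally bounded, Hilbert space, so \Cref{Satz::intP::Y} applies and gives $(\tr(E_n(g)))_{g\in\mathcal G^{(J)}}\overset{\mathcal D}{\to}(\tr(E(g)))_{g\in\mathcal G^{(J)}}$ in $(\mathcal G^{(J)\prime},d^{\Phi,s})$ for every $J$. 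Note here Assumption B is automatically verified in case (B1) since $s>1/2$; this is why no cumulant condition and no entropy condition enter, and why $\mathcal H_m^s$ need not be totally bounded. (2) Via the isometry $\iota$ above, reinterpret $\tr(E_n(\cdot))$ and $\tr(E(\cdot))$ as random elements of $\ell^2$ (the dual of the weighted sequence space), with $J$-th coordinate-block projections $\pi_J\circ\tr(E_n)$ converging by step (1). (3) Establish the uniform-tail estimate
\begin{align*}
\limsup_{n\to\infty}\E\Big[\sum_{h:|h|>H}(1+|h|)^{-2s}\big\|\text{(coordinate of }\tr(E_n)\text{ at }h)\big\|^2\Big]\xrightarrow[H\to\infty]{}0,
\end{align*}
using the representation \eqref{eq15}, the bound \eqref{PhiBound}, and the moment bounds on $I_{n,N}$ that already underlie the proof of \Cref{Satz::intP::Y}; together with step (2) this yields asymptotic tightness in $({\mathcal H_m^s}',d^{\Phi,s})$ by the standard finite-approximation criterion (e.g.\ the analogue of Theorem 1.5.2 / Example 1.5.10 in van der Vaart--Wellner for Hilbert-space-valued weak convergence). (4) Combine finite-dimensional convergence (a direct consequence of step (1), restricted to evaluations at fixed $g_1,\ldots,g_k$) with tightness from step (3) to conclude weak convergence in $({\mathcal H_m^s}',d^{\Phi,s})$, and identify the limit as $(\tr(E(g)))_{g\in\mathcal H_m^s}$ since its finite-dimensional laws match those from \Cref{Satz::intP::Y2}.

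The main obstacle I anticipate is step (3): controlling the high-frequency tail of $\tr(E_{n,R}(g))$ \emph{uniformly in $n$} and \emph{uniformly over the unit ball} $\{\|g\|_{\Phi,s}\le 1\}$. Unlike in the totally bounded case, here $g$ ranges over an infinite-dimensional unit ball, so the weight $(1+|h|)^s$ with $s>1/2$ must be used quantitatively to absorb the summation over Fourier modes — this is precisely where the hypothesis $s>1/2$ is consumed, through a Cauchy--Schwarz argument pairing $\sum_h (1+|h|)^{2s}\|\widehat{g^\Phi_h}\|^2 \le 1$ against $\sum_h (1+|h|)^{-2s}\cdot(\text{second moment of the }h\text{-th empirical Fourier coefficient})<\infty$. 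The remainder term $\tr(E_{n,R}(g))$ requires a separate but similar treatment, controlling $\sum_h (1+|h|)^{-2s}\E|\text{contribution of }E_{n,R}\text{ at mode }h|^2$, which should follow from the same fourth-moment estimates and exponential decay of $(\Phi_j)$ that make $E_{n,R}=o_\P(1)$ in the pointwise sense; the one subtlety is that one needs the bound to be summable against the negative-power weight rather than merely finite for each fixed $g$.
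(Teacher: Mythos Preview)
Your overall instinct --- reduce to finite-dimensional convergence plus a tightness/tail estimate driven by the summability $\sum_h (1+|h|)^{-2s}<\infty$ --- is sound, and your identification of the crux (the Cauchy--Schwarz pairing with weights $(1+|h|)^{\pm 2s}$, consuming $s>1/2$) matches exactly how the paper handles the hard step. However, your step (1) contains an error and the route you chose is more circuitous than necessary.

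The error: a finite-dimensional \emph{vector space} is never totally bounded (only its closed unit ball is), so $\mathcal G^{(J)}=\overline{\mathrm{span}}\{e_1,\ldots,e_J\}$ does not satisfy Assumption~B and you cannot invoke \Cref{Satz::intP::Y} on it as written. This is easily patched --- finite-dimensional convergence is available directly from \Cref{fidiCon} without any detour through \Cref{Satz::intP::Y} --- but it signals that you are leaning on the wrong intermediate result.

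The paper's route is more direct. It uses the decomposition $\tr(E_n(g))=\tr(E_{n,N}(g^\Phi))+E_{n,R}(g)$ and observes that the white-noise theorem \Cref{Satz::intP::N} (not \Cref{Satz::intP::Y}) already applies to the Hilbert space $\mathcal G_N:=\{g^\Phi:g\in\mathcal H_m^s\}$ under Assumption~(N1), \emph{with no total-boundedness hypothesis}. So the main term is handled in one stroke; there is no need for a finite-dimensional exhaustion or a separate tightness argument for it. What remains is exactly the obstacle you flagged: showing $\sup_{\|g\|_{\Phi,s}\le 1}\|E_{n,R}(g)\|\overset{\P}{\to}0$. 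The original proof of \Cref{ApproxPerWN} for the term $R_n^{(2)}$ relies on a covering argument (total boundedness), so the paper redoes that case by bounding $\sum_u \|\widehat g_u\|\,\|N^{(\Delta)}_{\cdot}\|$ via Cauchy--Schwarz against $(1+|u|)^{\pm s}$, then using $\sum_u(1+|u|)^{-2s}<\infty$ together with $n^{-1/2}\max_{j\le n}\|N_j^{(\Delta)}\|\overset{\P}{\to}0$ and \eqref{PhiBound}. Your description of step (3) anticipates precisely this computation; you just do not need steps (1)--(2) to set it up.
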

Indeed, the space $\mathcal H_m^s$ is a Hilbert space with scalar product $$\langle g,f \rangle _{\Phi,s}=\sum_{h=-\infty}^\infty (1+|h|)^{2s}{\tr\left(\widehat {g_h^\Phi}\widehat {f_h^{\Phi}}^\top \right)}\text{ for } g,f \in \mathcal H_m^s.$$

\begin{example}\label{remark1}~
    Since the autocovariance functions of state space models are exponential decreasing, \Cref{lemma:aux1}(b)  implies  that the spectral densities
    of discretely observed $m$-dimensional state space models are in $\mathcal H_m^s$ and in particular, the spectral densities of
    $m$-dimensional ARMA processes are in $\mathcal H_m^s$.
    Similarly, the inverses of these spectral densities are in $\mathcal H_m^s$  since the spectral densities
    are rational matrix functions and hence, the inverses are rational matrix functions as well with exponential decreasing
    Fourier coefficients. The process $Y^{(\Delta)}$ is an $m$-dimensional ARMA process.
\end{example}

\begin{example} \label{Whittle}
The Whittle estimator is a well-known parameter estimator in the frequency domain  going back to
\cite{whittle1953estimation},
 and is well investigated for different time series models in discrete
time. The Whittle estimator
 $\widehat{\vartheta}_n^{(\Delta)}:= \arg \min_{\vartheta\in \Theta} W_n(\vartheta)$
is the minimizer of the Whittle function
 $$W_n(\vartheta)=\frac{1}{2\pi}\int_{-\pi}^\pi\Big[\tr\left(f_Y^{(\Delta)}(\omega, \vartheta)^{-1}I_n(\omega)\right)+\log\left(\det \left(f_Y^{(\Delta)}(\omega,\vartheta)\right)\right)\Big], \quad  \vartheta\in\Theta,$$
    where $f_Y^{(\Delta)}(\omega, \vartheta)$ is a spectral density for any parameter $\vartheta$ in the a parameter space $\Theta$.
 Defining
 $$W(\vartheta)=\frac{1}{2\pi}\int_{-\pi}^\pi\Big[\tr\left(f_Y^{(\Delta)}(\omega, \vartheta)^{-1}f_Y^{(\Delta)}(\omega)\right)+\log\left(\det \left(f_Y^{(\Delta)}(\omega,\vartheta)\right)\right)\Big], \quad  \vartheta\in\Theta,$$
 we receive due to \Cref{Theorem:main3} and \Cref{remark1}
 under some mild assumptions
 \beao
    \sup_{\vartheta\in\Theta}|W_n(\vartheta)-W(\vartheta)|=n^{-1/2}\sup_{\vartheta\in\Theta}|E_n(f_Y^{(\Delta)}(\omega, \vartheta)^{-1}/2\pi)|=o_p(1).
 \eeao
Further, it is also possible to derive the asymptotic normality of the Whittle estimator using the asymptotic behavior of the empirical
spectral process. The  basic ideas of such an approach
are given in \cite{Bardet:Doukhan:Leon}, \cite{dahlhaus1988empirical} and \cite{Dahlhaus:Poloik:2002}.
\cite{Fasen:Mayer:2020} prove that the Whittle estimator for state space models with finite fourth moment is a
consistent and asymptotically normally distributed estimator without using the empirical spectral process.
However, their Whittle function is defined
by a sum which approximates the above integral.
\end{example}

\section{Goodness-of-fit tests} \label{sec:goodnessoffittest}

\subsection{Theory}

In this section, we investigate the behavior of some goodness-of-fit test statistics which are based on the empirical spectral distribution function.

\begin{theorem}\label{GOF-stat}~\\
	Let Assumption $A$ hold and let $W_0^*$, $W_h$, $h\in\Z$, be independent Gaussian random matrices as defined
in \eqref{def W} and $(B_t)_{t\in[0,1]}$ be a one-dimensional Brownian motion.
Then, the following statements hold:
	\begin{itemize}
		\item[(a)]The Grenander-Rosenblatt statistic satisfies as $n\to\infty$,
\begin{eqnarray*}
\lefteqn{\sqrt n\sup_{t\in [0,\pi]}\left|\tr\left(\int_{0}^t I_{n,Y^{(\Delta)}}(\omega)-\frac 1 {2\pi} \Phi(e^{-i\omega})\Sigma_N^{(\Delta)}\Phi(e^{i\omega})^\top d\omega \right)\right|} \\
			&&\overset{\mathcal D}{\longrightarrow}\sup_{t\in [0,\pi]}\left|\tr\left(\frac{W_0^*}{2\pi}\int_{0}^t \Phi(e^{i\omega})^\top  \Phi(e^{-i\omega})d\omega\right)\right.
			\\&& \quad\quad \left.+\tr\left(\sum_{h=-\infty}^{\infty}\frac{W_h}{2\sqrt{2}\pi}\left(\int_{-t}^{t}\Phi(e^{i\omega})^\top \Phi(e^{-i\omega})e^{-ih\omega}d\omega \right)\right)\right|.
		\end{eqnarray*}
	If the driving Lévy process is a Brownian motion the limit process  reduces to $$ \sup_{t\in [0,\pi]}\left|\tr\left(\sum_{h=-\infty}^{\infty}\frac{ W_h}{2\sqrt{2}\pi}\left(\int_{-t}^{t}\Phi(e^{i\omega})^\top \Phi(e^{-i\omega})e^{-ih\omega}d\omega 
\right)\right)\right|.$$
		\item[(b)] The Cramér-von Mises statistic satisfies as $n\to\infty$,
\begin{eqnarray*}\lefteqn{ n \int_{0}^\pi \left[\tr\left(\int_{0}^t I_{n,Y^{(\Delta)}}(\omega)-\frac 1 {2\pi} \Phi(e^{-i\omega})\Sigma_N^{(\Delta)}\Phi(e^{i\omega})^\top d\omega \right)\right]^2dt} \\
			&&\overset{\mathcal D}{\longrightarrow}\int_{0}^\pi\left[\tr\left(\frac{W_0^*}{2\pi}\int_{0}^t \Phi(e^{i\omega})^\top  \Phi(e^{-i\omega})d\omega\right.\right.
			\\&&\quad\quad\left.\left. +\sum_{h=-\infty}^{\infty}\frac{W_h}{2\sqrt{2}\pi}\left(\int_{-t}^{t}\Phi(e^{i\omega})^\top \Phi(e^{-i\omega})e^{-ih\omega}d\omega
\right)\right)\right]^2dt.
		\end{eqnarray*}
		If the driving Lévy process is a Brownian motion the limit process  reduces to  $$ \int_{0}^\pi\left[\tr\left(\sum_{h=-\infty}^{\infty}\frac{ W_h}{2\sqrt{2}\pi}\left(\int_{-t}^{t}\Phi(e^{i\omega})^\top \Phi(e^{-i\omega})e^{-ih\omega}d\omega 
\right)\right)\right]^2dt.$$
				\item[(c)] The self-normalized  Grenander-Rosenblatt statistic
            satisfies as $n\to\infty$,
			\begin{eqnarray*}
            \lefteqn{\frac{\sqrt n}{\sqrt{m}}\sup_{t\in [0,\pi]}\left|\int_{0}^t\tr\left( I_{n,Y^{(\Delta)}}(\omega)f^{(\Delta)}_Y(\omega)^{-1}\right)d\omega -t m\right|} \\
					&&\overset{\mathcal D}{\longrightarrow}
        \sup_{t\in [0,\pi]}\left|\tr\left(\frac{W_0^*}{2\pi\sqrt{m}}\int_{-t}^t \Phi(e^{i\omega})^\top f_Y^{(\Delta)}(\omega)^{-1} \Phi(e^{-i\omega})d\omega\right) +\sqrt{2}\pi B_{\frac{t}{\pi}}\right|.
				\end{eqnarray*}
				If the driving Lévy process is a Brownian motion then the limit distribution is equal to \linebreak
                $\sqrt{2}\pi\sup_{t\in [0,1]}|B_t|,$
                where $\sup_{t\in [0,1]}|B_t|$ has distribution function
            \begin{align} \label{distributionF}
                    F(x)=\sum_{k=-\infty}^\infty(-1)^k\left[\Phi((2k+1)x)-\Phi((2k-1)x)\right], \quad x\geq 0,
            \end{align}
                see \cite{billingsley}, Equation (9.14).
		\item[(d)] The self-normalized Cramér-von Mises statistic satisfies as $n\to\infty$,
		\begin{eqnarray*}\lefteqn{ \frac{n}{m} \int_{0}^\pi \left[\int_{0}^t \tr\left(I_{n,Y^{(\Delta)}}(\omega)f^{(\Delta)}_Y(\omega)^{-1}\right)d\omega -t m \right]^2dt} \\
			&&\overset{\mathcal D}{\longrightarrow}\int_{0}^\pi\left[\tr\left(\frac{W_0^*}{2\pi\sqrt{m}}
            \int_{0}^t \Phi(e^{i\omega})^\top f_Y^{(\Delta)}(\omega)^{-1} \Phi(e^{-i\omega})d\omega\right) +\sqrt{2}\pi B_{\frac{t}{\pi}}\right]^2dt.
		\end{eqnarray*}
		If the driving Lévy  process is a Brownian motion the limit distribution reduces to
        $2\pi^3\int_{0}^1 B_{t}^2 dt$.
		\end{itemize}
\end{theorem}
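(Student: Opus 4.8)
\emph{Proof plan.} The point is that all four statistics are, after pulling the factor $\sqrt n$ out of $E_n$, continuous functionals of the empirical spectral process indexed by one of two concrete function spaces, so the statement reduces to \Cref{Satz::intP::Y2} together with the continuous mapping theorem. Put $\mathcal G_m^F:=\{I_m\1_{[0,t]}(\cdot):t\in[0,\pi]\}$ and $\mathcal G_m^S:=\{\f(\cdot)^{-1}\1_{[0,t]}(\cdot):t\in[0,\pi]\}$. Since $\f(\omega)=\frac1{2\pi}\Phi(e^{-i\omega})\Sigma_N\D\Phi(e^{i\omega})^\top$, one has $\tr(E_n(I_m\1_{[0,t]}))=\sqrt n\,\tr\big(\int_0^t(I_{n,Y}(\omega)-\f(\omega))\,d\omega\big)$, and by $\tr((I_{n,Y}(\omega)-\f(\omega))\f(\omega)^{-1})=\tr(I_{n,Y}(\omega)\f(\omega)^{-1})-m$ together with the cyclicity of the trace, $\tr(E_n(\f(\cdot)^{-1}\1_{[0,t]}))=\sqrt n\big(\int_0^t\tr(I_{n,Y}(\omega)\f(\omega)^{-1})\,d\omega-tm\big)$. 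Hence the statistics in (a)/(b) are $\sup_{t\in[0,\pi]}|\tr(E_n(g))|$ and $\int_0^\pi[\tr(E_n(g))]^2\,dt$ for $g$ ranging over $\mathcal G_m^F$, and those in (c)/(d) are the same with $\mathcal G_m^S$. Both $\mathcal G_m^F$ and $\mathcal G_m^S$ are of the form occurring in condition (B3) of Assumption $B$ — take $h=I_m$, respectively $h=\f(\cdot)^{-1}$, the latter being a rational matrix function which lies in $\mathcal H_m$ and is continuously differentiable because $\f$ is a rational matrix function that is nonsingular on $[-\pi,\pi]$ (as is implicit in writing the self-normalized statistics). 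By the examples following Assumption $B$ such spaces are totally bounded and satisfy $\sup_{g\in\mathcal G_m}\|g\|_{\Phi,0}<\infty$, so Assumption $B$ (with $s=0$, via (B3)) is automatically satisfied; hence \Cref{Satz::intP::Y2} applies and $(\tr(E_n(g)))_{g\in\mathcal G_m}\overset{\mathcal D}{\longrightarrow}(\tr(E(g)))_{g\in\mathcal G_m}$ in $(\mathcal C(\mathcal G_m,\C),d_{\mathcal G_m})$ for $\mathcal G_m\in\{\mathcal G_m^F,\mathcal G_m^S\}$.

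Next I would apply the continuous mapping theorem. For $h\in\mathcal H_m$ the map $t\mapsto h\1_{[0,t]}$ is $d_2$-continuous, since $\|h\1_{[0,t]}-h\1_{[0,s]}\|_2^2=\frac1{2\pi}\int_s^t\|h(\omega)\|^2\,d\omega$; hence for $F\in\mathcal C(\mathcal G_m,\C)$ the function $t\mapsto F(h\1_{[0,t]})$ is continuous on $[0,\pi]$, $\sup_{g\in\mathcal G_m}|F(g)|=\sup_{t\in[0,\pi]}|F(h\1_{[0,t]})|$, and both $F\mapsto\sup_{g\in\mathcal G_m}|F(g)|=\|F\|_{\mathcal G_m}$ and $F\mapsto\int_0^\pi|F(h\1_{[0,t]})|^2\,dt$ are continuous on $\mathcal C(\mathcal G_m,\C)$ (for the second, $\big|\int_0^\pi|F|^2-\int_0^\pi|G|^2\big|\le\pi\|F-G\|_{\mathcal G_m}(\|F\|_{\mathcal G_m}+\|G\|_{\mathcal G_m})$). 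Since $I_{n,Y}(\omega)$ and $\f(\omega)$ are Hermitian, the relevant integrands are traces of products of Hermitian matrices and hence real, so the continuous mapping theorem yields the weak convergence of all four statistics to $\sup_{t\in[0,\pi]}|\tr(E(h\1_{[0,t]}))|$, respectively $\int_0^\pi[\tr(E(h\1_{[0,t]}))]^2\,dt$, with $h=I_m$ for (a)/(b) and $h=\f(\cdot)^{-1}$ for (c)/(d); these limits have continuous paths in $t$ because they lie in $\mathcal C(\mathcal G_m,\C)$.

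To make the limits explicit I would note that for $g=h\1_{[0,t]}$ the function $g^\Phi(\omega)=\Phi(e^{i\omega})^\top h(\omega)\Phi(e^{-i\omega})\1_{[0,t]}(\omega)$ has, after a change of variables $\omega\mapsto-\omega$, symmetrized Fourier coefficients $\widehat{g^\Phi_\ell}+\widehat{g^\Phi_{-\ell}}^\top=\frac1{2\pi}\int_{-t}^t\Phi(e^{i\omega})^\top h(\omega)\Phi(e^{-i\omega})e^{-i\ell\omega}\,d\omega$; inserting these into the representation of $\tr(E(g))$ from \Cref{Satz::intP::Y2} (equivalently \Cref{Satz::intP::Y1}, using that $W_0'$ and $W_0^*$ are a.s.\ symmetric so that only the symmetric part of $\widehat{g^\Phi_0}$ enters) produces the displayed limit processes. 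When the driving Lévy process is a Brownian motion, \Cref{Satz::intP::Y1}(c) gives $W_0^*=0$ a.s., which removes the $W_0^*$-term in all four limits.

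The only genuinely non-routine step is the Brownian-motion case of (c)/(d), where the surviving $W_h$-series must be identified with the time-scaled Brownian motion $\sqrt2\,\pi\,B_{t/\pi}$ on $[0,\pi]$. The process $t\mapsto\frac1{\sqrt m}\tr(E(\f(\cdot)^{-1}\1_{[0,t]}))$ is a centered Gaussian process (an a.s.\ convergent series of independent centered Gaussians), vanishes at $t=0$, and has continuous paths; its increments over disjoint $t$-intervals correspond, via Parseval, to Fourier coefficients supported on disjoint symmetric frequency bands and are therefore uncorrelated, so the process has independent increments. Using $(\Sigma_N\D\otimes\Sigma_N\D)\vecc(M)=\vecc(\Sigma_N\D M\Sigma_N\D)$ its variance at time $t$ reduces to a constant multiple of $\int_{-t}^t\tr\big((K(\omega)\Sigma_N\D)^2\big)\,d\omega$ with $K(\omega)=\Phi(e^{i\omega})^\top\f(\omega)^{-1}\Phi(e^{-i\omega})$; and since $2\pi\f(\omega)=\Phi(e^{-i\omega})\Sigma_N\D\Phi(e^{-i\omega})^H$, the matrix $\frac1{2\pi}K(\omega)\Sigma_N\D=\Phi(e^{-i\omega})^H(\Phi(e^{-i\omega})\Sigma_N\D\Phi(e^{-i\omega})^H)^{-1}\Phi(e^{-i\omega})\Sigma_N\D$ is idempotent of rank $m$, so $\tr((K(\omega)\Sigma_N\D)^2)=2\pi\tr(K(\omega)\Sigma_N\D)=4\pi^2m$ and the variance equals $2\pi t$. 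Hence the limit is indeed $\sqrt2\,\pi\,B_{t/\pi}$, and the laws of $\sqrt2\,\pi\sup_{t\in[0,1]}|B_t|$ and $2\pi^3\int_0^1B_t^2\,dt$ are the classical ones (see \cite{billingsley}). I expect this identification — and the bookkeeping of the normalizing constants and of the $\int_0^t$ versus $\int_{-t}^t$ integration ranges that accompanies it — to be the main point requiring care, everything else being a direct application of \Cref{Satz::intP::Y2}.
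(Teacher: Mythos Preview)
Your proposal is correct and follows the same overall route as the paper: for (a)/(b) take $\mathcal G_m=\{I_m\1_{[0,t]}:t\in[0,\pi]\}$, for (c)/(d) take $\mathcal G_m=\{\f(\cdot)^{-1}\1_{[0,t]}:t\in[0,\pi]\}$, invoke condition (B3), apply \Cref{Satz::intP::Y2}, and finish with the continuous mapping theorem. The explicit form of the limits via the symmetrized Fourier coefficients and the disappearance of $W_0^*$ in the Brownian case via \Cref{Satz::intP::Y1}(c) are handled as in the paper.

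The one place where you take a different path is the identification of the surviving $W_h$-series in (c)/(d) with $\sqrt2\,\pi B_{t/\pi}$. The paper simply invokes the covariance formula of \Cref{Satz::intP::Y1}(c): with $g_j=\f^{-1}\1_{[0,t_j]}$ one has $g_j(\omega)+g_j(-\omega)^\top=\f(\omega)^{-1}\1_{[-t_j,t_j]}(\omega)$ (using $\f(-\omega)=\f(\omega)^\top$), so the covariance is $\pi\int_{-\min(t_1,t_2)}^{\min(t_1,t_2)}\tr(\f\f^{-1}\f\f^{-1})\,d\omega=2\pi m\min(t_1,t_2)$, which after dividing by $m$ gives the Brownian covariance directly. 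Your route---independent increments from disjoint frequency support plus a variance computation via the idempotent $\tfrac{1}{2\pi}K(\omega)\Sigma_N^{(\Delta)}$---is also valid but more laborious; note that your phrase ``Fourier coefficients supported on disjoint symmetric frequency bands'' is slightly misleading, since it is the functions $g^\Phi$ themselves (not their Fourier coefficients $\widehat{g^\Phi_h}$) that have disjoint support in $\omega$, and it is this, fed through the covariance formula of \Cref{Satz::intP::Y1}(c), that yields uncorrelated increments. The paper's one-line covariance computation avoids both the separate independence argument and the idempotent-matrix bookkeeping.
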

\begin{proof} $\mbox{}$
(a) and (b): Define the set $\mathcal{G}_m:=\{Id_m\1_{[0,t]}(\cdot): t\in[0,\pi]\}$.
Due to  condition $(B3)$ we are allowed to apply
\Cref{Satz::intP::Y2} such that an application of the continuous mapping theorem results  in
the statements (a) and (b), respectively. \\
(c) and (d): 
Similar arguments as in (a) and (b) with $\mathcal{G}_m:=\{\f(\omega)^{-1}\1_{[0,t]}(\cdot): t\in[0,\pi]\}$ give the statement.
The Brownian motion is popping up because due to  \Cref{Satz::intP::Y1} the covariance function
of the stochastic process $$\left(\frac{1}{\sqrt{m}}\tr\left(\sum_{h=-\infty}^{\infty} \frac{W_h}{2\sqrt 2 \pi}\int_{-t}^{t}
    \Phi(e^{i\omega})^\top f_Y^{(\Delta)}(\omega)^{-1} \Phi(e^{-i\omega}) e^{i\omega h}d\omega\right)\right)_{t\in [0,\pi]}$$  is
    equal to $2\pi\min\{s,t\}$ for $s,t\in[0,\pi]$.
\end{proof}
\begin{remark}~
In the case of one-dimensional ARMA processes several spectral goodness-of-fit test statistics were already investigated; see Section 6.2.6  of \cite{Priestley:1981}. The limit distribution of the Grenander-Rosenblatt statistic for ARMA processes with normally distributed white noise is the same as ours (see \cite{Grenander:Rosenblatt}).
Indeed, \cite{Anderson:93} uses for linear processes with finite second moments
the sample standardized periodogram to estimate the standardized spectral distribution function and obtains as limit the Brownian bridge if the noise is Gaussian.
 In contrast, the convergence rate and the limit distribution for $\alpha$-stable ARMA processes differ,
the convergence rate is faster. 
Instead of the Brownian motion $(B_t)_{t\geq 0}$ an analogue to the Brownian bridge for stable models occurs;
for more details see \cite{Kluppelberg:1996}, Section 4 .
\end{remark}

\subsection{Simulations}

The simulation study has two major purposes. Firstly, we want to find out if the theoretical results can be observed for finite sample sizes. 
Therefore, we investigate the behavior of the empirical and limit quantiles of the spectral goodness-of-fit test statistics.
Subsequently, we use the quantiles of the limit  process to construct some tests. These tests will be applied in different scenarios.

In the following, we focus on the self-normalized versions of the Grenander-Rosenblatt and the Cram\'er-von Mises statistic, see \Cref{GOF-stat} (c) and (d). We start with investigating their performances in the case of a univariate CARMA(2,1) process 
with  \begin{align*}A&=\left( \begin{array}{r r} 0 & 1  \\ -1&-1 \end{array}\right),\quad \quad B=\left( \begin{array}{r}  -1\\ 0\end{array}\right),
\qquad C=\left( \begin{array}{r r r}1 & 0 \end{array}\right),\quad \quad  \quad \quad \Sigma_{L}=1,\end{align*} a bivariate Ornstein-Uhlenbeck process (MCAR(1) process) with  \begin{align*}A&=\left( \begin{array}{r r} -1/2 & -1/2  \\ 1&-1 \end{array}\right)= B,
\qquad C=Id_2=\Sigma_{L},\end{align*} and a bivariate CARMA(2,1) process with  \begin{align*}A&=\left( \begin{array}{r r r }-1 & 4&0  \\0&0&1\\-1&0&-3  \end{array}\right) , \qquad B=\left( \begin{array}{r r }-1 & 4  \\-1&-1\\2&3  \end{array}\right), \quad
C=\left( \begin{array}{r r r}1 & 0& 0 \\0&1&0  \end{array}\right), \quad \Sigma_L=Id_2 .\end{align*}
In all settings, the processes are simulated with an Euler-Maruyama scheme  with initial values $Y(0)=X(0)=0$, step size 0.01 and observation distance $\Delta=1$. We take the Brownian motion and the normal-inverse Gaussian process (NIG L\'evy process) as driving process. The NIG L\'evy process is often used in financial applications as in the modeling of stock returns or stochastic volatility, see \cite{barndorff1997normal}.
The estimation results for finite sample sizes are based on 5000 replicates, whereas the estimation results
 corresponding to the limit processes are based on $50.000$ replicates with the infinite series replaced by a sum consisting of $250$ terms. Note that the quantiles for the limit of the Brownian motion driven self-normalized Grenander-Rosenblatt statistic are explicitly known and therefore, have not to be estimated, see \eqref{distributionF}. For each setting, we derive the empirical $\alpha$-quantiles for $\alpha=0.9,0.95,0.975$ and $\alpha=0.99$. The results of the univariate CARMA(2,1) model are presented in \Cref{tab1_IP}, those of the bivariate Ornstein-Uhlenbeck model
in \Cref{tab2_IP} and those of the MCARMA(2,1) model in \Cref{tab3_IP}, respectively. The quantiles of the limit process in the NIG driven model differ from the associated ones in the Brownian motion driven model due to the additional term corresponding to $W_0^*$. However, that term is comparatively small so that the difference is unremarkable. 

%

\begin{table}[h!]
	\begin{center}
		\begin{tabular}{|c||c|c|c|c||c|c|c|c|}\hline
			\multicolumn{9}{|c|}{CARMA(2,1) process} \\\hline
			\multicolumn{9}{|c|}{self-normalized Grenander-Rosenblatt statistic} \\\hline
			&  \multicolumn{4}{|c||}{Gaussian distribution} & \multicolumn{4}{|c|}{NIG distribution}\\ \hline
			\hspace*{0.1cm} $n$ \hspace*{0.1cm} &90$\%$ & 95$\%$ & 97.5$\%$ & 99$\%$ &90$\%$ & 95$\%$ & 97.5$\%$ & 99$\%$    \\ \hline
			50&8.1129&9.4748&10.6459&12.2510&8.1277&9.5091&10.7964&12.6573 \\
			100&8.2701&9.6119&10.8839&11.9292&8.2646&9.6339&10.9753&12.5016 \\
			200&8.3499&9.6181&10.8841&12.4037&8.5780&9.8687&11.2801&12.4481\\
			500&8.5385&9.8123&10.9318&12.3058&8.5462&9.9177&11.1745&12.7717\\
			1000&8.6051&9.9091&11.1834&12.4889&8.5672&9.8623&11.0338&12.4965 \\
			
			limit&8.7067&9.9583&11.0970&12.4712&8.5691&9.8042&10.9474&12.2893\\  \hline \hline

			\multicolumn{9}{|c|}{self-normalized Cram\'er-von Mises statistic} \\\hline
			&  \multicolumn{4}{|c||}{Gaussian distribution} & \multicolumn{4}{|c|}{NIG distribution}\\ \hline
			\hspace*{0.1cm} $n$ \hspace*{0.1cm} &90$\%$ & 95$\%$ & 97.5$\%$ & 99$\%$ &90$\%$ & 95$\%$ & 97.5$\%$ & 99$\%$    \\ \hline
			50&72.2949&102.2965&130.2198&178.2448&72.5537&102.2986&137.1271&197.4643 \\
			100&71.0994&99.5237&132.2028&172.0599&74.4093&102.6841&132.2682&183.0207 \\
			200&73.8805&100.7941&130.0039&184.5601&75.3582&106.9690&138.9153&182.5730\\
			500&75.4037&105.8723&135.9596&176.5912&75.6696&107.4858&142.3017&178.4640\\
			1000&74.7563&104.4687&137.1544&172.4098&75.8411&105.2057&133.9230&176.4671 \\
			limit&73.6655&102.3420&131.5447&173.5202&75.2386&103.1234&132.1666&170.7825\\ \hline
			
		\end{tabular}
	\end{center}
	\begin{center}
		\caption{\label{tab1_IP}Empirical quantiles of the self-normalized Grenander-Rosenblatt and the self-normalized Cram\'er-von Mises statistic for the CARMA$(2,1)$ process. The estimated quantiles of the limit random variable are denoted as ``limit''. }
	\end{center}
	
\end{table}

\begin{table}[h!]
	\begin{center}
		\begin{tabular}{|c||c|c|c|c||c|c|c|c|}\hline
			\multicolumn{9}{|c|}{MCAR(1) process} \\\hline
			\multicolumn{9}{|c|}{self-normalized Grenander-Rosenblatt statistic} \\\hline
			&  \multicolumn{4}{|c||}{Gaussian distribution} & \multicolumn{4}{|c|}{NIG distribution}\\ \hline
			\hspace*{0.1cm} $n$ \hspace*{0.1cm} &90$\%$ & 95$\%$ & 97.5$\%$ & 99$\%$ &90$\%$ & 95$\%$ & 97.5$\%$ & 99$\%$    \\ \hline
			50&7.8557&9.1691&10.2976&11.5452&8.1736&9.4011&10.4213&11.8696 \\
			100&8.0828&9.2861&10.4627&11.9029&8.3682&9.6405&11.0312&12.3705 \\
			200&8.3262&9.6419&10.7303&12.2339&8.5911&9.9998&11.0681&12.6319\\
			500&8.5320&9.7029&10.8048&12.3653&8.6817&9.9822&11.3216&12.8713\\
			1000&8.6925&9.9823&10.9998&12.4769&8.7211&10.0636&11.3817&13.0281 \\
			
			limit&8.7067&9.9583&11.0970&12.4712&8.5754&9.8232&10.9469&12.3400\\  \hline \hline
			
			\multicolumn{9}{|c|}{self-normalized Cram\'er-von Mises statistic} \\\hline
			&  \multicolumn{4}{|c||}{Gaussian distribution} & \multicolumn{4}{|c|}{NIG distribution}\\ \hline
			\hspace*{0.1cm} $n$ \hspace*{0.1cm} &90$\%$ & 95$\%$ & 97.5$\%$ & 99$\%$ &90$\%$ & 95$\%$ & 97.5$\%$ & 99$\%$    \\ \hline
			50&69.5715&95.6165&124.5750&173.7440&71.9138&100.7472&130.5848&173.6131 \\
			100&71.7997&96.6962&125.7060&162.2857&75.1489&103.1489&135.8280&176.5584 \\
			200&72.6525&98.4600&130.0324&183.1381&78.5692&198.0443&141.3767&182.6919\\
			500&72.2218&102.6097&129.2060&171.7070&77.3160&105.7352&135.6170&180.1722\\
			1000&73.0592&101.7156&128.2745&163.3452&77.7714&106.5174&141.9770&185.5810 \\
			limit&73.8565&101.6551&130.6510&169.8042&74.3543&103.2249&132.9364&175.0584\\ \hline
			
		\end{tabular}
	\end{center}
	\begin{center}
		\caption{\label{tab2_IP}Empirical quantiles of the self-normalized Grenander-Rosenblatt and the self-normalized Cram\'er-von Mises statistic for the bivariate Ornstein-Uhlenbeck process. The estimated quantiles of the limit random variable are denoted as ``limit''. }
	\end{center}
	
\end{table}

\begin{table}[h!]
	\begin{center}
		\begin{tabular}{|c||c|c|c|c||c|c|c|c|}\hline
			\multicolumn{9}{|c|}{MCARMA(2,1) process} \\\hline
			\multicolumn{9}{|c|}{self-normalized Grenander-Rosenblatt statistic} \\\hline
			&  \multicolumn{4}{|c||}{Gaussian distribution} & \multicolumn{4}{|c|}{NIG distribution}\\ \hline
			\hspace*{0.1cm} $n$ \hspace*{0.1cm} &90$\%$ & 95$\%$ & 97.5$\%$ & 99$\%$ &90$\%$ & 95$\%$ & 97.5$\%$ & 99$\%$    \\ \hline
			50&8.2986&9.8566&11.2118&13.0063&8.5621&10.0598&11.5656&13.5447 \\
			100&8.4605&9.8910&11.0340&12.8121&8.4695&9.8199&10.9496&12.2852 \\
			200&8.6303&9.9647&11.2043&12.5652&8.5999&9.9500&11.1117&12.5176\\
			500&8.7450&9.9134&11.0104&12.4949&8.9067&10.2158&11.5731&13.2769\\
			1000&8.9541&10.3739&11.1723&12.5547&9.1116&10.4629&11.7347&13.1735 \\
			
			limit&8.7067&9.9583&11.0970&12.4712&8.5439&9.81128&10.8763&12.4449\\ \hline \hline
			
			\multicolumn{9}{|c|}{self-normalized Cram\'er-von Mises statistic} \\\hline
			&  \multicolumn{4}{|c||}{Gaussian distribution} & \multicolumn{4}{|c|}{NIG distribution}\\ \hline
			\hspace*{0.1cm} $n$ \hspace*{0.1cm} &90$\%$ & 95$\%$ & 97.5$\%$ & 99$\%$ &90$\%$ & 95$\%$ & 97.5$\%$ & 99$\%$    \\ \hline
			50&75.9319&108.3164&143.3946&209.7028&82.4033&115.1073&155.3495&225.8812 \\
			100&76.5426&105.9869&140.2673&188.0803&76.7595&105.1064&135.9159&182.0074 \\
			200&73.7611&102.1533&133.8212&177.4080&78.2793&110.1741&140.2286&184.8944\\
			500&79.5893&109.6364&144.6857&191.4346&80.8943&111.2767&143.9320&199.8840\\
			1000&76.5574&108.6585&138.5364&189.5779&79.9456&110.3208&144.0488&191.0527 \\
			limit&75.6944&103.1128&133.9440&173.5202&74.4492&102.3713&131.6506&171.6440\\ \hline
			
		\end{tabular}
	\end{center}
	\begin{center}
		\caption{\label{tab3_IP}Empirical quantiles of the self-normalized Grenander-Rosenblatt and the self-normalized Cram\'er-von Mises statistic for the MCARMA(2,1) process. The estimated quantiles of the limit random variable are denoted as ``limit''}
	\end{center}
	
\end{table}

\begin{table}[h!]
	\begin{center}
		\begin{tabular}{|c||c|c|c|c|c|c|c|}\hline
			\multicolumn{8}{|c|}{CARMA(2,1) process} \\\hline
			\multicolumn{8}{|c|}{self-normalized Grenander-Rosenblatt statistic} \\\hline
			&\multicolumn{7}{|c|}{Gaussian distribution} \\ \hline
			\hspace*{0.1cm} $n$ \hspace*{0.1cm} & T & (C1)& (C2) & (C3) & (C4) & (C5) & (C6)   \\ \hline
			50&3.74&58.06&99.98&100&99.98&77.02&83.30 \\
			100&4.06&93.14&100&100&100&95.44&97.94\\
			200&4.14&99.94&100&100&100&99.76&99.98\\
			500&4.50&100&100&100&100&100&100\\
			1000&4.80&100&100&100&100&100&100\\
			\hline \hline
			
			\multicolumn{8}{|c|}{self-normalized Cram\'er-von Mises statistic} \\\hline
			&  \multicolumn{7}{|c|}{Gaussian distribution} \\ \hline
			\hspace*{0.1cm} $n$ \hspace*{0.1cm} & T & (C1)& (C2) & (C3) & (C4) & (C5) & (C6)   \\ \hline
			50&4.92&61.00&99.84&100&99.94&71.00&74.16 \\
			100&4.54&93.68&100&100&100&92.82&94.18 \\
			200&4.72&99.94&100&100&100&99.72&99.84\\
			500&5.32&100&100&100&100&100&100\\
			1000 &5.08&100&100&100&100&100&100\\
			\hline

		\end{tabular}
	\end{center}
	\begin{center}
		\caption{\label{tab4_IP}Percentages of rejection of the self-normalized Grenander-Rosenblatt and the self-normalized Cram\'er-von Mises test statistics of the CARMA(2,1) model based on 5000 replications and the significance level $\alpha=0.05$. Thereby, ``T'' stands for the correct specified model whereas ``(C1)-(C6)'' denote
CARMA(2,1) models under the alternative. }
	\end{center}
	
\end{table}

\begin{table}[h!]
	\begin{center}
		\begin{tabular}{|c||c|c|c|c|c|}\hline
			\multicolumn{6}{|c|}{MCAR(1) process} \\\hline
			\multicolumn{6}{|c|}{self-normalized Grenander-Rosenblatt statistic} \\\hline
			&\multicolumn{5}{|c|}{Gaussian distribution} \\ \hline
			\hspace*{0.1cm} $n$ \hspace*{0.1cm} & T & (O1)& (O2) & (O3) & (O4)  \\ \hline
			50&3.06&60.74&94.00&61.50& 32.86\\
			100&3.64&86.98&99.72&92.12&55.24 \\
			200&4.30&98.96&100&100&83.54\\
			500&4.03&100&100&100&99.98\\
			1000 &4.80&100&100&100&100\\
			\hline \hline
			
			\multicolumn{6}{|c|}{self-normalized Cram\'er-von Mises statistic} \\\hline
			&\multicolumn{5}{|c|}{Gaussian distribution} \\ \hline
			\hspace*{0.1cm} $n$ \hspace*{0.1cm} & T & (O1)& (O2) & (O3) & (O4)  \\ \hline
			50&4.08&47.02&87.98&55.96&21.82 \\
			100&4.30&73.10&99.00&86.78&36.32 \\
			200&4.56&94.68&100&99.26&62.00\\
			500&4.92&100&100&100&94.42\\
			1000 &4.62&100&100&100&100\\
			\hline

		\end{tabular}
	\end{center}
	\begin{center}
		\caption{\label{tab5_IP}Percentages of rejection of the self-normalized Grenander-Rosenblatt and the
self-normalized Cram\'er-von Mises test statistics of the bivariate CAR(1) model based on 5000 replications and the significance level $\alpha=0.05$. Thereby, ``T'' stands for the correct specified model whereas ``(O1)-(O4)'' denote CAR(1) models under the alternative. }
	\end{center}
	
\end{table}

\begin{table}[h!]
	\begin{center}
		\begin{tabular}{|c||c|c|c|c|c|}\hline
			\multicolumn{6}{|c|}{MCARMA(2,1) process} \\\hline
			\multicolumn{6}{|c|}{self-normalized Grenander-Rosenblatt statistic} \\\hline
			&  \multicolumn{5}{|c||}{Gaussian distribution}\\ \hline
			\hspace*{0.1cm} $n$ \hspace*{0.1cm} & T & (M1)& (M2) & (M3) & (M4)  \\ \hline
			50&4.92&99.98&99.22&58.76&99.68 \\
			100&4.92&100&100&83.20&100 \\
			200&4.98&100&100&98.20&100\\
			500&4.80&100&100&100&100\\
			1000&5.66&100&100&100&100\\
			\hline \hline
			
			\multicolumn{6}{|c|}{self-normalized Cram\'er-von Mises statistic} \\\hline
			&  \multicolumn{5}{|c||}{Gaussian distribution}\\ \hline
			\hspace*{0.1cm} $n$ \hspace*{0.1cm} & T & (M1)& (M2) & (M3) & (M4) \\ \hline
			50&5.46&99.98&99.32&65.04&99.80 \\
			100&5.40&100&100&87.16&100 \\
			200&4.86&100&100&98.74&100\\
			500&5.72&100&100&100&100\\
			1000&5.66&100&100&100&100\\
			\hline

		\end{tabular}
	\end{center}
	\begin{center}
		\caption{\label{tab5_IP}Percentages of rejection of the self-normalized Grenander-Rosenblatt and the
self-normalized Cram\'er-von Mises test statistics of the MCARMA(2,1) models based on 5000 replications and the significance level $\alpha=0.05$. Thereby, ``T'' stands for the correct specified model whereas ``(M1)-(M4)'' denote MCARMA(2,1) models under the alternative. }
	\end{center}
	
\end{table}

As we can see, the quantiles of the test statistics are really similar to those of the limit processes even for small sample sizes in all settings. However, it should be mentioned that the results become worse when choosing bigger sample sizes. Precisely, when estimating quantiles for $n=2500$ and bigger, the empirical quantiles deviate from the limit quantiles slightly. Since a similar effect does not occur when choosing a remarkably smaller upper integration limit of the Grenander-Rosenblatt statistic
as, e.g.,
$ 
   (\sqrt{n}/\sqrt{m}) \sup_{t\in [0,1/2]}\left|\int_0^t  I_{n,Y^{(\Delta)}}(\omega)f^{(\Delta)}_Y(\omega^{-1}) d\omega-tm\right|
$
we lead the effect back to worse estimation results for frequencies close to $\pi$. 

In the following, we investigate the two test statistics under the hypothesis but as well under some alternatives. We test at the $5\%$ level. Therefore, as hypothesis we use the three models of above but the data is generated by different processes.  Namely, we consider the parametrization in the CARMA(2,1)
setting
\begin{align*}A&=\left( \begin{array}{c c} 0 & 1  \\ \vartheta_1&\vartheta_2 \end{array}\right),\quad \quad B=\left( \begin{array}{c}\vartheta_3\\ \vartheta_1+\vartheta_2\vartheta_3 \end{array}\right),
\qquad C=\left( \begin{array}{c c }1 & 0 \end{array}\right),\quad    \Sigma_{L}=1 \end{align*}
and choose the parameters
\begin{align*}
(C1)\ \vartheta&=(-1,-2,1), \qquad (C2)\ \vartheta=(-1,-2,-3),\\   (C3)\ \vartheta&=(-2,-3,5), \qquad
(C4)\ \vartheta=(-2,-1,-2),\\
(C5)\ \vartheta&=(-2,-1,-1), \ \quad (C6)\ \vartheta=(-1,-1,-1.5),
\end{align*}
for the generating processes.
In the same way, in the bivariate MCAR(1) setting, we consider the parametrization  \begin{align*}A&=\left( \begin{array}{c c} \vartheta_1 & \vartheta_2 \\ \vartheta_3&\vartheta_4 \end{array}\right)= B,
\qquad C=Id_2=\Sigma_{L},\end{align*}
with \begin{align*}
(O1)\ \vartheta&=(-1,-1/2,1,-1), \quad\qquad (O2)\ \vartheta=(-1/2,-1,1,-1),  \\
(O3)\ \vartheta&=(-1/2,-1/2,0,-1), \qquad (O4)\ \vartheta=(-1/2,-1/2,1,-2),
\end{align*} and finally, we take
\begin{align*}A&=\left( \begin{array}{c c c} \vartheta_1 & \vartheta_2&0  \\0&0&1\\\vartheta_3&\vartheta_4&\vartheta_5  \end{array}\right) , \qquad B=\left( \begin{array}{c c }\vartheta_1 & \vartheta_2  \\\vartheta_6&\vartheta_7\\\vartheta_3+\vartheta_5\vartheta_6&\vartheta_4+\vartheta_5\vartheta_7  \end{array}\right), \quad
C=\left( \begin{array}{c c c}1 & 0& 0 \\0&1&0  \end{array}\right), \quad \Sigma_L=Id_2,  \end{align*}
in the MCARMA(2,1) setting with the parameters \begin{align*}
(M1)\ \vartheta&=(-2,1,-3,-1,-1,1,1), \quad\qquad (M2)\ \vartheta=(-2,-1,3,-1,-3,-1,-3),  \\
(M3)\ \vartheta&=(-1,5,-1,0,-3,-1,-1), \  \qquad (M4)\ \vartheta=(-1,4,-2,0,-3,-1,-1),
\end{align*}
to generate data under different alternatives.
The results are presented in \Cref{tab4_IP} - \Cref{tab5_IP}. As suspected, in the correct specified setting, the statistics hold the given level for most sample sizes. Under the alternatives, the statistics reject quite often for moderate sample sizes and detect every alternative with certainty for $n=1000$ and higher. The performances of the self-normalized Grenander-Rosenblatt and Cram\'er-von Mises statistic seem to be comparable.

\section{Proofs}    \label{sec:Proofs}

Let $E_{n,N^{(\Delta)}}$ be the empirical spectral process based on the i.i.d. sequence
$(N_k^{(\Delta)})_{k\in\Z}$, see \eqref{emp_noise} below for an explicit definition.
The aim is to decompose $\tr(E_{n}(g))$ as in \eqref{eq15} in
$$\tr(E_{n}(g))=\tr(E_{n,N^{(\Delta)}}(g^\Phi))+E_{n,R}(g)$$
 and show that $E_{n,R}$ is asymptotically neglectable. Then, the asymptotic behavior of
$\tr(E_{n}(g))$ is determined by the asymptotic behavior of $\tr(E_{n,N^{(\Delta)}}(g^\Phi))$.
Thus,  in \Cref{Section_5.1}, we first show the asymptotic behavior of the empirical spectral process $E_{n,N^{(\Delta)}}$  as a kind of special case of  \Cref{Satz::intP::Y}.
Then, in \Cref{Section:5.2}, we derive that the error $E_{n,R}$ which occurs by approximating the original process by the white noise process
has no influence on the limit behavior.

\subsection{The functional central limit theorem for the white noise process} \label{Section_5.1}

\subsubsection{Preliminaries}
For $s\geq 0$ we define the space
 $\mathcal H_{*,r}^s:=\{g\in \mathcal H_r|\,\,\, \|g\|_{*,s}<\infty\}$ with
\beao
    \|g\|_{*,s}^2:=\|g\|_{Id_r,s}^2:=\sum_{h=-\infty}^\infty(1+|h|)^{2s}\|\widehat g_h\|^2,
\eeao
which is a normed space.
Again, $\|g\|_{*,0}=\|g\|_{2}$.
In the case that $\mathcal{G}_r$ is additionally a vector space, we consider the dual space
\beao
    {\mathcal{G}_r^*}':=\{F: \mathcal G_r\to \C^{r\times r}| \,\,\,F \text{ is linear }\}
\eeao
with the operator norm
\beao
    \|F\|_{{\mathcal{G}_r^*}'}^s:=\sup_{g\in \mathcal{G}_r \atop {\|g\|_{*,s}\leq 1}}\|F(g)\| \quad \text{ for } F\in {\mathcal{G}_r^*}',
\eeao
which generates the metric $d_{{\mathcal{G}_r^*}'}^s$.

 \subsubsection{The functional central limit theorem for the white noise process: main results}
 We  introduce the assumptions to derive the asymptotic behavior of the empirical spectral process for
 the white noise process which correspond to those of Assumption B .
 \begin{assumptionletter2}\label{Assumption N} $\mbox{}$\\
 	Let $\mathcal G_r\subseteq \mathcal H_r$, $h\in\mathcal H_r$ and
    suppose $\|g\|_{*,s}<\infty$ for any $g\in \mathcal G_r$ and some $s\geq 0$.	Suppose that
 $\mathcal{G}_r$ satisfies either (B1), (B2) or (B3).
 \end{assumptionletter2}
Assumption N is not requiring $\mathcal G_r$ to be totally bounded, but in $(B2)$ and $(B3)$ this is already satisfied.

 \begin{theorem} \label{Theorem:5.2}
  Let Assumption $A$ and $N$ hold. Furthermore, suppose that $\sup_{g\in\mathcal{G}_r}\|g\|_{*,s}<\infty$
  and  $\mathcal G_r$ is totally bounded.
  Define
  \beam \label{emp_noise}
      E_{n,N^{(\Delta)}}(g):=\sqrt n\int_{-\pi}^{\pi}g(\omega)\left(I_{n,N^{(\Delta)}}(\omega)-\frac 1 {2\pi }\Sigma_N^{(\Delta)}\right)d\omega \quad \text{ for }g\in \mathcal G_r.
  \eeam
   Then, as $n\to\infty$,
 	\begin{align*}
    (E_{n,N^{(\Delta)}}(g))_{g\in\mathcal G_r}
 	      \overset{\mathcal D}{\longrightarrow}
    (E_N(g))_{g\in\mathcal G_r}\quad \text{in } (\mathcal C (\mathcal G_{r},\C^{r\times r}),d_{\mathcal G_{r}}),
 \end{align*}
 where
 \begin{align*}
 E_N(g)&=\widehat g_0W_0'+\sum_{h=1}^{\infty}\left(\widehat g_hW_h+\widehat g_{-h} W_{h}^\top\right)
 \end{align*}
 and $W_0'$ and $(W_h)_{h\in\Z}$ are defined as in \Cref{Satz::intP::Y2}.
 \end{theorem}
Of course, $\sup_{g\in\mathcal{G}_r}\|g\|_{*,s}<\infty$ implies $\sup_{g\in\mathcal{G}_r}\|g\|_{2}<\infty$.
Similarly, we obtain the following result in the dual space.

 \begin{theorem}\label{Satz::intP::N}~\\
 	Let Assumption $A$ and $N$ hold. Furthermore, suppose that $\mathcal G_r$ is a Hilbert space. 
 Then, as $n\to\infty$,
 	\begin{align*}
    (E_{n,N^{(\Delta)}}(g))_{g\in\mathcal G_r}
 	      \overset{\mathcal D}{\longrightarrow}
    (E_N(g))_{g\in\mathcal G_r}\quad \text{  in }  ({\mathcal G_r^*}',d_{{\mathcal G_r^\ast}'}^s).
 \end{align*}
 \end{theorem}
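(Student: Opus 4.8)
The first point to notice is that, once $\mathcal G_N$ is a (separable) Hilbert space, only case $(N1)$ of Assumption N carries any content: the entropy integral in $(N2)$ forces $\mathcal G_N$ to be totally bounded in $d_2$, and a totally bounded \emph{linear} subspace is $\{0\}$, while the parametrised curve in $(N3)$ is not a vector space. Hence one may assume $s>1/2$ throughout, and the entropy and cumulant machinery of $(N2)$--$(N3)$ plays no role in the dual-space setting. The plan is then to write $E_{n,N}$ as a linear functional of a \emph{finite} block of sample autocovariances plus a tail that is uniformly negligible in the operator norm $\|\cdot\|_{{\mathcal G_N^*}'}^s$. Setting $\overline\Gamma_{n,N}(h):=\tfrac1n\sum_{k=1}^{n-h}N_{k+h}^{(\Delta)}N_k^{(\Delta)\top}$ and $\overline\Gamma_{n,N}(-h):=\overline\Gamma_{n,N}(h)^\top$, and using $I_{n,N}(\omega)=\tfrac1{2\pi}\sum_{|h|<n}\overline\Gamma_{n,N}(h)e^{-ih\omega}$ together with $\tfrac1{2\pi}\int_{-\pi}^\pi g(\omega)e^{-ih\omega}\,d\omega=\widehat g_h$, one obtains
\[
E_{n,N}(g)=\sqrt n\,\widehat g_0\bigl(\overline\Gamma_{n,N}(0)-\Sigma_N^{(\Delta)}\bigr)+\sqrt n\sum_{h=1}^{n-1}\Bigl(\widehat g_h\,\overline\Gamma_{n,N}(h)+\widehat g_{-h}\,\overline\Gamma_{n,N}(h)^\top\Bigr),
\]
which is exactly the structure of $E_N(g)=\widehat g_0W_0'+\sum_{h\ge1}(\widehat g_hW_h+\widehat g_{-h}W_h^\top)$ from \Cref{Theorem:5.2}, with $\sqrt n(\overline\Gamma_{n,N}(0)-\Sigma_N^{(\Delta)})$ and $\sqrt n\,\overline\Gamma_{n,N}(h)$ in the roles of $W_0'$ and $W_h$.

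Next I would truncate. Fix $M\in\N$ and let $E_{n,N}^{(M)}$ and $E_N^{(M)}$ denote the analogous functionals with the sum stopped at $h=M$. The map $(\Gamma_0,\dots,\Gamma_M)\mapsto\bigl[g\mapsto\widehat g_0\Gamma_0+\sum_{h=1}^M(\widehat g_h\Gamma_h+\widehat g_{-h}\Gamma_h^\top)\bigr]$ is Lipschitz from $(\C^{N\times N})^{M+1}$ into $({\mathcal G_N^*}',d_{{\mathcal G_N^*}'}^s)$, since $\|\widehat g_h\|\le(1+|h|)^{-s}\|g\|_{*,s}$ and the target values are genuinely bounded linear operators. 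Under Assumption A the sequence $(N_k^{(\Delta)})$ is i.i.d.\ with $\E\|N_1^{(\Delta)}\|^4<\infty$, so the classical multivariate central limit theorem for sample autocovariances of an i.i.d.\ sequence (cf.\ \cite{Brockwell:Davis:1991}) gives $\sqrt n\bigl(\overline\Gamma_{n,N}(0)-\Sigma_N^{(\Delta)},\overline\Gamma_{n,N}(1),\dots,\overline\Gamma_{n,N}(M)\bigr)\overset{\mathcal D}{\longrightarrow}(W_0',W_1,\dots,W_M)$, with the blocks independent (distinct non-zero lags are uncorrelated) and the covariance structure of \Cref{Theorem:5.2}. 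The continuous mapping theorem then yields $E_{n,N}^{(M)}\overset{\mathcal D}{\longrightarrow}E_N^{(M)}$ in $({\mathcal G_N^*}',d_{{\mathcal G_N^*}'}^s)$ for every fixed $M$.

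The main, and essentially the only delicate, step is the uniform tail bound. For $\|g\|_{*,s}\le1$, Cauchy--Schwarz with the weights $(1+h)^{\pm s}$ gives
\[
\bigl\|E_{n,N}(g)-E_{n,N}^{(M)}(g)\bigr\|\le\sqrt2\,\Bigl(\sum_{h=M+1}^{n-1}(1+h)^{-2s}\,n\,\|\overline\Gamma_{n,N}(h)\|^2\Bigr)^{1/2},
\]
and, since for $h\ge1$ the summands defining $\overline\Gamma_{n,N}(h)$ are orthogonal, $\E\|\overline\Gamma_{n,N}(h)\|^2=\tfrac{n-h}{n^2}\bigl(\tr\Sigma_N^{(\Delta)}\bigr)^2\le n^{-1}\bigl(\tr\Sigma_N^{(\Delta)}\bigr)^2$. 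Hence
\[
\E\Bigl[\sup_{\|g\|_{*,s}\le1}\bigl\|E_{n,N}(g)-E_{n,N}^{(M)}(g)\bigr\|^2\Bigr]\le 2\bigl(\tr\Sigma_N^{(\Delta)}\bigr)^2\sum_{h=M+1}^{\infty}(1+h)^{-2s},
\]
which tends to $0$ as $M\to\infty$ \emph{uniformly in $n$} — and this is precisely where $s>1/2$, i.e.\ the summability of $(1+h)^{-2s}$, enters. The same computation with $\|W_h\|$ in place of $\sqrt n\|\overline\Gamma_{n,N}(h)\|$ shows $\|E_N-E_N^{(M)}\|_{{\mathcal G_N^*}'}^s\to0$ a.s., which in particular makes $E_N$ a well-defined random element of $({\mathcal G_N^*}',d_{{\mathcal G_N^*}'}^s)$ (an a.s.\ norm limit of finite-rank operators). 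Combining the three facts — $E_{n,N}^{(M)}\Rightarrow E_N^{(M)}$ for each $M$, $E_N^{(M)}\Rightarrow E_N$, and $\lim_{M}\limsup_{n}\P(\|E_{n,N}-E_{n,N}^{(M)}\|_{{\mathcal G_N^*}'}^s>\varepsilon)=0$ — the standard approximation theorem for weak convergence (\cite{billingsley}, Theorem~3.2) delivers $E_{n,N}\overset{\mathcal D}{\longrightarrow}E_N$.

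Finally, a slightly more economical route would be to establish the statement first for the full Hilbert space $\mathcal H_N^s:=\{g\in\mathcal H_N:\|g\|_{*,s}<\infty\}$ — the $N$-dimensional analogue of \Cref{Theorem:main3} — and then obtain the claim for an arbitrary closed subspace $\mathcal G_N$ by composing with the norm-decreasing restriction map $(\mathcal H_N^s)'\to{\mathcal G_N^*}'$ and invoking the continuous mapping theorem. In either presentation the result is comparatively soft; the substantive work of the paper lies in the $\mathcal C(\mathcal G_m,\C)$-valued statements and in showing that the approximation error $E_{n,R}$ of \eqref{eq15} is asymptotically negligible.
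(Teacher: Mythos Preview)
Your proof is correct and rests on the same key estimate as the paper—the Cauchy–Schwarz bound with weights $(1+h)^{\pm s}$ on the tail $\sum_{h>M}\widehat g_h\overline\Gamma_{n,N}(h)$, which needs precisely $s>1/2$—but you organise the argument differently. The paper invokes de~Acosta's characterisation of tightness in a Banach space (finite-dimensional convergence plus flat concentration), checks flat concentration via the projection onto the span of the coefficient-extraction functionals $e_h(g)=\widehat g_h$, and verifies condition \eqref{unibound1} using that same tail bound (this is \Cref{tightness::N1}(b)). You instead bypass the tightness language entirely: truncate to $|h|\le M$, use the multivariate CLT for sample autocovariances plus the continuous mapping theorem to get $E_{n,N}^{(M)}\Rightarrow E_N^{(M)}$ in the dual space, and then appeal to Billingsley's approximation theorem (Theorem~3.2) with the uniform-in-$n$ tail control. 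The computations under the hood are identical; your packaging is a bit more elementary for this particular statement, while the paper's flat-concentration framework has the advantage that it treats \Cref{Theorem:5.2} and \Cref{Satz::intP::N} in parallel. Your opening observation—that $(N2)$ forces $d_2$-total boundedness, hence a Hilbert-space $\mathcal G_N$ (necessarily normed by $\|\cdot\|_{*,0}=\|\cdot\|_2$ in that case) must be $\{0\}$, so only $(N1)$ is operative here—is correct and a point the paper does not make explicit; it still states \Cref{tightness::N3}(b) for the $(N2)$ case.
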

Note,  the assumption of totally boundedness of $\mathcal G_r$ is not necessary. Furthermore,
$\mathcal{G}_r$ as defined in (B3) is not a Hilbert space and hence, not covered in this theorem.

 It is well known that a sequence of probability measures in some Banach space converges weakly if it is tight in the weak topology and if the finite dimensional distributions converge.
 Therefore, we first prove the weak convergence of the finite dimensional distributions of $E_{n,N^{(\Delta)}}$.

 \subsubsection{The functional central limit theorem for the white noise process: Convergence of the finite dimensional distributions }
 \begin{lemma}\label{fidiCon}~\\
 	Let Assumption $N$ hold and $\E\|L_1\|^4<\infty$. Then, for $ k\in \N$, $g_1,\ldots,g_k \in \mathcal H_r$ we have
 $$(E_{n,N^{(\Delta)}}(g_1),\ldots, E_{n,N^{(\Delta)}}(g_k))\overset{\mathcal D}{\longrightarrow}(E_N(g_1),\ldots,E_N(g_k)) \quad \text{ in } \C^{r\times rk}.$$
 \end{lemma}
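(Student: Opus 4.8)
The plan is to reduce the multivariate, matrix-valued statement to a scalar statement entrywise and then apply a Cramér–Wold device together with a central limit theorem for quadratic forms of i.i.d. random vectors. Since $E_{n,N}(g) = \sqrt n \int_{-\pi}^{\pi} g(\omega)\big(I_{n,N}(\omega) - \tfrac{1}{2\pi}\Sigma_N^{(\Delta)}\big)\,d\omega$ is linear in $g$, it suffices to establish the joint convergence in $\C$ of all real and imaginary parts of all entries of the collection $(E_{n,N}(g_1),\dots,E_{n,N}(g_k))$, which by the Cramér–Wold device means showing that every fixed real-linear combination converges to the corresponding real-linear combination of the $E_N(g_j)$. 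Every such linear combination can be written, after expanding the periodogram $I_{n,N}(\omega) = \frac{1}{2\pi n}\big(\sum_{j=1}^n N_j^{(\Delta)} e^{-ij\omega}\big)\big(\sum_{l=1}^n N_l^{(\Delta)}e^{il\omega}\big)^\top$ and interchanging sum and integral, in the form
\begin{align*}
\frac{1}{\sqrt n}\sum_{j,l=1}^{n} N_j^{(\Delta)\top} \widehat g_{j-l}\, N_l^{(\Delta)} - \frac{1}{\sqrt n}\sum_{k=1}^{n}\tr\big(\widehat g_0 \,\Sigma_N^{(\Delta)}\big),
\end{align*}
i.e. a centered quadratic form $Q_n = \frac{1}{\sqrt n}\sum_{j,l} N_j^{(\Delta)\top} A_{j-l} N_l^{(\Delta)} - \E[\cdot]$ in the i.i.d. mean-zero vectors $(N_k^{(\Delta)})$, where the matrices $A_h = \widehat g_h$ (or combinations thereof) are absolutely summable since the relevant Fourier coefficients are square-summable (indeed, for the functions appearing they are handled by the finite-moment hypotheses and Parseval).

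The key steps, in order, are: (i) rewrite $E_{n,N}(g)$ as a quadratic form in $(N_k^{(\Delta)})_{k=1,\dots,n}$ using Fubini and the Fourier-coefficient notation, separating the diagonal $h=0$ contribution (which produces the $\widehat g_0 W_0'$ term, whose covariance involves the genuine fourth moment $\E[N_1^{(\Delta)}N_1^{(\Delta)\top}\otimes N_1^{(\Delta)}N_1^{(\Delta)\top}]$) from the off-diagonal contributions (which produce the $\sum_{h\geq 1}(\widehat g_h W_h + \widehat g_{-h}W_h^\top)$ terms with covariance $\Sigma_N^{(\Delta)}\otimes\Sigma_N^{(\Delta)}$); (ii) truncate the Fourier expansions at a fixed level $M$, so that the truncated statistic is a quadratic form with finitely many lag matrices, and invoke a CLT for quadratic forms of i.i.d. vectors with finite fourth moment — this is classical and gives joint asymptotic normality of the truncated versions with the stated covariance structure; (iii) compute the limiting covariance and match it term-by-term with the covariance of $\widehat g_0 W_0' + \sum_{h=1}^{M}(\widehat g_h W_h + \widehat g_{-h} W_h^\top)$, using independence of $W_0'$ and the $W_h$ and the explicit covariances in \eqref{def W}; (iv) control the truncation error: show $\sup_n \E\|Q_n - Q_n^{(M)}\|^2 \to 0$ as $M\to\infty$ uniformly in $n$, which follows from $\E\|N_1^{(\Delta)}\|^4 < \infty$ together with $\sum_{|h|>M}\|\widehat g_h\|^2 \to 0$, and then apply a standard triangular-array approximation lemma (e.g. Theorem 3.2 in Billingsley) to pass from the truncated to the full statistic; (v) conclude the joint convergence over $g_1,\dots,g_k$ by applying all of the above simultaneously to an arbitrary real-linear combination, since any such combination is again a single quadratic form of the same type.

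The main obstacle I expect is step (iv), the uniform-in-$n$ control of the truncation error for the off-diagonal (non-Toeplitz-diagonal) part: one must bound the variance of $\frac{1}{\sqrt n}\sum_{j\neq l} N_j^{(\Delta)\top}(\sum_{|h|>M}\widehat g_h e^{\cdots}) N_l^{(\Delta)}$ uniformly in $n$, which requires an identity of the form $\sum_{j,l} \|\widehat g_{j-l}\|^2 \approx n\sum_h\|\widehat g_h\|^2$ (up to boundary effects) and careful use of independence so that cross terms in the variance vanish; the bound must be genuinely uniform in $n$ so that letting $M\to\infty$ afterwards is legitimate. The diagonal part is easier since it is a sum of $n$ i.i.d. terms $N_j^{(\Delta)\top}\widehat g_0 N_j^{(\Delta)}$ and a plain Lindeberg–Feller CLT applies directly, with the $W_0'$ covariance coming out by a direct computation of $\Var(\vecc(N_1^{(\Delta)}N_1^{(\Delta)\top}))$. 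A secondary technical point is ensuring the Fubini interchange is valid, which needs only that $g_i \in \mathcal H_N$ so the Fourier coefficients are $\ell^2$, and that the finite-$n$ sums are genuinely finite, so no moment assumption beyond $\E\|L_1\|^4<\infty$ is needed for this lemma — consistent with its hypotheses.
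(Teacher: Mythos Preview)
Your proposal is correct and follows essentially the same route as the paper's proof: reduce to a single $g$ via linearity/Cram\'er--Wold, expand $E_{n,N}(g)$ in terms of the sample autocovariances $\overline\Gamma_{n,N}(h)$ (your ``quadratic form'' language), truncate the Fourier expansion at level $M$, apply a CLT for the finite collection of sample autocovariances of an i.i.d.\ sequence with finite fourth moment, bound the tail in $L^2$ using Chebyshev together with the uncorrelatedness of $\overline\Gamma_{n,N}(h)$ for distinct $h$ and Parseval's identity $\sum_{|h|>M}\|\widehat g_h\|^2\to 0$, and conclude by the standard approximation lemma. One small slip: you write that the Fourier coefficients are ``absolutely summable since \ldots\ square-summable'', which is false in general, but since your actual tail bound in step (iv) only uses square-summability this does not affect the argument.
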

 \begin{proof}
 	Let $C_1,\ldots,C_k\in\C^{r\times r}$. Then,
    \beao
        \sum_{j=1}^k\text{vec}(C_j^\top)^\top\text{vec}(E_{n,N^{(\Delta)}}(g_j))=\tr\left(E_{n,N^{(\Delta)}}\left(\sum_{j=1}^kC_jg_j\right)\right)
    \eeao
    and $\sum_{j=1}^kC_jg_j\in\mathcal H_r$.
    Thus,  it is sufficient to prove that
     $$E_{n,N^{(\Delta)}}(g)\overset{\mathcal D}{\longrightarrow}E_{N}(g) \quad \text{ in } \C^{r\times r} \quad \text{ for any } g\in \mathcal H_r. $$
 	Further, the representation
 	\begin{eqnarray}
 	\nonumber E_{n,N^{(\Delta)}}(g)
 	\nonumber&=&\sqrt n\int_{-\pi}^{\pi}\sum_{\ell=-\infty}^\infty\widehat g_\ell e^{i\omega \ell}\left(\frac{1}{2\pi n}\sum_{j,k=1}^{n}N_j^{(\Delta)}N_k^{(\Delta)\top}e^{-i\omega(j-k)}-\frac 1 {2\pi }\Sigma_N^{(\Delta)}\right)d\omega\\
 	\nonumber&=&\frac{1}{ \sqrt n}\sum_{k=1}^{n}\sum_{\ell=1-k}^{n-k}\widehat g_{\ell}N_{\ell+k}^{(\Delta)}N_k^{(\Delta)\top}-\sqrt n\widehat g_0 \Sigma_N^{(\Delta)}\\
 %
 	  &=&\sqrt n\widehat g_0(\overline \Gamma_{n,N^{(\Delta)}}(0)-\Sigma_{N^{(\Delta)}})+\sqrt n\left( \sum_{h=1}^{n-1}\widehat g_h\overline \Gamma_{n,N^{(\Delta)}}(h)+\widehat g_{-h} \overline \Gamma_{n,N^{(\Delta)}}(h)^\top\right) \label{aux_rep}
 	\end{eqnarray}
 	holds. We fix an upper bound for $h$, say $M$, and apply Lemma 6 in \cite{Fasen:Mayer:2020:Supp}.
 	Thus, we have as $n\to\infty$,
 \begin{eqnarray*}
 	\lefteqn{\hspace*{-2cm}\sqrt n\widehat g_0(\overline \Gamma_{n,N^{(\Delta)}}(0)-\Sigma_{N^{(\Delta)}})+\sqrt n \sum_{h=1}^{M}\left(\widehat g_h\overline \Gamma_{n,N^{(\Delta)}}(h)+\widehat g_{-h} \overline \Gamma_{n,N^{(\Delta)}}(h)^\top\right)}\\
    &&\hspace*{-2cm}\overset{\mathcal D}{\longrightarrow} \widehat g_0W_0'+\sum_{h=1}^{M}\left(\widehat g_hW_h+\widehat g_{-h} W_h^\top\right).
 	\end{eqnarray*}
  	In view of Proposition 6.3.9  of \cite{Brockwell:Davis:1991}, it remains to prove
  \begin{align*}
 \lim_{M\to\infty}\limsup_{n\to \infty} \P\left(\left\|\sqrt n \sum_{h=M+1}^{n-1}\widehat g_h\overline \Gamma_{n,N^{(\Delta)}}(h)+\widehat g_{-h} \overline \Gamma_{n,N^{(\Delta)}}(h)^\top\right\|>\varepsilon\right)=0.\end{align*}
 	Tschebycheffs inequality leads to \begin{eqnarray*}\lefteqn{ \P\left(\left\|\sqrt n \sum_{h=M+1}^{n-1}\widehat g_h\overline \Gamma_{n,N^{(\Delta)}}(h)+\widehat g_{-h} \overline \Gamma_{n,N^{(\Delta)}}(h)^\top\right\|>\varepsilon\right)}\\
 		&&\leq \frac{n}{\varepsilon^2}\E\left[\left\| \sum_{h=M+1}^{n-1}\widehat g_h\overline \Gamma_{n,N^{(\Delta)}}(h)+\widehat g_{-h} \overline \Gamma_{n,N^{(\Delta)}}(h)^\top\right\|^2\right]. \end{eqnarray*}
 	Since $(N_k^{(\Delta)})_{k\in \Z}$ is i.i.d., $\overline\Gamma_{n,N^{(\Delta)}}(h)$ and $\overline\Gamma_{n,N^{(\Delta)}}(j)$ are uncorrelated for $h\neq j$.
 Due to the use of the Frobenius norm we  get
 \begin{eqnarray*}
 	\E\left[\left\| \sum_{h=M+1}^{n-1}\left(\widehat g_h\overline \Gamma_{n,N^{(\Delta)}}(h)+\widehat g_{-h} \overline \Gamma_{n,N^{(\Delta)}}(h)^\top\right)\right\|^2\right]
 		\leq 2\sum_{h=M+1}^{n-1}\left(\|\widehat g_h\|^2 +\|\widehat g_{-h}^\top\|^2\right) \E\left[\|\overline \Gamma_{n,N^{(\Delta)}}(h)\|^2\right].
 	\end{eqnarray*}
 	On the one hand,
    \begin{align*}
 	  \E\left[n \| \overline \Gamma_{n,N^{(\Delta)}}(h)\|^2\right]=\frac 1 n \sum_{S,T=1}^{r}\sum_{k,\ell=1}^{n-h}\E\left[\left(N_{k+h}^{(\Delta)}N_{k}^{(\Delta)\top}N_{\ell+h}^{(\Delta)}N_{\ell}^{(\Delta)\top}\right)[S,T]\right]
    \leq\frac{n-h}{n}\mathfrak{C},
 	\end{align*}
 	where $\mathfrak{C}$ is a constant which is independent of $h$.
 	On the other hand, Parseval's equality yields then 
    \beao
         \lim_{M\to\infty}\limsup_{n\to \infty} \sum_{h=M+1}^{n-1}\left(\|\widehat g_h\|^2 +\|\widehat g_{-h}^\top\|^2\right) \E\left[\|\overline \Gamma_{n,N^{(\Delta)}}(h)\|^2\right]=0,
    \eeao
    and the proof is completed.
 \end{proof}
 \subsubsection{The functional central limit theorem for the white noise process: Tightness}
 To prove \Cref{Satz::intP::N} and \Cref{Theorem:5.2}, respectively, it remains to show the tightness of $(E_{n,N^{(\Delta)}})_{n\in \N}$.
 Therefore, it is important that $(\mathcal{C}(\mathcal G_r,\C^{r\times r}),d_{\mathcal G_r})$ (if $\mathcal G_r$ is
 totally bounded) and $({\mathcal{G}_r^*}',d_{{\mathcal G_{r}^*}'}^{s})$ are Banach spaces.

 In \Cref{Satz::intP::N}, we additionally assumed that $\mathcal G_r$ is a Hilbert space. Thus, ${\mathcal{G}_r^*}'$
 is a Hilbert space as well and
   the set $\{\Phi_g: {\mathcal{G}_r^*}' \to\C^{r\times r} | g\in \mathcal G_r, \Phi_g(F)=F(g)\quad \forall \,F\in {\mathcal{G}_r^*}' \}$ is the  dual space of
    ${\mathcal{G}_r^*}'$. 
  Due to Theorem~2.3 in \cite{DeAcosta} (cf. \cite{Ledoux:Talagrand}) 
 and the convergence of the finite-dimensional distributions, it is then sufficient to show that $({E_{n,N^{(\Delta)}}})_{n\in \N}$
 is flatly concentrated, i.e., for any $\varepsilon$, $\delta>0$ there exists a finite-dimensional subspace $L\subset  {\mathcal{G}_r^*}'$
 with
 $$\inf_{n\in \N}\P(d_{{\mathcal G_{r}^*}'}^{s}(E_{n,r},L)\leq \delta )=\inf_{n\in \N}
        \P(\left\|\text{Pr}_{L^\perp}(E_{n,N^{(\Delta)}})\right\|_{{\mathcal{G}_r^*}'}^s\leq \delta ) \geq 1-\varepsilon.$$
 Therefore, we choose a sequence $L_M\subseteq {\mathcal{G}_r^*}'$, $M\in\N$, with
 \begin{align*}
    \lim_{M\to\infty}\limsup_{n\to \infty}\P\left(\left\|\text{Pr}_{L_M^\perp}(E_{n,N^{(\Delta)}})\right\|_{{\mathcal{G}_r^*}'}^s> \delta \right)=0.
 \end{align*}
 	For $M\in \N$, we define $L_M\subseteq {\mathcal{G}_r^*}'$ as the linear subspace generated by
 $(e_h)_{|h|\leq M}$ where $e_h: \mathcal{G}_r\to\C^{r\times r}$ is defined as $e_h(g)=\widehat g_h .$
 	Then, due to \eqref{aux_rep}, we obtain
    \begin{eqnarray*}
        \left\|\text{Pr}_{L_M^\perp}(E_{n,N^{(\Delta)}})\right\|_{{\mathcal{G}_r^*}'}^s\leq \sup_{\substack{g\in \mathcal G_r\\
\|g\|_{*,s}\leq 1  \\\widehat g_h=0,
        |h|\leq M}}\|E_{n,N^{(\Delta)}}(g)\|^2
 		&\leq & \sup_{\substack{g\in \mathcal G_r} \atop
\|g\|_{*,s}\leq 1}  \left\| \sqrt{n} \sum_{M<|h|\leq n}\widehat g_h\left(\overline \Gamma_{n,N^{(\Delta)}}(h)-\E[\overline \Gamma_{n,N^{(\Delta)}}(h)]\right)\right\|^2.
 \end{eqnarray*}
 Finally, since $\E[\overline \Gamma_{n,N^{(\Delta)}}(h)]=0$ for $h\not=0$, the sequence $(E_{n,N^{(\Delta)}})_{n\in\N}$ is flatly concentrated
 in  $({\mathcal{G}_r^*}',d_{{\mathcal G_{r}^*}'}^{s})$ if
\begin{align}\label{unibound1}
 \lim_{M\to\infty}\limsup_{n\to \infty} \P\left(\sup_{g\in \mathcal G_r \atop
\|g\|_{*,s}\leq 1}\sqrt{n}\left\| \sum_{M<|h|\leq n}\widehat g_h \overline \Gamma_{n,N^{(\Delta)}}(h)\right\|>\delta\right)=0 \qquad \forall\ \delta>0.
 \end{align}
 As consequence, for the proof of \Cref{Satz::intP::N} it remains to show \eqref{unibound1}. How is it in \Cref{Theorem:5.2}?

 In \Cref{Theorem:5.2}, we additionally assumed that $\mathcal G_r$ is totally bounded such that
 $(\mathcal{C}(\mathcal G_r,\C^{r\times r}),d_{\mathcal G_r})$  is a separable  Banach space.  Similarly as above
  a sufficient condition for $(E_{n,N^{(\Delta)}})_{n\in \N}$  to be flatly concentrated in $(\mathcal{C}(\mathcal G_r,\C^{r\times r}),d_{\mathcal G_r})$ is
 \begin{align}\label{unibound}
 \lim_{M\to\infty}\limsup_{n\to \infty} \P\left(\sup_{g\in \mathcal G_r}\sqrt{n}\left\| \sum_{M<|h|\leq n}\widehat g_h \overline \Gamma_{n,N^{(\Delta)}}(h)\right\|>\delta\right)=0 \qquad \forall\ \delta>0.
 \end{align}
 Due to \cite{BharuchaReidRoemisch}, Proposition 2.2, on a separable Banach space tightness is equivalent to flat concentration and
 uniform boundedness. But under the assumptions of \Cref{Theorem:5.2}, in $(\mathcal{C}(\mathcal G_r,\C^{r\times r}),d_{\mathcal G_r})$ the sequence $(E_{n,N^{(\Delta)}})_{n\in\N}$ is uniformly bounded if it is flatly concentrated:

  \begin{lemma} \label{aux2}
 Let Assumption A and  $\sup_{g\in \mathcal G_{r}}\|g\|_{2}<\infty$ hold.
 If \Cref{unibound} is satisfied then $(E_{n,N^{(\Delta)}})_{n\in\N}$
    is uniformly bounded in  $(\mathcal{C}(\mathcal G_r,\C^{r\times r}),d_{\mathcal G_r})$.
 \end{lemma}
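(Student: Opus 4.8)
The goal is to establish uniform boundedness of $(E_{n,N})_{n\in\N}$ in $(\mathcal C(\mathcal G_N,\C^{N\times N}),d_{\mathcal G_N})$ in the sense of \cite[Proposition~2.2]{BharuchaReidRoemisch}, i.e. $\lim_{K\to\infty}\sup_{n\in\N}\P\bigl(\|E_{n,N}\|_{\mathcal G_N}>K\bigr)=0$, where $\|E_{n,N}\|_{\mathcal G_N}=\sup_{g\in\mathcal G_N}\|E_{n,N}(g)\|$. The plan is to split $E_{n,N}$ into a low-frequency part, which involves only finitely many Fourier coefficients and can be controlled by a second-moment estimate that is uniform in $n$, and a high-frequency tail, which is handled directly by the hypothesis \eqref{unibound}; the two pieces are then combined with the triangle and Markov inequalities.

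First I would use the representation established in the proof of \Cref{fidiCon}: since $\E[\overline\Gamma_{n,N}(0)]=\Sigma_N^{(\Delta)}$ and $\E[\overline\Gamma_{n,N}(h)]=0$ for $h\neq0$, formula \eqref{aux_rep} gives $E_{n,N}(g)=\sqrt n\sum_{|h|\leq n-1}\widehat g_h\bigl(\overline\Gamma_{n,N}(h)-\E[\overline\Gamma_{n,N}(h)]\bigr)$ for every $g\in\mathcal G_N$. For $M\in\N$ I would split this as $E_{n,N}(g)=E_{n,N}^{(M)}(g)+\widetilde R_{n,M}(g)$ with $E_{n,N}^{(M)}(g):=\sqrt n\sum_{|h|\leq M}\widehat g_h\bigl(\overline\Gamma_{n,N}(h)-\E[\overline\Gamma_{n,N}(h)]\bigr)$ and $\widetilde R_{n,M}(g):=\sqrt n\sum_{M<|h|\leq n-1}\widehat g_h\overline\Gamma_{n,N}(h)$, so that $\|E_{n,N}\|_{\mathcal G_N}\leq\|E_{n,N}^{(M)}\|_{\mathcal G_N}+\|\widetilde R_{n,M}\|_{\mathcal G_N}$. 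By definition, \eqref{unibound} says exactly that for every $\eta>0$ one has $\lim_{M\to\infty}\limsup_{n\to\infty}\P\bigl(\|\widetilde R_{n,M}\|_{\mathcal G_N}>\eta\bigr)=0$, so the tail term is negligible once $M$ is chosen large.

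Next I would bound the low-frequency part uniformly in $n$. Parseval's equality together with the hypothesis $\sup_{g\in\mathcal G_N}\|g\|_2<\infty$ gives $\sup_{g\in\mathcal G_N}\|\widehat g_h\|\leq\sup_{g\in\mathcal G_N}\|g\|_2=:C_0<\infty$ for every $h$, hence $\|E_{n,N}^{(M)}\|_{\mathcal G_N}\leq C_0\sum_{|h|\leq M}\sqrt n\,\|\overline\Gamma_{n,N}(h)-\E[\overline\Gamma_{n,N}(h)]\|$. By Assumption A, and in particular $\E\|N_1^{(\Delta)}\|^4<\infty$, the second-moment computation carried out in the proof of \Cref{fidiCon} yields a constant $\mathfrak C$, independent of $n$ and $h$, with $\E\bigl[n\|\overline\Gamma_{n,N}(h)-\E[\overline\Gamma_{n,N}(h)]\|^2\bigr]\leq\mathfrak C$; by Cauchy--Schwarz this gives $\sup_{n\in\N}\E\bigl[\|E_{n,N}^{(M)}\|_{\mathcal G_N}\bigr]\leq C_0(2M+1)\sqrt{\mathfrak C}=:a_M<\infty$, and Markov's inequality yields $\sup_{n\in\N}\P\bigl(\|E_{n,N}^{(M)}\|_{\mathcal G_N}>t\bigr)\leq a_M/t$ for all $t>0$.

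Finally I would assemble the pieces: given $\varepsilon>0$, choose via \eqref{unibound} an $M$ and an $n_0$ with $\P(\|\widetilde R_{n,M}\|_{\mathcal G_N}>1)<\varepsilon/2$ for all $n\geq n_0$, then pick $K_1$ with $a_M/K_1<\varepsilon/2$, so that $\P(\|E_{n,N}\|_{\mathcal G_N}>K_1+1)<\varepsilon$ for all $n\geq n_0$; for the finitely many $n<n_0$ note that $\|E_{n,N}\|_{\mathcal G_N}\leq C_0\sqrt n\sum_{|h|\leq n-1}\|\overline\Gamma_{n,N}(h)-\E[\overline\Gamma_{n,N}(h)]\|<\infty$ a.s., so a further enlargement of the constant to some $K$ handles these $n$ as well, and letting $\varepsilon\downarrow0$ gives the claim. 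The only genuinely substantive ingredient is the uniform-in-$(n,h)$ bound $\E[n\|\overline\Gamma_{n,N}(h)-\E[\overline\Gamma_{n,N}(h)]\|^2]\leq\mathfrak C$, where the fourth-moment assumption on $L_1$ (via $N_1^{(\Delta)}$) enters through the $h=0$ term; everything else is bookkeeping, and I do not expect any real obstacle beyond making sure every estimate is uniform in $n$.
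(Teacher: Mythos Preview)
Your proof is correct and follows essentially the same approach as the paper: split $E_{n,N}$ into a low-frequency part controlled by the uniform second-moment bound $\sup_{h,n}n\E\|\overline\Gamma_{n,N}(h)-\E[\overline\Gamma_{n,N}(h)]\|^2\leq\mathfrak C$ (the paper records this as \Cref{aux3}) and a high-frequency tail handled by \eqref{unibound}, then combine via Markov's inequality. The only cosmetic difference is that the paper applies Cauchy--Schwarz to the sum $\sum_{|h|\leq M}\widehat g_h(\cdot)$ to get $\E\bigl[\sup_g\|\widetilde E_{n,N}^{(M)}(g)\|^2\bigr]\leq KM$, whereas you use the triangle inequality together with $\sup_g\|\widehat g_h\|\leq C_0$ to bound the first moment by $C_0(2M+1)\sqrt{\mathfrak C}$; both estimates suffice.
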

For completeness, the proof is provided in the Appendix.

  In summary, for the proof of tightness in \Cref{Theorem:5.2} and \Cref{Satz::intP::N}, respectively it remains to show that $(E_{n,N^{(\Delta)}})_{n\in\N}$
  is flatly concentrated  which is given by  \eqref{unibound} and \eqref{unibound1}, respectively.


  \begin{lemma}\label{tightness::N1}	~\\
 	Let Assumption $A$ and Assumption $(B1)$ hold.
    \begin{itemize}
        \item[(a)] Suppose $\sup_{g\in\mathcal{G}_r}\|g\|_{*,s}<\infty$ and $\mathcal G_r$ is totally bounded.  Then, the sequence $(E_{n,N^{(\Delta)}})_{n\in \N}$
        satisfies \eqref{unibound}, and is flatly concentrated and tight in $(\mathcal{C}(\mathcal G_{r},\C^{r\times r}),
 d_{\mathcal G_{r}}).$
                \item[(b)] Suppose $\mathcal{G}_r$ is a Hilbert space.  Then, the sequence $(E_{n,N^{(\Delta)}})_{n\in \N}$ satisfies \eqref{unibound1} and is
        flatly concentrated and tight in $({\mathcal{G}_r^*}',d_{{\mathcal G_{r}^*}'}^{s})$.
        \end{itemize}
 \end{lemma}
 \begin{proof} $\mbox{}$
 	\begin{itemize}
 		\item[(a)]
  Due to Markov's inequality  we receive
    \beao
        \lefteqn{\P\left(\sup_{g\in \mathcal G_r}\left\|\sqrt{n} \sum_{M<|h|\leq n}\widehat g_h \overline \Gamma_{n,N^{(\Delta)}}(h)\right\|>\delta\right)}\\
            &&\leq \frac{1}{\delta^2}\E\left(\sup_{g\in \mathcal G_r}n\left\| \sum_{M<|h|\leq n}\widehat g_h \overline \Gamma_{n,N^{(\Delta)}}(h)\right\|^2\right)\\
                      &&\leq \frac{1}{\delta^2}\sup_{g\in\mathcal{G}_r}\sum_{h=-\infty}^\infty (1+|h|)^{2s}\|\widehat g_h\|^2n\E\left( \sum_{M<|h|\leq n}(1+|h|)^{-2s}\left\|\overline \Gamma_{n,N^{(\Delta)}}(h)\right\|^2\right)\\
            &&\leq \frac{1}{\delta^2}\sup_{g\in\mathcal{G}_r}\|g\|_{*,s}^2 \sum_{M<|h|\leq n}(1+|h|)^{-2s} \sup_{M<|h|\leq n}\E\left(n\|\overline\Gamma_{n,N^{(\Delta)}}(h)\|^2\right).
    \eeao
 	Due to \Cref{aux3} 
  and $\sum_{M<|h|} (1+|h|)^{-2s}\overset{M\to \infty}{\longrightarrow}0$ for $s>1/2$, the convergence \eqref{unibound}
    follows.
 \item[(b)] The proof is analogue to the proof of (a) by replacing $\sup_{g\in \mathcal G_r }$ by $\sup_{g\in \mathcal G_r \atop
\|g\|_{*,s}\leq 1}$.
	\end{itemize}
 \end{proof}

 \begin{lemma}\label{tightness::N3}~\\
 	Let Assumption $A$ and  Assumption $(B2)$ hold.
     \begin{itemize}
        \item[(a)] Suppose $\sup_{g\in\mathcal{G}_r}\|g\|_{*,0}<\infty$.  Then, the sequence $(E_{n,N^{(\Delta)}})_{n\in \N}$ satisfies \eqref{unibound}, and is
        flatly concentrated and tight in $(\mathcal{C}(\mathcal G_{r},\C^{r\times r}),
 d_{\mathcal G_{r}}).$
                \item[(b)] Suppose $\mathcal{G}_r$ is a Hilbert space.  Then, the sequence $(E_{n,N^{(\Delta)}})_{n\in \N}$ satisfies \eqref{unibound1}, and is flatly concentrated and tight in $({\mathcal{G}_r^*}',d_{{\mathcal G_{r}^*}'}^{s})$.
    \end{itemize}
 \end{lemma}

 For the proof we require some auxiliary lemma.

 \begin{lemma} \label{Lemma:A}
 Suppose there exists a constant $K_L>0$ such that the joint cumulant satisfies \linebreak $|\text{cum}(BL_1[k_1],\ldots,BL_1[k_j])| \leq K_L^j$ for $k_1,\ldots,k_j\in\{1,\ldots,r\}$
 and  $j\in \N$.
 Define  for $M\in \N_0$
\beao
    E_{n,N^{(\Delta)}}^{(M)}(g):=\sqrt n \sum_{M<|h|\leq n}\widehat g_h \overline \Gamma_{n,N^{(\Delta)}}(h) \quad  \text{ for } g\in \mathcal H_r.
\eeao
  Then,
 there exist some constants $c_1,c_2>0$ such that for any  $\delta>0$ and $M\in\N_0$ we have
 \beam \label{A2}
      \P\left(\|E_{n,N^{(\Delta)}}^{(M)}(g)\|>\delta\right)\leq c_1\exp\left(-c_2\sqrt{\frac{\delta}{\sqrt{\sum_{M<|h|\leq n}\|\widehat g_h\|^2}}}\right) \quad \text{ for } g\in \mathcal H_r,\,n\in\N.
 \eeam
 \end{lemma}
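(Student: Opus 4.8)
The plan is to reduce the matrix‑valued tail bound to a scalar estimate for a centered off‑diagonal quadratic form in the i.i.d.\ vectors $(N_k^{(\Delta)})_k$, to bound the cumulants of that form, and to conclude with a Statulevicius‑type large‑deviation inequality. First a preliminary: the hypothesis $|\mathrm{cum}(L_1[k_1],\ldots,L_1[k_j])|\le K_L^j$ together with the representation $N_1^{(\Delta)}=\int_0^\Delta e^{A(\Delta-u)}B\,dL_u$ from \Cref{SSPsampledNat}, the multilinearity of cumulants of L\'evy integrals, and Assumption $A$(b), produces a constant $K_N>0$ with $|\mathrm{cum}(N_1^{(\Delta)}[k_1],\ldots,N_1^{(\Delta)}[k_j])|\le K_N^{\,j}$ for all $j\in\N$ and all component indices. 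Next, since $\|E_{n,N}^{(M)}(g)\|^2=\sum_{S,T=1}^N|E_{n,N}^{(M)}(g)[S,T]|^2$ and $|w|\le|\mathrm{Re}\,w|+|\mathrm{Im}\,w|$, the event $\{\|E_{n,N}^{(M)}(g)\|>\delta\}$ is contained in $\bigcup_{S,T}\{|\mathrm{Re}\,E_{n,N}^{(M)}(g)[S,T]|>\delta/(2N)\}\cup\{|\mathrm{Im}\,E_{n,N}^{(M)}(g)[S,T]|>\delta/(2N)\}$, so it suffices to prove a bound of the form \eqref{A2} for each real random variable $Z$ equal to $\mathrm{Re}$ or $\mathrm{Im}$ of some entry of $E_{n,N}^{(M)}(g)$ (with $\sum_{M<|h|\le n}\|\widehat g_h\|^2$ allowed to change by a fixed multiple). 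By \eqref{aux_rep}, each such $Z$ is a real, centered, \emph{off-diagonal} quadratic form
\beao
  Z=\frac{1}{\sqrt n}\sum_{M<|h|\le n}\ \sum_{k}\ \sum_{a,b} c_{h,k,a,b}\,N_{k+h}^{(\Delta)}[a]\,N_k^{(\Delta)}[b],
\eeao
with real coefficients obeying $\sum_{a,b}c_{h,k,a,b}^2\le 2\|\widehat g_h\|^2$; it is off-diagonal ($k+h\neq k$) because $M\ge0$ forces $h\neq0$, which is precisely why $Z$ has mean zero with no centering term.

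The main step is the cumulant bound. Put $\sigma:=\big(\sum_{M<|h|\le n}\|\widehat g_h\|^2\big)^{1/2}$; we may assume $\sigma>0$, else $Z\equiv0$ and \eqref{A2} is trivial. The claim is that there is a constant $\mathfrak C>0$, depending only on the model, such that $|\mathrm{cum}_j(Z)|\le (j!)^2\,\mathfrak C^{\,j}\,\sigma^{\,j}$ for all $j\ge2$. By multilinearity $\mathrm{cum}_j(Z)=n^{-j/2}\sum c\cdots c\;\mathrm{cum}(\text{the }j\text{ bilinear terms})$, and the product (diagram) formula writes each joint cumulant of $j$ products of two factors as a sum over partitions of the $2j$ factors that connect all $j$ terms; a block is time-homogeneous, so it cannot contain both factors of one term (their time indices differ), a block of size one vanishes since $\E N_1^{(\Delta)}[a]=0$, and a surviving block of size $r$ is bounded by $K_N^{\,r}$, whence every surviving diagram carries the factor $K_N^{2j}$. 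For a fixed diagram the remaining sum over time and lag indices is evaluated by passing to a spanning tree of the induced block graph (which has $b$ vertices, $j$ edges and, because all blocks have size $\ge2$, minimum degree $\ge2$, hence $j\ge b$), estimating the tree-edge contributions by $\sum_\ell\|\widehat g_\ell\|=\|\widehat g\|_1\le\sqrt{2n}\,\sigma$ and the chord contributions by Young's convolution inequality and Cauchy--Schwarz; the powers of $n$ from the single free time index, from $\|\widehat g\|_1$, and from the $n^{-j/2}$ prefactor combine to leave no uncompensated power of $n$, the outcome being $\le\mathfrak C_0^{\,j}\sigma^j$ per diagram. Summing over the at most $(2j)!\le 4^j(j!)^2$ diagram patterns gives the claim; for $j=2$ it reduces to the variance bound $\mathrm{Var}(Z)\le\mathfrak C\,\sigma^2$ already obtained in the second-moment computation in the proof of \Cref{fidiCon}.

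To conclude, set $\widehat Z:=Z/\sigma$, so that $\mathrm{cum}_1(\widehat Z)=0$ and $|\mathrm{cum}_j(\widehat Z)|\le(j!)^2\mathfrak C^{\,j}$ for all $j\ge2$ with $\mathfrak C$ an absolute constant; thus $\widehat Z$ satisfies the Statulevicius condition $(S_\gamma)$ with $\gamma=1$ and scale and variance-proxy parameters that are absolute constants. The associated large-deviation inequality (\cite{Saulis:Statulevicius}) yields absolute constants $\widetilde c_1,\widetilde c_2>0$ with $\P(|\widehat Z|\ge x)\le\widetilde c_1\exp\!\big(-\widetilde c_2\min\{x^2,\sqrt x\}\big)$ for all $x>0$; substituting $x=\delta/\sigma$ gives $\P(|Z|\ge\delta)\le\widetilde c_1\exp\!\big(-\widetilde c_2\min\{(\delta/\sigma)^2,\sqrt{\delta/\sigma}\}\big)$. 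For $\delta\ge\sigma$ the minimum equals $\sqrt{\delta/\sigma}$, the asserted form; for $\delta<\sigma$ the right-hand side of \eqref{A2} exceeds $\widetilde c_1 e^{-\widetilde c_2}$, so the bound holds trivially once $c_1$ is enlarged to satisfy $c_1\ge\widetilde c_1 e^{\widetilde c_2}$. Summing the scalar bound over the $2N^2$ real and imaginary parts of the entries and absorbing the harmless factors into the constants yields \eqref{A2}.

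The delicate point is the cumulant estimate: keeping the combinatorial factors at the level $(j!)^2\mathfrak C^j$ while verifying that for \emph{every} connected diagram with all blocks of size $\ge2$ the index summations produce exactly the weight $\sigma^j$ with no leftover power of $n$. The extremal diagrams are the ``cycle-type'' ones (block graph with as many edges as vertices), and these must be estimated via Young's convolution inequality rather than a crude $\ell^\infty$ bound on the chord factors.
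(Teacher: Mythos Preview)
Your proposal is correct and follows essentially the same route as the paper: both first transfer the cumulant bound from $L_1$ to $N_1^{(\Delta)}$ and then invoke the cumulant--Statulevicius machinery underlying Lemma~2.3 of \cite{dahlhaus1988empirical}. The only difference is presentational: the paper simply verifies Dahlhaus's Assumption~(2.1)(a) (the uniform bound on the cumulant spectrum of $(N_k^{(\Delta)})$) and then cites his Lemma~2.3 verbatim, whereas you unroll that lemma yourself---reducing to a scalar off-diagonal quadratic form, bounding $|\mathrm{cum}_j(Z)|\le (j!)^2\mathfrak C^j\sigma^j$ via the diagram formula, and applying the Saulis--Statulevi\v{c}ius inequality; your honest flag that the per-diagram $n$-cancellation is the delicate combinatorial step is exactly the content of Dahlhaus's proof, so nothing is missing beyond what the paper itself defers to that reference.
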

 \begin{proof}
 	We prove that Assumption (2.1) (a) of \cite{dahlhaus1988empirical} is satisfied.
 Therefore, define  the $k$-th order cumulant spectrum $f_{k_1,\ldots, k_j}$ of $(N^{(\Delta)}_{k}[k_1],\ldots,N^{(\Delta)}_{k}[k_{j}])_{k\in\N}$ for  $k_1,\ldots, k_j \in \{1,\ldots, r\}$ and $j\in \N$
    as  in \cite{Brillinger}, p.~25, and show
  that there exists some constant $K_f>0$ such that $$|f_{k_1,\ldots, k_j}(\lambda_1,\ldots, \lambda_{j-1})|\leq K_f^j$$ for all  $ k_1,\ldots, k_j \in \{1,\ldots, r\}$,
  $ \lambda_1,\ldots, \lambda_{j-1}\in \R$ and $j\in \N$.  Since $(N_k^{(\Delta)})_{k\in\N}$ is an i.i.d. sequence, we have
  \begin{eqnarray*}
 f_{k_1,\ldots, k_j}(\lambda_1,\ldots,\lambda_{j-1})
 	=\frac{1}{(2\pi)^{j-1}}\text{cum}(N^{(\Delta)}_{1}[k_1],\ldots,N^{(\Delta)}_{1}[k_{j-1}],N^{(\Delta)}_1[k_j]).\end{eqnarray*}
   It remains to show that there exists some constant $K_N>0$ such that $$|\text{cum}(N^{(\Delta)}_{1}[k_1],\ldots,N^{(\Delta)}_1[k_j])|\leq K_N^j
   \quad \text{ for }  k_1,\ldots, k_j\in\{1,\ldots,r\} \text{ and } j\in \N.$$
   Note,  $|\text{cum}(BL_1[k_1],\ldots,BL_1[k_j])| \leq K_L^j$ means that
    the  cumulant generating function $C_{BL_1[k_1],\ldots,BL_1[k_j]}$ of $BL_1[k_1],\ldots,BL_1[k_j]$ satisfies
    $$\left|\left.\frac{\partial^j}{\partial u_{1}\ldots \partial u_{j}}C_{BL_1[k_1],\ldots,BL_1[k_j]}(u_1,\ldots,u_j)\right|_{(u_1,\ldots,u_j)=(0,\ldots,0)}\right|\leq K_L^j.$$
   The cumulant generating function $C_{N_1[k_1],\ldots, N_1[k_j]}$ of  $N_1^{(\Delta)}[k_1],\ldots, N_1^{(\Delta)}[k_j]$ is
 	\begin{eqnarray*}
    C_{N_1[k_1],\ldots, N_1[k_j]}(u_1,\ldots,u_j)&=&\log\left(\E\left[\exp\left(i(u_1N_1^{(\Delta)}[k_1]+\ldots+u_jN_1^{(\Delta)}[k_j])\right)\right]\right)\\
    &=&\int_0^\Delta C_{(BL_1)^\top,\ldots,(BL_1)^\top}\left(u_1(e_{k_1}^\top e^{As})^\top,\ldots,u_j(e_{k_j}^\top e^{As})^\top\right)ds,
 	\end{eqnarray*}
    see \cite{Rajput:Rosinski}, where $e_k\in\R^{r}$ denotes the unit vector which is $1$
    in the $k$-th component  and $0$ otherwise.
 	Accordingly, we have 
    \begin{eqnarray*} \label{eqref13}
 	\lefteqn{\text{cum}(N_1^{(\Delta)}[k_1],\ldots,N_1^{(\Delta)}[k_j])} \nonumber\\
    &&=\left.\frac{\partial^j}{\partial u_{1}\ldots \partial u_{j}}
    \int_0^\Delta C_{(BL_1)^\top,\ldots,(BL_1)^\top}\left(u_1(e_{k_1}^\top e^{As})^\top,\ldots,u_j(e_{k_j}^\top e^{As})^\top\right)ds\right|_{(u_1,\ldots,u_j)=(0,\ldots,0)}.
 	\end{eqnarray*}
   Interchanging differentiation and integration due to dominated convergence 
   yields
 	\begin{align*}
 	|\text{cum}(N_1^{(\Delta)}[k_1],\ldots,N_1^{(\Delta)}[k_j])|\leq \left(K_0
 	\max_{s\in [0,\Delta]}\|e^{As}\|\right)^j(K_L)^{rj},
 	\end{align*}
 	where $K_0$ is a constant which is independent of $j$ and $k_1,\ldots k_j$.
 Therefore, Assumption (2.1) (a) of \cite{dahlhaus1988empirical} is satisfied and the proof of \Cref{Lemma:A} matches the proof of Lemma 2.3 in \cite{dahlhaus1988empirical}.
 \end{proof}

\begin{lemma} \label{Lemma 5.9}
Let $\mathcal G_r\subseteq \mathcal H_r$ be totally bounded and $\widetilde{\mathcal G_r}\subseteq \mathcal G_r$
with $\sup_{g\in  \widetilde{\mathcal G_r}}\|g\|_2<\infty$. Suppose there exists a constant
$K_L>0$ such that the joint cumulant $\text{cum}(BL_1[k_1],\ldots,BL_1[k_j]) \leq K_L^j$ for all  $k_1,\ldots,k_j\in\{1,\ldots,r\}$ and $j\in \N$, and
  $$\int_0^1 [\log(N(\varepsilon,\mathcal G_r,d_2))]^2d\varepsilon<\infty.$$
Let $E_{n,N^{(\Delta)}}^{(M)}$ be defined as in \Cref{Lemma:A}.
Then, there exists a set $B_n$ (independent of $\widetilde {\mathcal G _r}$) with \linebreak $\lim_{n\to\infty}\p(B_n)=1$ and some constants
$c_1,c_2>0$ such that for any $\delta>0$ there exists a $M_0\in\N$
with
\beao \label{A4} \label{A3}
     \P\left(\sup_{g \in\widetilde{\mathcal G_r}}\|E_{n,N^{(\Delta)}}^{(M)}(g)\|>\delta,B_n\right)\leq c_1\exp\left(-c_2\sqrt{\frac{\delta}{\sup_{g\in  \widetilde {\mathcal G_r}}\sqrt{\sum_{M<|h|\leq n}\|\widehat g_h\|^2}}}\right) \quad \forall\, M\geq M_0,\, n\in\N.
\eeao
\end{lemma}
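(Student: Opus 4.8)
The plan is to derive the uniform bound from the pointwise exponential inequality of \Cref{Lemma:A} by a generalized (entropy) chaining argument, in the spirit of the exponential and chaining estimates of \cite{dahlhaus1988empirical}. Two reductions come first. Since $g\mapsto E_{n,N}^{(M)}(g)=\sqrt n\sum_{M<|h|\le n}\widehat g_h\,\overline{\Gamma}_{n,N}(h)$ is linear in $g$ and, by Parseval's identity, depends on $g$ only through $(\widehat g_h)_{M<|h|\le n}$, the natural pseudometric to chain in is $\rho_{M,n}(g,g'):=\big(\sum_{M<|h|\le n}\|\widehat{(g-g')}_h\|^2\big)^{1/2}\le d_2(g,g')$, and \Cref{Lemma:A} applied to the difference $g-g'$ gives $\P(\|E_{n,N}^{(M)}(g)-E_{n,N}^{(M)}(g')\|>\eta)\le c_1\exp\big(-c_2\sqrt{\eta/\rho_{M,n}(g,g')}\big)$. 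Writing $R:=\sup_{g\in\widetilde{\mathcal G_N}}\big(\sum_{M<|h|\le n}\|\widehat g_h\|^2\big)^{1/2}$, one has $\rho_{M,n}(g,g')\le 2R$ for all $g,g'\in\widetilde{\mathcal G_N}$; moreover, covering $\mathcal G_N$ by finitely many $d_2$-balls and letting $M\to\infty$ shows that $R\le\sup_{g\in\mathcal G_N}\|g-g_M\|_2\to 0$ as $M\to\infty$, uniformly in $n$, where $g_M(\omega):=\sum_{|h|\le M}\widehat g_h\e^{ih\omega}$. This uniform vanishing is what produces the threshold $M_0$ in the statement.

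For the good event, I would note that $\E[n\|\overline{\Gamma}_{n,N}(h)\|^2]\le\mathfrak{C}$ for $h\neq 0$, so $\E\sum_{1\le|h|\le n}\|\overline{\Gamma}_{n,N}(h)\|^2\le 2\mathfrak{C}$, and hence for a slowly increasing sequence $L_n\to\infty$ the set $B_n:=\big\{\sum_{1\le|h|\le n}\|\overline{\Gamma}_{n,N}(h)\|^2\le L_n\big\}$ satisfies $\P(B_n)\to 1$ by Markov's inequality and does not depend on $\widetilde{\mathcal G_N}$. On $B_n$, Cauchy--Schwarz in $h$ yields the deterministic Lipschitz estimate $\|E_{n,N}^{(M)}(g)-E_{n,N}^{(M)}(g')\|\le\sqrt{nL_n}\,\rho_{M,n}(g,g')$, which lets me stop the chain at the finest scale $\varepsilon_\ast\asymp\delta/\sqrt{nL_n}$, the un-chained remainder being deterministically at most $\delta/2$ on $B_n$ and the number of chaining levels remaining finite. (Alternatively, if one follows the truncation used in the proof of \Cref{Lemma:A} literally, $B_n$ may simply be taken to be the event that the truncation is inactive, which again has probability tending to $1$ by the exponential moments coming from the cumulant bound.)

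For the chaining itself, fix $g_\ast\in\widetilde{\mathcal G_N}$ and, for $j\ge 1$, minimal $d_2$-nets $\widetilde{\mathcal G}^{(j)}\subseteq\widetilde{\mathcal G_N}$ of radius $\varepsilon_j=2^{-j}\operatorname{diam}_{d_2}(\widetilde{\mathcal G_N})$, with $|\widetilde{\mathcal G}^{(j)}|\le N(\varepsilon_j/2,\mathcal G_N,d_2)$ and nearest-point maps $\pi_j$; telescoping gives $E_{n,N}^{(M)}(g)=E_{n,N}^{(M)}(g_\ast)+\sum_{j\ge 1}\big(E_{n,N}^{(M)}(\pi_j g)-E_{n,N}^{(M)}(\pi_{j-1}g)\big)$ down to level $\varepsilon_\ast$. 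The base term is controlled by \Cref{Lemma:A} with $\rho_{M,n}(g_\ast,0)\le R$. For a link at level $j$ one has $\rho_{M,n}(\pi_j g,\pi_{j-1}g)\le\min(2\varepsilon_{j-1},2R)$, so a union bound over at most $N(\varepsilon_j/2,\mathcal G_N,d_2)^2$ links together with \Cref{Lemma:A}, under a budget split $\delta=\delta/2+\big(\delta_0+\sum_j\eta_j\big)$ (the first $\delta/2$ for the deterministic remainder on $B_n$) in which $\eta_j$ is taken large enough that $c_2\sqrt{\eta_j/\min(2\varepsilon_{j-1},2R)}$ dominates $2\log N(\varepsilon_j/2,\mathcal G_N,d_2)+j$, makes each level's contribution summable in $j$. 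The required excess budget $\sum_j\eta_j$ is then of size $R$ plus $\int_0^{cR}[\log N(\varepsilon,\mathcal G_N,d_2)]^2\,d\varepsilon$, which tends to $0$ as $M\to\infty$ by the finiteness of $\int_0^1[\log N(\varepsilon,\mathcal G_N,d_2)]^2\,d\varepsilon$; hence, for $M\ge M_0$, the whole bound is dominated by the base term $c_1\exp\big(-c_2\sqrt{\delta/R}\big)$, and relabelling the constants completes the argument.

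The hard part will be the last step — the bookkeeping of the chaining budget so that the final estimate carries exactly the stretched-exponential form in the raw $\ell^2$-radius $R=\sup_{g\in\widetilde{\mathcal G_N}}\big(\sum_{M<|h|\le n}\|\widehat g_h\|^2\big)^{1/2}$, with constants $c_1,c_2$ independent of $\delta$, $M$, $n$ and $\widetilde{\mathcal G_N}$. It is precisely here that both hypotheses are used essentially: total boundedness of $\mathcal G_N$ forces $R\to 0$ as $M\to\infty$, while finiteness of the $[\log N]^2$-entropy integral guarantees that the entropy contribution of the chain is only a vanishing perturbation of the base term; the estimates then run in parallel to the chaining argument of \cite{dahlhaus1988empirical}.
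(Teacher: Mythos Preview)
Your proposal is correct and follows the same route as the paper: the paper's own proof consists of a single sentence referring to the proof of inequality (28) in \cite{dahlhaus2009empirical}, with the pointwise exponential bound \eqref{A2} of \Cref{Lemma:A} as input and the observation $\lim_{M\to\infty}\sup_{g\in\widetilde{\mathcal G_N}}\sum_{|h|>M}\|\widehat g_h\|^2=0$ (from total boundedness) providing the threshold $M_0$. You have simply unpacked what that chaining argument looks like in the present setting---the linear dependence on $(\widehat g_h)_{M<|h|\le n}$, the pseudometric $\rho_{M,n}\le d_2$, a good event $B_n$ on which the finest-scale remainder is controlled deterministically, and the $[\log N]^2$-entropy budget---so there is nothing materially different between your sketch and what the paper intends.
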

\begin{proof}
The proof  goes in the same way as the proof of inequality (28)  in \cite{dahlhaus2009empirical} using \eqref{A2}
and $\lim_{M\to\infty}\sup_{g \in\widetilde{\mathcal G_r}}\sum_{|h|>M}\|\widehat g_h\|^2=0$.
\end{proof}

 \noindent\textit{Proof of \Cref{tightness::N3}.}
The proof of \eqref{unibound} and  \eqref{unibound1}, respectively follow directly from \Cref{Lemma 5.9} since $\lim_{n\to\infty}\P(B_n)=1$.
\hfill$\Box$


 \begin{lemma}\label{tightness::N2}~\\
 	Let Assumption $A$ and  Assumption $(B3)$ hold.
       The sequence $(E_{n,N^{(\Delta)}})_{n\in \N}$ is tight in $(\mathcal{C}(\mathcal G_{r},\C^{r\times r}),
 d_{\mathcal G_{r}}).$
 \end{lemma}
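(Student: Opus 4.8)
The plan is to argue as for \Cref{tightness::N1} and \Cref{tightness::N3}. Under Assumption $(N3)$ the index set $\mathcal G_N$ is totally bounded (its covering numbers satisfy $N(\varepsilon,\mathcal G_N,d_2)\le\mathfrak{C}\varepsilon^{-1}$), so $(\mathcal C(\mathcal G_N,\C^{N\times N}),d_{\mathcal G_N})$ is a separable Banach space, and $\sup_{g\in\mathcal G_N}\|g\|_2\le\|h\|_2<\infty$. Hence, by the reasoning preceding \Cref{tightness::N1} (in particular \Cref{aux2}), it suffices to establish the flat concentration bound \eqref{unibound}, that is
\[
\lim_{M\to\infty}\limsup_{n\to\infty}\P\Big(\sup_{g\in\mathcal G_N}\big\|E_{n,N}^{(M)}(g)\big\|>\delta\Big)=0\qquad\text{for every }\delta>0,
\]
with $E_{n,N}^{(M)}$ as in \Cref{Lemma:A}. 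I would treat the first function space in $(N3)$, $\mathcal G_N=\{g_t:=h(\cdot)\1_{[0,t]}(\cdot):t\in[0,\pi]\}$; the second is analogous. Then $t\mapsto g_t$ is $d_2$-continuous, $g_0=0$ (hence $E_{n,N}^{(M)}(g_0)=0$), the process $t\mapsto E_{n,N}^{(M)}(g_t)$ is a.s.\ continuous, and $E_{n,N}^{(M)}(g_t)-E_{n,N}^{(M)}(g_s)=E_{n,N}^{(M)}(h\1_{(s,t]})$ for $0\le s\le t\le\pi$. Writing $c_\ell=c_\ell(s,t)$ for the $\ell$-th Fourier coefficient of $h\1_{(s,t]}$, the bound $\|c_\ell\|\le\mathfrak{C}(t-s)$ is trivial, and integration by parts — using that $h$ is $2\pi$-periodic and continuously differentiable, so $h$ and $h'$ are bounded — gives $\|c_\ell\|\le\mathfrak{C}/|\ell|$ for $\ell\ne0$; therefore
\[
\sigma_{M,n}^2(s,t):=\sum_{M<|\ell|\le n}\|c_\ell\|^2\le\mathfrak{C}\,\big((t-s)\wedge M^{-1}\big),
\]
with $\mathfrak{C}$ independent of $n,M,s,t$ (split the sum at $|\ell|\sim(t-s)^{-1}$, and use $\sum_{|\ell|>M}\ell^{-2}\le\mathfrak{C}M^{-1}$).

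The crucial step is a fourth moment bound for these increments. Since $M\ge1$, the diagonal is absent in the representation $E_{n,N}^{(M)}(h\1_{(s,t]})=\tfrac1{\sqrt n}\sum_{1\le j,k\le n,\,|j-k|>M}c_{j-k}\,N_j^{(\Delta)}N_k^{(\Delta)\top}$, so this is a \emph{purely off-diagonal} bilinear form in the i.i.d.\ sequence $(N_k^{(\Delta)})_{k\in\Z}$. Consequently, when $\E\|E_{n,N}^{(M)}(h\1_{(s,t]})\|^4$ is expanded into joint cumulants of the eight entries of $N_\cdot^{(\Delta)}$ that appear, every block of the partition consists of entries with a common time index, and — since within each of the four bilinear factors the two noise vectors carry distinct time indices, so any given time index is carried by at most one entry per factor — no block contains more than four entries; thus only joint cumulants of $N_1^{(\Delta)}$ of order $\le4$ enter, all of which are finite by Assumption $A$, which yields $\E\|N_1^{(\Delta)}\|^4<\infty$. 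Performing the (lengthy but routine) bookkeeping of this expansion and estimating the resulting index sums by means of the Fourier bounds above, one obtains a constant $\mathfrak{C}$, independent of $n,M,s,t$, with
\begin{equation}\label{fourthmoment}
\E\big\|E_{n,N}^{(M)}(g_t)-E_{n,N}^{(M)}(g_s)\big\|^4\le\mathfrak{C}\,\sigma_{M,n}^4(s,t)\le\mathfrak{C}\,\big((t-s)\wedge M^{-1}\big)^2 .
\end{equation}

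Given \eqref{fourthmoment}, the proof is completed by a standard $L^4$-chaining (Dudley entropy) argument, applied to the Banach-space valued process $t\mapsto E_{n,N}^{(M)}(g_t)$ (equivalently, coordinatewise on $\C^{N\times N}$). The pseudometric $\rho_{M,n}(s,t):=(\E\|E_{n,N}^{(M)}(g_t)-E_{n,N}^{(M)}(g_s)\|^4)^{1/4}\le\mathfrak{C}\big((t-s)^{1/2}\wedge M^{-1/2}\big)$ on $[0,\pi]$ has diameter at most $\mathfrak{C}M^{-1/2}$ and covering numbers $N(\varepsilon,[0,\pi],\rho_{M,n})\le\mathfrak{C}\varepsilon^{-2}$; since $E_{n,N}^{(M)}(g_0)=0$ this gives, uniformly in $n$,
\[
\E\sup_{t\in[0,\pi]}\big\|E_{n,N}^{(M)}(g_t)\big\|\le\mathfrak{C}\int_0^{\mathfrak{C}M^{-1/2}}N(\varepsilon,[0,\pi],\rho_{M,n})^{1/4}\,d\varepsilon\le\mathfrak{C}\int_0^{\mathfrak{C}M^{-1/2}}\varepsilon^{-1/2}\,d\varepsilon\le\mathfrak{C}\,M^{-1/4}.
\]
By Markov's inequality, $\limsup_{n\to\infty}\P(\sup_{t\in[0,\pi]}\|E_{n,N}^{(M)}(g_t)\|>\delta)\le\mathfrak{C}M^{-1/4}/\delta\to0$ as $M\to\infty$, which is precisely \eqref{unibound}; hence $(E_{n,N})_{n\in\N}$ is flatly concentrated and therefore tight in $(\mathcal C(\mathcal G_N,\C^{N\times N}),d_{\mathcal G_N})$.

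The main obstacle is the fourth moment bound \eqref{fourthmoment}. Two delicate points arise: first, the observation that the bilinear form is off-diagonal for $M\ge1$, which confines the cumulant expansion to orders $\le4$ and keeps everything within the finite fourth moment assumption — this is the reason the diagonal term must be separated out; second, organizing the partition/cumulant expansion so that the estimate emerges as $\mathfrak{C}\,\sigma_{M,n}^4(s,t)$ with a constant free of $n,M,s,t$. The chaining step is then routine, but note that a mere second moment bound would not suffice: the corresponding entropy integral $\int_0\varepsilon^{-1}\,d\varepsilon$ diverges, which is exactly why the finite fourth moment hypothesis is essential here.
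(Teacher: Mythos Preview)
Your approach is correct and genuinely different from the paper's. The paper does not attempt a direct moment/chaining argument on $\mathcal G_N$; instead it first proves flat concentration for the simpler class $\mathcal G_N^F=\{I_N\1_{[-\pi,t]}(\cdot):t\in[-\pi,\pi]\}$ by adapting the maximal-inequality technique of Kl\"uppelberg--Mikosch (1996, Theorem~3.2), upgrades this (together with \Cref{fidiCon}) to weak convergence on $\mathcal G_N^F$, and only then passes to $\mathcal G_N=\{h(\cdot)\1_{[0,t]}(\cdot):t\}$ via integration by parts and the continuous mapping theorem. Your route---a fourth-moment increment bound plus an $L^4$ Dudley entropy integral---is more self-contained and even yields an explicit rate $\E\sup_t\|E_{n,N}^{(M)}(g_t)\|\le \mathfrak{C}M^{-1/4}$, whereas the paper's argument reuses existing machinery and reduces the general $h$ to the indicator case.

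One point deserves a sharper justification than ``routine bookkeeping \ldots\ by means of the Fourier bounds above''. The pointwise estimates $\|c_\ell\|\le\mathfrak{C}(t-s)$ and $\|c_\ell\|\le\mathfrak{C}/|\ell|$ alone do \emph{not} suffice to control every term in the cumulant expansion of $\E\|E_{n,N}^{(M)}(h\1_{(s,t]})\|^4$. The ``connected'' $\{2,2,2,2\}$-patterns (four distinct time indices forming a cycle, e.g.\ $(a,b),(b,c),(c,d),(d,a)$) produce index sums of the type $n\sum_{u,v,w}|c_u||c_v||c_w||c_{u+v+w}|$; applying Cauchy--Schwarz in $w$ gives $\le n\tau^2\|c\|_{\ell^1}^2$, and the crucial step is the trivial but essential bound $\|c\|_{\ell^1}=\sum_{M<|\ell|\le n}\|c_\ell\|\le\sqrt{2n}\,\tau$ (Cauchy--Schwarz over at most $2n$ nonzero terms), which yields $\le 2n^2\tau^4$ and hence a contribution $\le \mathfrak{C}\sigma_{M,n}^4(s,t)$ after dividing by $n^2$. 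All remaining partition types ($\{2,2,4\},\{2,3,3\},\{4,4\}$) have fewer free indices and are easily seen to be of smaller order. With this observation your bound \eqref{fourthmoment} holds with a constant independent of $n,M,s,t$, and the chaining goes through exactly as you describe; the $L^4$ entropy bound you invoke is standard (e.g.\ van der Vaart--Wellner, Corollary~2.2.5, with $\psi(x)=x^4$).
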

 \begin{proof} $\mbox{}$
 Following the lines of the proof of Theorem 3.2 in \cite{Kluppelberg:1996}, but without using the contraction principle on p.~1875, and
 using the Markov inequality  on p.~1876 with $\mu=2$ instead of Theorem 6.9.4 of \cite{Kwapien},  we obtain
 \beao
     \lim_{M\to\infty}\limsup_{n\to\infty}\P\left(\sup_{t\in[-\pi,\pi]}\|E_{n,N^{(\Delta)}}^{(M)}(Id_r\1_{\left[-\pi,t\right]}(\cdot))\|>\delta\right)=0
 \eeao
 (see as well the arguments in \cite{Kokoszka:Mikosch}, proof of Theorem~5.1). That results in the flat concentration
 condition on  $(\mathcal{C}(\mathcal G_{r}^F,\C^{r\times r})$
 with ${\mathcal{G}}_r^F:=\{Id_r\1_{\left[-\pi,t\right]}(\cdot):t\in[-\pi,\pi]\}$.
 Due to the convergence of the finite-dimensional distribution in \Cref{fidiCon} and the flat concentration
 on  $(\mathcal{C}(\mathcal G_{r}^F,\C^{r\times r})$ we receive
 the weak convergence as $n\to\infty$,
 \begin{align*}
    (E_{n,N^{(\Delta)}}(g))_{g\in{\mathcal G}_r^F}
 	      \overset{\mathcal D}{\longrightarrow}
    (E_{N}(g))_{g\in{\mathcal G}_r^F}\quad \text{ in } (\mathcal{C}({\mathcal G}_r^F,\C^{r\times r}),d_{ {\mathcal G}_r^F}).
 \end{align*}
 Since $h\in\mathcal{H}_r$ is continuously differentiable on the interior of its support, partial integration and the continuous mapping theorem result in
 the weak convergence as $n\to\infty$,
 \begin{align*}
    (E_{n,N^{(\Delta)}}(g))_{g\in{\mathcal G}_r}
 	      \overset{\mathcal D}{\longrightarrow}
    (E_N(g))_{g\in{\mathcal G}_r}\quad \text{ in } (\mathcal{C}({\mathcal G}_r,\C^{r\times r}),d_{{\mathcal G}_r}),
 \end{align*}
 and in particular, the tightness.
 \end{proof}

\subsection{Proof of \Cref{Satz::intP::Y2}} \label{Section:5.2}

 To deduce \Cref{Satz::intP::Y} from \Cref{Satz::intP::N}, we have to check that the error
  $$E_{n,R}(g)=\sqrt n \int_{-\pi}^{\pi}\tr\left(g(\omega)\left(I_{n,Y^{(\Delta)}}(\omega)-\Phi(e^{-i\omega})I_{n,N^{(\Delta)}}(\omega)\Phi(e^{i\omega})^\top \right)\right)  d\omega ,$$
  which is made by approximating the empirical spectral process $\tr\left(E_{n}(g)\right)$ by the empirical spectral process $\tr(E_{n,N^{(\Delta)}}(g^\Phi))$ is sufficiently small.
 \begin{lemma}\label{ApproxPerWN}~\\
    Let Assumption $A$ hold. Let $\mathcal G_m\subseteq \mathcal H_m$  be totally bounded.
 \begin{itemize}
 	\item[(a)] Suppose $\sup_{g\in\mathcal G_m}\|g\|_2<\infty$.  Then,
 	$\|E_{n,R}\|_{\mathcal{G}_m}\overset{\P}{\longrightarrow}0$ as $n\to\infty$.
    \item[(b)] Suppose $\mathcal G_m$ is a Hilbert space. Then, $\|E_{n,R}\|^{\Phi,s}_{\mathcal G_m^\prime}\overset{\P}{\longrightarrow}0$ as $n\to\infty$.
 \end{itemize}
 \end{lemma}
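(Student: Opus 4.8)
The plan is to decompose $E_{n,R}(g)$ via the transfer-function expansion of the finite Fourier transform, extract a ``diagonal'' piece that only produces a mean of order $n^{-1/2}$, and bound the rest; crucially, the whole estimate is carried out at the level of Fourier coefficients so that the cancellation inside the frequency integral is not destroyed prematurely. First I would write $d_{n,Y}(\omega)=\sum_{j=1}^nY_j^{(\Delta)}e^{-ij\omega}$ and $d_{n,N}(\omega)=\sum_{j=1}^nN_j^{(\Delta)}e^{-ij\omega}$, so that $I_{n,Y}(\omega)=\frac{1}{2\pi n}d_{n,Y}(\omega)d_{n,Y}(\omega)^H$ and, since $\Phi(e^{i\omega})^\top=\Phi(e^{-i\omega})^H$, also $\Phi(e^{-i\omega})I_{n,N}(\omega)\Phi(e^{i\omega})^\top=\frac{1}{2\pi n}\big(\Phi(e^{-i\omega})d_{n,N}(\omega)\big)\big(\Phi(e^{-i\omega})d_{n,N}(\omega)\big)^H$. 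Plugging $Y_j^{(\Delta)}=\sum_{r\geq0}\Phi_rN_{j-r}^{(\Delta)}$ into $d_{n,Y}$ and shifting the summation index gives $d_{n,Y}(\omega)=\Phi(e^{-i\omega})d_{n,N}(\omega)+R_n(\omega)$ with
\[
R_n(\omega)=\sum_{r\geq1}\Phi_re^{-ir\omega}\Big(\sum_{k=1-r}^{0}N_k^{(\Delta)}e^{-ik\omega}-\sum_{k=n-r+1}^{n}N_k^{(\Delta)}e^{-ik\omega}\Big),
\]
the $r>n$ terms being negligible by the exponential decay of $\Phi_r$. Since $(N_k^{(\Delta)})_{k\in\Z}$ is i.i.d.\ with $\E\|N_1^{(\Delta)}\|^4<\infty$ under Assumption~$A$ and $\sum_{r\geq1}\sqrt r\,\|\Phi_r\|<\infty$ by \eqref{PhiBound}, Minkowski's inequality yields $\sup_n\sup_\omega\E\|R_n(\omega)\|^4\leq\mathfrak C$. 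Expanding $d_{n,Y}d_{n,Y}^H$ then gives
\[
I_{n,Y}(\omega)-\Phi(e^{-i\omega})I_{n,N}(\omega)\Phi(e^{i\omega})^\top=\tfrac{1}{2\pi n}\big(\Phi(e^{-i\omega})d_{n,N}(\omega)R_n(\omega)^H+R_n(\omega)d_{n,N}(\omega)^H\Phi(e^{-i\omega})^H+R_n(\omega)R_n(\omega)^H\big),
\]
hence $E_{n,R}(g)=T_{1,n}(g)+T_{2,n}(g)+T_{3,n}(g)$, where $T_{3,n}(g)=\frac{1}{2\pi\sqrt n}\int_{-\pi}^{\pi}\tr\big(g(\omega)R_n(\omega)R_n(\omega)^H\big)\,d\omega$ is the quadratic term and $T_{1,n},T_{2,n}$ are the cross terms; all three are linear in $g$.

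The quadratic term is straightforward: $|T_{3,n}(g)|\leq n^{-1/2}\|g\|_2\big(\int_{-\pi}^{\pi}\|R_n(\omega)\|^4d\omega\big)^{1/2}$ and $\E\int_{-\pi}^{\pi}\|R_n\|^4d\omega=O(1)$, so $\sup_{g\in\mathcal G_m}|T_{3,n}(g)|=O_\P(n^{-1/2})$ under $\sup_g\|g\|_2<\infty$, while for (b) I bound $\|g\|_2$ on $\{\,g:\|g\|_{\Phi,s}\leq1\,\}$ using \Cref{corollary:aux}, giving $\|T_{3,n}\|_{\mathcal G_m'}^{\Phi,s}=O_\P(n^{-1/2})$.

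The cross terms are the crux. Writing $g(\cdot)\Phi(e^{-i\cdot})=\sum_\ell\reallywidehat{(g(\cdot)\Phi(e^{-i\cdot}))}_\ell e^{i\ell\cdot}$, inserting the Fourier expansions of $R_n$, $d_{n,N}$ and $g(\cdot)\Phi(e^{-i\cdot})$ into $T_{1,n}(g)=\frac{1}{2\pi\sqrt n}\int_{-\pi}^\pi R_n(\omega)^Hg(\omega)\Phi(e^{-i\omega})d_{n,N}(\omega)\,d\omega$ and integrating over $[-\pi,\pi]$ collapses it to $T_{1,n}(g)=\sum_\ell\tr\big(\reallywidehat{(g(\cdot)\Phi(e^{-i\cdot}))}_\ell\,Z_{n,\ell}\big)$, where $Z_{n,\ell}=\frac{1}{\sqrt n}\sum N_j^{(\Delta)}N_k^{(\Delta)\top}\Phi_r^\top$, the sum over $1\leq j\leq n$, $r\geq1$, $k\in\{1-r,\dots,0\}\cup\{n-r+1,\dots,n\}$, $j-k-r=\ell$. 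A direct second-moment computation — using that the index sets $\{1,\dots,n\}$ and $\{k\leq0\}$ are disjoint (so the left-edge part of $Z_{n,\ell}$ is centered), and that in the right-edge part a coincidence $j=k$ forces $\ell=-r$ and contributes only a drift of size $n^{-1/2}r\|\Phi_r\|$, together with $\sum_r\sqrt r\,\|\Phi_r\|<\infty$ — yields $\E\|Z_{n,\ell}\|^2\leq\mathfrak C/n$ for $0\leq\ell<n$ (with exponentially small bound for $\ell<0$, and $Z_{n,\ell}=0$ for $\ell\geq n$) and, more finely, $\E|T_{1,n}(g)|^2\leq\frac{\mathfrak C}{n}\sum_\ell\|\reallywidehat{(g(\cdot)\Phi(e^{-i\cdot}))}_\ell\|^2$. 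By Parseval and $\sup_\omega\|\Phi(e^{-i\omega})\|<\infty$ (as in the derivation of \eqref{FK_absch}), $\sup_{g\in\mathcal G_m}\sum_\ell\|\reallywidehat{(g(\cdot)\Phi(e^{-i\cdot}))}_\ell\|^2<\infty$ and $\sum_\ell\|\reallywidehat{((f-g)(\cdot)\Phi(e^{-i\cdot}))}_\ell\|^2\leq\mathfrak C\,d_2(f,g)^2$. Hence for (a) the map $g\mapsto T_{i,n}(g)$ is $d_2$-Lipschitz with (random) Lipschitz constant $\big(\sum_\ell\|Z_{n,\ell}\|^2\big)^{1/2}\mathfrak C=O_\P(1)$ (because $\E\sum_\ell\|Z_{n,\ell}\|^2=O(1)$), while $T_{i,n}(g)\overset{\P}{\longrightarrow}0$ for each fixed $g$; covering the totally bounded set $\mathcal G_m$ by finitely many $d_2$-balls then upgrades this to $\sup_{g\in\mathcal G_m}|T_{i,n}(g)|\overset{\P}{\longrightarrow}0$. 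For (b), where the relevant $s$ satisfies $s>1/2$, \Cref{lemma:aux1}(a) together with \eqref{PhiBound} shows that $\|g\|_{\Phi,s}\leq1$ implies $\sum_\ell(1+|\ell|)^{2s}\|\reallywidehat{(g(\cdot)\Phi(e^{-i\cdot}))}_\ell\|^2\leq\mathfrak C$, whence by Cauchy--Schwarz $\|T_{1,n}\|_{\mathcal G_m'}^{\Phi,s}\leq\mathfrak C\big(\sum_\ell(1+|\ell|)^{-2s}\|Z_{n,\ell}\|^2\big)^{1/2}$, whose square has expectation at most $\frac{\mathfrak C}{n}\sum_\ell(1+|\ell|)^{-2s}=O(n^{-1})\to0$. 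The adjoint-type term $T_{2,n}$ is estimated by a completely analogous Fourier-coefficient computation, and collecting the three bounds proves (a) and (b).

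The step I expect to be the main obstacle is the treatment of the cross terms. A naive pointwise bound $|R_n(\omega)^Hg(\omega)\Phi(e^{-i\omega})d_{n,N}(\omega)|\leq\|R_n(\omega)\|\,\|g(\omega)\Phi(e^{-i\omega})\|\,\|d_{n,N}(\omega)\|$ followed by Cauchy--Schwarz in $\omega$ only produces $O_\P(1)$, because $\|d_{n,N}(\omega)\|$ is of size $\sqrt n$; one must keep the oscillatory frequency integral, pass to Fourier coefficients, and isolate the $j=k$ ``diagonal'' contributions, which are the only ones of order $\sqrt n$ and which — crucially — stem solely from the right-edge block of $R_n$ and there yield merely an $O(n^{-1/2})$ mean. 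Turning the resulting pointwise estimates into the uniform statement in (a) (via linearity, the $d_2$-Lipschitz bound and the total boundedness of $\mathcal G_m$) and into the dual-norm statement in (b) (via the weighted Cauchy--Schwarz, where $\sum_\ell(1+|\ell|)^{-2s}<\infty$ is exactly what makes the $g$-independent bound converge) is the second technical ingredient; it is also the reason the dual-space statement needs nothing beyond $s>1/2$, whereas the $\mathcal C(\mathcal G_m,\C)$ statement genuinely uses the total boundedness of $\mathcal G_m$.
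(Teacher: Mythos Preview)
Your proof of part (a) is essentially correct and close in spirit to the paper's argument. You organize the remainder via $d_{n,Y}=\Phi(e^{-i\omega})d_{n,N}+R_n$ into three pieces, whereas the paper expands the difference $I_{n,Y}-\Phi I_{n,N}\Phi^\top$ directly into eight terms $R_n^{(1)},\ldots,R_n^{(8)}$ (reducible by symmetry to two representative cases $i=1,2$); your decomposition is arguably cleaner but the two are equivalent. Both proofs finish the uniform bound the same way: show pointwise convergence for each fixed $g$, exhibit a random $d_2$-Lipschitz constant that is $O_\P(1)$ (the paper uses $(n\int\|R_n^{(2)}\|^2d\omega)^{1/2}$, you use $(\sum_\ell\|Z_{n,\ell}\|^2)^{1/2}$), and then invoke the total boundedness of $\mathcal G_m$ to pass from a finite cover to the supremum.

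There is, however, a genuine gap in your treatment of (b). You write ``for (b), where the relevant $s$ satisfies $s>1/2$'', but the lemma imposes no such restriction: it is stated for whatever $s\geq 0$ enters the definition of $\|\cdot\|^{\Phi,s}_{\mathcal G_m'}$, and the paper uses part (b) in the proof of \Cref{Satz::intP::Y} under Assumption (B2), where $s=0$. In that case your weighted Cauchy--Schwarz bound $\|T_{1,n}\|^{\Phi,s}_{\mathcal G_m'}\leq\mathfrak C\big(\sum_\ell(1+|\ell|)^{-2s}\|Z_{n,\ell}\|^2\big)^{1/2}$ is useless, since $\sum_\ell(1+|\ell|)^{0}=\infty$ and the expectation of the right-hand side stays $O(1)$ rather than tending to $0$. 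The paper's fix is much simpler than your route: the unit ball $\{g\in\mathcal G_m:\|g\|_{\Phi,s}\leq 1\}$ is a subset of the totally bounded set $\mathcal G_m$, hence itself totally bounded in $d_2$, and \Cref{corollary:aux} gives a uniform bound on $\|g\|_2$ over this ball; one can then run the proof of (a) verbatim with $\sup_{g\in\mathcal G_m}$ replaced by $\sup_{g\in\mathcal G_m,\,\|g\|_{\Phi,s}\leq 1}$. Your $s>1/2$ argument is not wasted --- it is essentially what the paper does later in the proof of \Cref{Theorem:main3}, where total boundedness is \emph{not} available --- but it does not suffice for the lemma as stated.
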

 \begin{proof} $\mbox{}$\\
(a) \,  Define $R_n(\omega)=I_{n,Y^{(\Delta)}}(\omega)-\Phi(e^{-i\omega})I_{n,N^{(\Delta)}}(\omega) \Phi(e^{i\omega})^\top$ for $\omega \in [-\pi,\pi]$.
Due to \Cref{SSPsampledNat} we get
\begin{align*}
 	R_n(\omega)
 	&=\frac{1}{2\pi n}\left(\sum_{k=1}^{n}\sum_{p=0}^{\infty} \Phi_p N_{k-p}\D \right)\left(\sum_{\ell=1}^{n}\sum_{t=0}^{\infty}\Phi_tN_{\ell-t}\D \right)^\top e^{-i(k-\ell)\omega}\\
 	&\quad \left. -\frac{1}{2\pi n}\sum_{k=1}^{n}\sum_{p=0}^{\infty}\Phi_pN_k\D \right)\left(\sum_{\ell=1}^{n}\sum_{t=0}^{\infty}\Phi_tN_\ell\D \right)^\top e^{-i(k+p-\ell-t) \omega} \\
 	&=\frac{1}{2\pi n} \sum_{p=0}^{\infty}\sum_{t=0}^{\infty} \Phi_p  \left( \sum_{k=1-p}^{0}\sum_{\ell=1-t}^{0} -\sum_{k=1}^{n}\sum_{\ell=n-t+1}^{n}+\sum_{k=1-p}^{0}\sum_{\ell=1}^{n}-\sum_{k=1-p}^{0}\sum_{\ell=n-t+1}^{n} \right.   \\
 	& \quad \quad \quad  \  \left. +\sum_{k=n-p+1}^{n}\sum_{\ell=n-t+1}^{n}- \sum_{k=n-p+1}^{n}\sum_{\ell=1}^{n}+\sum_{k=1}^{n}\sum_{\ell=1-t}^{0}-\sum_{k=n-p+1}^{n}\sum_{\ell=1-t}^{0}\right)\\
 	&\quad\quad \quad \quad  \ \ \  N_k\D N_\ell^{(\Delta)\top}e^{-i\omega(k+p-\ell-t)}\Phi_t^\top \\
 	&=:\sum_{i=1}^{8}R_n^{(i)}(\omega).
 	\end{align*}
 	Thus, we show that
 	\beao
 	\sup_{\substack{g\in \mathcal G_{m}}}\left\| \sqrt n \int_{-\pi}^{\pi}g(\omega)R_n^{(i)}(\omega)d\omega\right\| \overset{\P}{\longrightarrow}0, \quad i=1,\ldots,8,
 	\eeao
 	holds.
 	By symmetry, the proofs for $ i=6, 7,8$ are the same as those for $i= 2, 3, 4,$ respectively.
 	Note that the proofs for $i=4,5$ are based on the same ideas as the proof for $i=1$; the case $i=3$ goes very similar to the case  $i=2$.  Consequently, we only investigate the terms corresponding to $i=1,2$.

 	As a first step we consider the case  \underline{$i=1$}:
 \begin{eqnarray} \label{eqref14}
 		\lefteqn{\sup_{g\in \mathcal G_{m}}\left\| n \int_{-\pi}^{\pi}g(\omega)R_n^{(1)}(\omega)d\omega\right\|}\nonumber\\
 		&=&\sup_{g\in \mathcal G_m} \left\|\sum_{p=0}^{\infty}\sum_{t=0}^\infty \sum_{\ell =1-t}^{0}\sum_{u=-\ell-t+1}^{-\ell-t+p}  \widehat g_u\Phi_p N_{\ell+t+u-p}^{(\Delta)}N_\ell^{(\Delta)\top}\Phi_t^\top \right\|  \nonumber\\
 		&\leq& \sup_{g\in \mathcal G_m}\sum_{p=0}^{\infty}\sum_{t=0}^\infty \sum_{\ell =1-t}^{0} \sum_{u=1}^{p} \|\Phi_p\| \|N_{u-p}^{(\Delta)}\| \|N_\ell^{(\Delta)}\| \|\Phi_t\|  \|\widehat g_{u-\ell-t}\|  \nonumber\\
 		&\leq& \sup_{g\in \mathcal G_m} \sum_{p=0}^{\infty}\sum_{t=0}^\infty \sum_{u=1}^{p} \|\Phi_p\|  \|\Phi_t\|\|N_{u-p}^{(\Delta)}\| \sqrt{\left(\sum_{\ell =1-t}^{0}  \|N_{\ell}^{(\Delta)}\|^2\right) \left(\sum_{\ell =1-t}^{0}  \|\widehat g_{u-\ell-t}\|^2 \right) }  \nonumber\\
 		&\leq& \mathfrak C\sum_{p=0}^{\infty} \|\Phi_p\| (p+1) \sum_{t=0}^\infty \sqrt {t+1} \|\Phi_t\| \left(\frac 1 {1+p}\sum_{u=1-p}^{0}  \|N_{u}^{(\Delta)}\|\right)\sqrt{\frac 1 {t+1}\sum_{\ell =1-t}^{0}  \|N_{\ell}^{(\Delta)}\|^2 }, \nonumber\\
 	\end{eqnarray}
 	where we used that $\sup_{\substack{g\in \mathcal G_m}}\|g\|_2^2 <\infty$. Note that $\sum_{p=0}^\infty (1+p)\|\Phi_p\|$ and $\sum_{t=0}^\infty \sqrt {t+1} \|\Phi_t\|$
 are finite due to \eqref{PhiBound}, and that due to the strong law of large numbers $$\frac{1}{1+p}\sum_{u=1-p}^0 \|N_u^{(\Delta)}\| \overset{a.s.}{\to} \E\|N_1^{(\Delta)}\|,\quad \text{ as } p\to\infty.$$
 	Thus, the right hand side of \eqref{eqref14} is a.s. finite
    and
 $$\sup_{g\in \mathcal G_m} \left\| \sqrt n \int_{-\pi}^{\pi}g(\omega)R_n^{(1)}(\omega)d\omega\right\|\overset{\P}{\longrightarrow}0,\quad \text{ as } n\to \infty.$$

 	Next, we investigate \underline{$i=2$}:
 	Since the space $(\mathcal G_m,d_2)$ is totally bounded, we can approximate the supremum over the potentially uncountable many functions in $\mathcal G_m$ by
  a maximum of finitely many functions. Namely, let $\delta>0$. Then, there exist a $v \in \N$ and $g_1,\ldots,g_v \in \mathcal G_m$ such that \linebreak $\sup_{g\in \mathcal G_m} \min_{j=1,\ldots,v}d_2(g,g_j)<\delta.$ Therefore, for fixed $\delta>0$ and appropriately chosen $g_1,\ldots, g_v$, we can approximate the error term by
 \begin{eqnarray*}
 		\lefteqn{\sup_{g\in \mathcal G_m} \left\|\sqrt n \int_{-\pi}^{\pi}g(\omega)R_n^{(2)}(\omega)d\omega\right\|}\\
 		&\leq&\sup_{g\in \mathcal G_m} \max_{j=1,\ldots,v \atop d_2(g,g_j)<\delta} \left\|\sqrt n \int_{-\pi}^{\pi}(g(\omega)-g_j(\omega))R_n^{(2)}(\omega)d\omega\right\|+\max_{j=1,\ldots,v} \left\|\sqrt n \int_{-\pi}^{\pi}g_j(\omega)R_n^{(2)}(\omega)d\omega\right\|\\
 		&\leq & \sqrt{2\pi} \delta \left( n \int_{-\pi}^{\pi} \left\|R_n^{(2)}(\omega)\right\|^2 d\omega\right)^{1/2} +\max_{j=1,\ldots,v} \left\|\sqrt n \int_{-\pi}^{\pi}g_j(\omega)R_n^{(2)}(\omega)d\omega\right\|.
 	\end{eqnarray*}
 	Since $\delta$ can be chosen arbitrary small, it is sufficient to prove
 \begin{eqnarray}
 	n \int_{-\pi}^\pi \|R_n^{(2)}(\omega)\|^2d\omega  &=&O_{\P}(1), \quad \quad\text{ and } \label{ex1}\\
  \left\|\sqrt n \int_{-\pi}^{\pi}g(\omega)R_n^{(2)}(\omega)d\omega\right\|&=&o_\P(1) \quad\quad\quad \text{for }  g\in \mathcal G_m. \label{exx1}
 	\end{eqnarray}
 	On the one hand, we have
 	\begin{eqnarray*}
 		\lefteqn{\E\left[ n \int_{-\pi}^\pi \|R_n^{(2)}(\omega)\|^2d\omega\right]}\\
 		&=& \sum_{S,T=1}^{r} \int_{-\pi}^{\pi}\frac 1 {4\pi^2n}\sum_{p_1,p_2=0}^{\infty}\sum_{t_1,t_2=0}^\infty \sum_{k_1,k_2=1}^n \sum_{\ell_1=n+1-t_1}^{n}\sum_{\ell_2=n+1-t_2}^n \E\left[\left(\Phi_{p_1}N_{k_1}^{(\Delta)}N_{\ell_1}^{(\Delta)\top} \Phi_{t_1}^\top \right)[S,T]\right. \\
 		&&\left.\qquad \qquad\times\left( \Phi_{p_2} N_{k_2}^{(\Delta)}N_{\ell_2}^{(\Delta)\top}\Phi_{t_2}^\top \right)[S,T]\right]e^{i\omega(k_1+p_1-\ell_1-t_1-k_2-p_2+\ell_2+t_2)}d\omega  \\
 		&=& \sum_{S,T=1}^{r} \int_{-\pi}^{\pi}\frac 1 {4\pi^2n}\sum_{p_1,p_2=0}^{\infty}\sum_{t_1,t_2=0}^\infty \sum_{k=1}^n \sum_{\ell=n+1-\min\{t_1,t_2\}}^{n}\E\left[\left( \Phi_{p_1}N_{k}^{(\Delta)}N_{\ell}^{(\Delta)\top}\Phi_{t_1}^\top\right) [S,T]\right. \\
 		&&\left. \qquad \qquad  \times\left(\Phi_{p_2} N_{k}^{(\Delta)}N_{\ell}^{(\Delta)\top}\Phi_{t_2}^\top \right)[S,T]\right]e^{i\omega(p_1-t_1-p_2+t_2)}d\omega  \\
 		&&+  \sum_{S,T=1}^{r} \int_{-\pi}^{\pi}\frac 1 {4\pi^2n}\sum_{p_1,p_2=0}^{\infty}\sum_{t_1,t_2=0}^\infty \sum_{k=\max\{1,n+1-t_1\}}^n \sum_{\substack{\ell=\max\{1,n+1-t_2\},\\\ell\neq k}}^n \left(\Phi_{p_1}\E\left[N_{1}^{(\Delta)}N_{1}^{(\Delta)\top}\right]\Phi_{t_1}^\top\right) [S,T] \\
 		&& \qquad \qquad  \times\left(\Phi_{p_2} \E\left[N_{1}^{(\Delta)}N_{1}^{(\Delta)\top}\right]\Phi_{t_2}^\top\right) [S,T]e^{i\omega(p_1-t_1-p_2+t_2)}d\omega  \\
 		&&+  \sum_{S,T=1}^{r} \int_{-\pi}^{\pi}\frac 1 {4\pi^2n}\sum_{p_1,p_2=0}^{\infty}\sum_{t_1,t_2=0}^\infty  \sum_{k=\max\{1,n+1-t_2\}}^n \sum_{\substack{\ell=\max\{1,n+1-t_1\},\\\ell\neq k}}^n \E\left[\left(\Phi_{p_1}N_{k}^{(\Delta)}N_{\ell}^{(\Delta)\top}\Phi_{t_1}^\top\right) [S,T]\right. \\
 		&&\left. \qquad \qquad \times\left(\Phi_{p_2} N_{\ell}^{(\Delta)}N_{k}^{(\Delta)\top}\Phi_{t_2}^\top \right)[S,T]\right]e^{i\omega(2k-2\ell+p_1-t_1-p_2+t_2)}d\omega  \\
 		&\leq&\frac{r^2}{2\pi n} \left(\sum_{p=0}^\infty \|\Phi_{p}\| \right)^2 \sum_{t_1,t_2=0}^\infty\|\Phi_{t_1}\| \|\Phi_{t_2}\|  \left(\sum_{\substack{\ell=n+1-\min\{t_1,t_2\}}}^{n} \sum_{k=1}^n +2 \sum_{\substack{k=\max\{1,n+1-t_1\}\\\ell=\max\{1,n+1-t_2\}\\\ell\neq k}}^n\right) \E\left[\|N_1^{(\Delta)}\|^2\right]^2.
 	\end{eqnarray*}
 But this term is uniformly bounded due to \eqref{PhiBound} which results in \eqref{ex1}.

 	On the other hand, we have
 \begin{eqnarray}\nonumber\lefteqn{\left\|\sqrt n \int_{-\pi}^{\pi}g(\omega)R_n^{(2)}(\omega)d\omega\right\|} \\
 	\nonumber&\leq& \left\| \frac 1 {2\pi \sqrt n}\int_{-\pi}^\pi g(\omega)\sum_{t=0}^{n} \sum_{k=1}^{n-t} \sum_{\ell=1+n-t}^n \Phi(e^{-i\omega})N_k^{(\Delta)}N_\ell^{(\Delta)\top}\Phi_t^\top e^{-i\omega(k-\ell-t)}d\omega\right\| \\
 	\nonumber&&+\left\| \frac 1 {2\pi \sqrt n}\int_{-\pi}^\pi g(\omega) \sum_{t=0}^{n} \sum_{k=n+1-t}^{n} \sum_{\ell=1+n-t}^n \Phi(e^{-i\omega})N_k^{(\Delta)}N_\ell^{(\Delta)\top}\Phi_t^\top e^{-i\omega(k-\ell-t)}d\omega\right\|\\
 	&&+\left\| \frac 1 {2\pi \sqrt n}\int_{-\pi}^\pi g(\omega) \sum_{t=n+1}^{\infty} \sum_{k=1}^n \sum_{\ell=1+n-t}^n \Phi(e^{-i\omega})N_k^{(\Delta)}N_\ell^{(\Delta)\top}\Phi_t^\top e^{-i\omega(k-\ell-t)}d\omega\right\|\nonumber\\
 	&=:& R_{n,1}^{(2)}+R_{n,2}^{(2)}+R_{n,3}^{(2)}.\label{Gl123}
 	\end{eqnarray}
 	We investigate the terms in \eqref{Gl123} separately.
 	For the first one, the independency of the sequence $N^{(\Delta)}$, $k\not=l$ and the Cauchy-Schwarz inequality  yield
 	\begin{eqnarray}
 	\nonumber\lefteqn{\E\left[\left(R_{n,1}^{(2)}\right)^2\right]}\\
 	\nonumber&=&\frac{1}{4\pi^2 n}\sum_{k=1}^{n-1}\sum_{\ell=1+k}^n\E\left[\left\|\int_{-\pi}^\pi \sum_{t=1+n-\ell}^{n-k}g(\omega)\Phi(e^{-i\omega})N_k^{(\Delta)}N_\ell^{(\Delta)\top}\Phi_t^{\top}e^{-i\omega(k-\ell-t)}d\omega\right\|^2\right]\\
 	\nonumber&\leq& \frac{\mathfrak C}{n}\sum_{k=1}^{n}\sum_{\ell=1+k}^n \left(\sum_{t=1+n-\ell}^{n-k}\|\Phi_{t}\|^2t^2\right)\left(\sum_{t=1+n-\ell}^{n-k}t^{-2}\left\| \int_{-\pi}^{\pi}g(\omega)\Phi(e^{-i\omega})e^{-i\omega(k-\ell-t)}d\omega\right\|^2\right)\\
\nonumber&\leq& \frac{\mathfrak C}{n}\sum_{k=1}^{n} \left(\sum_{t=1}^{n-k}\sum_{\ell=1+n-t}^n\|\Phi_{t}\|^2t^2\right)\left(\sum_{t=1}^{\infty}t^{-2}\left\| \int_{-\pi}^{\pi}g(\omega)\Phi(e^{-i\omega})e^{-i\omega(k-t-n)}d\omega\right\|^2\right)\\
\nonumber&\leq & \frac{\mathfrak C}{n}\left(\sum_{t=0}^\infty \|\Phi_t\|^2 t^3 \right)\sum_{t=1}^{\infty}\frac 1 {t^2}\sum_{k=-\infty}^\infty\left\| \reallywidehat{\left(g(\cdot)\Phi(e^{-i\cdot})\right)}_k\right\|^2.
 	\end{eqnarray}
Finally, due to \eqref{PhiBound} and \eqref{FK_absch} we receive
$$\E\left[\left(R_{n,1}^{(2)}\right)^2\right]\leq \frac{\mathfrak C}{n}\overset{n \to \infty}{\longrightarrow}0.$$
  	Next, we investigate the second term in \eqref{Gl123}. By the independence of the sequence $N^{(\Delta)}$ similar calculations as above
    give
 	\begin{eqnarray}
 	\nonumber\lefteqn{\E\left[\left(R_{n,2}^{(2)}\right)^2\right]}\\
 	\nonumber&\leq &\frac{1}{4\pi^2 n}\E\left[\left\|\sum_{k=1}^{n}\sum_{\ell=1}^n\sum_{t=1+n-\min\{k,\ell\}}^{n}\int_{-\pi}^\pi g(\omega)\Phi(e^{-i\omega})N_k^{(\Delta)}N_\ell^{(\Delta)\top}\Phi_t^{\top}e^{-i\omega(k-\ell-t)}d\omega\right\|^2\right]\\
 	\nonumber&\leq& \frac{1}{4\pi^2 n}\sum_{k=1}^{n}\sum_{\ell=1}^n\E\left[\left\|\sum_{t=1+n-\min\{k,\ell\}}^{n}\int_{-\pi}^\pi g(\omega)\Phi(e^{-i\omega})N_k^{(\Delta)}N_\ell^{(\Delta)\top}\Phi_t^{\top}e^{-i\omega(k-\ell-t)}d\omega\right\|^2\right]\\
 	\nonumber&&\quad+\frac{1}{4\pi^2 n}\left\|\sum_{k=1}^{n}\sum_{t=1+n-k}^{n}\int_{-\pi}^\pi g(\omega)\Phi(e^{-i\omega})\Sigma_N^{(\Delta)}\Phi_t^{\top}e^{i\omega t}d\omega\right\|^2\\
 	\nonumber&&\quad+\sum_{S,T=1}^r\frac{1}{4\pi^2 n}\sum_{k=1}^n\sum_{\substack{\ell=1\\\ell\neq k}}^n\sum_{\substack{\substack{t_1,t_2=1+n -\min\{k,\ell\}}}}^n\E\left[\int_{-\pi}^\pi \left[g(\omega)\Phi(e^{-i\omega})N_k^{(\Delta)}N_\ell^{(\Delta)\top}\Phi_{t_1}^{\top}\right][S,T]e^{-i\omega(k-\ell-t_1)}d\omega\right.\\
 	\nonumber&&\qquad \qquad \qquad\qquad\qquad \qquad  \left.\times\int_{-\pi}^\pi \left[g(\omega)\Phi(e^{i\omega})N_\ell^{(\Delta)}N_k^{(\Delta)\top}\Phi_{t_2}^{\top}\right][S,T]e^{-i\omega(k-\ell+t_2)}d\omega\right].
 \end{eqnarray}
 Furthermore, Cauchy-Schwarz inequality gives the upper bound
 \begin{eqnarray}
 	\nonumber&\leq& \frac{\mathfrak C}{ n}\sum_{k=1}^{n}\sum_{\ell=1}^n\left(\sum_{\substack{t=1+n-\min\{k,\ell\}}}^{n}\frac 1 {t^2}\left\|\int_{-\pi}^\pi g(\omega)\Phi(e^{-i\omega})e^{-i\omega(k-\ell-t)}d\omega\right\|^2\right) \left(\sum_{\substack{t=1+n-\min\{k,\ell\}}}^n\|\Phi_t\|^2 t^2\right)\\
 	\nonumber&&\quad+\frac{\mathfrak C}{n}\left(\sum_{t=1}^n t^2 \|\Phi_t\|^2\right)\left(\sum_{t=1}^{n}\left\|\int_{-\pi}^\pi g(\omega)\Phi(e^{-i\omega})e^{i\omega t}d\omega\right\|^2\right)
            \\
 	\nonumber&&\quad+\frac{\mathfrak C}{ n}\sum_{k=1}^n\sum_{\substack{\ell=1\\\ell\neq k}}^n\sum_{\substack{t_1,t_2=1+n -\min\{k,\ell\}}}^n\left\|\int_{-\pi}^\pi g(\omega)\Phi(e^{-i\omega})e^{-i\omega(k-\ell-t_1)} d\omega\right\| \|\Phi_{t_1}\|\\
 	\nonumber&&\qquad \qquad \qquad\qquad\quad \qquad \times \left\|\int_{-\pi}^\pi g(\omega)\Phi(e^{i\omega})e^{-i\omega(k-\ell+t_2)}d\omega\right\| \|\Phi_{t_2}\| \\
 		\nonumber&\leq&\frac{\mathfrak C}{ n}\sum_{\ell=1}^n\left(\sum_{t=1+n}^{n+\ell}\frac 1 {(t-\ell)^2}\left(\sum_{k=-\infty}^{\infty}\left\|\reallywidehat{\left(g(\cdot)\Phi(e^{-i\cdot})\right)}_{k}\right\|^2\right)\right) \left(\sum_{t=1+n-\ell}^n\|\Phi_t\|^2 t^2\right) \\
 	\nonumber&&\quad+\frac{\mathfrak C}{n}\left(\sum_{t=1}^n t^2 \|\Phi_t\|^2\right)\left(\sum_{t=1}^{n}\left\|\reallywidehat{g(\cdot)\Phi(e^{-i\cdot})}_{t}\right\|^2\right)\\
 	\nonumber&&\quad+\frac{\mathfrak C}{ n}\max_{t\in \Z}\left\|\reallywidehat{\left(g(\cdot)\Phi(e^{-i\cdot})\right)}_{t}\right\|\max_{t\in \Z}\left\|\reallywidehat{\left(g(\cdot)\Phi(e^{i\cdot})\right)}_{t}\right\|\sum_{t_1,t_2=1}^nt_1 \|\Phi_{t_1}\|t_2 \|\Phi_{t_2}\|.
 	\end{eqnarray}
 A conclusion of \eqref{PhiBound} and \eqref{FK_absch} is then $
\E\left[\left(R_{n,2}^{(2)}\right)^2\right]\leq \mathfrak C/n\overset{n\to \infty}{\longrightarrow}0.$

 	The convergence of $R_{n,3}^{(2)}$ can be proven similarly.

(b) \, The proof goes analogue to (a)  by replacing $\sup_{g\in \mathcal G_m }$ by $\sup_{g\in \mathcal G_m \atop
\|g\|_{\Phi,s}\leq 1}$. Therefore, we have to take \Cref{corollary:aux} into account implying that
there exists a constant $\mathfrak C>0$ such that $\sup_{g\in \mathcal G_m \atop
\|g\|_{\Phi,s}\leq 1}\|g\|_2\leq \mathfrak C$.
 \end{proof}

 \noindent \textit{Proof of \Cref{Satz::intP::Y2}}~\\
 We use the decomposition $\tr(E_{n}(g))=\tr(E_{n,N^{(\Delta)}}(g^\Phi))+E_{n,R}(g)$.
 A conclusion of \Cref{ApproxPerWN} (a) is that $\|E_{n,R}\|_{\mathcal{G}_m}\overset{\P}\longrightarrow0$ as $n\to\infty$
 and hence, it has no influence on the asymptotic behavior of $\tr(E_n(g))$.

 It remains to investigate $\tr(E_{n,N^{(\Delta)}}(g^\Phi))$. Therefore,
 define $\mathcal G_r:=\{g^\Phi: g \in \mathcal G_m\}\subseteq \mathcal H_r.$ Due to the continuous mapping theorem, it
 is sufficient to show that $(E_{n,N^{(\Delta)}}(g^\Phi))_{g^\Phi\in \mathcal{G}_r}$ converges weakly in $(\mathcal C(\mathcal G_r,\C^{r\times r}),d_{\mathcal{G}_r})$.
 Indeed, Assumption B for $\mathcal G_m$ implies that the set $\mathcal{G}_r$ satisfies the analogue conditions in Assumption N (see  \Cref{Remark 3.1}),
 and as well $\sup_{g^{\Phi}\in\mathcal{G}_r}\|g^\Phi\|_{*,s}=\sup_{g\in\mathcal{G}_m}\|g\|_{\Phi,s}<\infty$ .
 Consequently, an application of \Cref{Theorem:5.2} yields
 $$(E_{n,N^{(\Delta)}}(g^\Phi))_{g^{\Phi}\in \mathcal G_r}\overset{\mathcal D}{\longrightarrow}
        (E(g^\Phi))_{g^\Phi\in\mathcal G_{r}} \quad\text{ in }(\mathcal C (\mathcal G_{r},\C^{r\times r}),d_{\mathcal G_{r}}),$$
 such that we are able to conclude the statement.
 \hfill$\Box$

 \subsection{Proof of \Cref{Satz::intP::Y}}

 \noindent \textit{Proof of \Cref{Satz::intP::Y}}~\\
 The proof goes as the proof of  \Cref{Satz::intP::Y2} using \Cref{Satz::intP::N} instead of \Cref{Theorem:5.2} and \Cref{ApproxPerWN} (b) instead of \Cref{ApproxPerWN}(a).
 \hfill$\Box$

\subsection{Proof of \Cref{Theorem:main3}} \label{Section:6.3}

\noindent\textit{Proof of \Cref{Theorem:main3}}~\\
Define the Hilbert space $\mathcal G_r:=\{g^{\Phi}: g\in \mathcal H_m^s\}\subseteq \mathcal H_r$ with scalar product $$\langle g^\Phi,f^\Phi\rangle_{\mathcal G_r} =\sum_{h=-\infty}^{\infty}(1+|h|)^{2s}\tr\left(\widehat {g_h^\Phi} (\widehat {f_h^\Phi})^\top\right)$$ for $g^\Phi,f^\Phi \in \mathcal G_r$.
We use as well the representation $\tr\left(E_{n}(g)\right)=\tr(E_{n,N^{(\Delta)}}(g^\Phi))+E_{n,R}(g).$
Since Assumption (B1) is satisfied, we have by \Cref{Satz::intP::N} as $n\to \infty$,
 $$(\tr(E_{n,N^{(\Delta)}}(g^\Phi)))_{g^\Phi\in \mathcal G_r}\overset{\mathcal D}{\longrightarrow}
        (\tr(E(g^\Phi)))_{g^\Phi\in \mathcal G_{r}} \quad \text{ in } ({{\mathcal G_r^*}^\prime},d_{{\mathcal G^{\ast\prime}_r}}^s).$$
        If we show that $\sup_{\substack{g\in \mathcal H_{m}^s} \atop \|g\|_{\Phi,s}\leq 1}\left\|E_{n,R}(g)\right\|\overset{\P}{\longrightarrow}0$
        the same arguments as in the proof of \Cref{Satz::intP::Y2} finish the proof.
Indeed, as in the proof of \Cref{ApproxPerWN} we will show that
 	\beao
 	\sup_{\substack{g\in \mathcal H_{m}^s} \atop \|g\|_{\Phi,s}\leq 1}\left\| \sqrt n \int_{-\pi}^{\pi}g(\omega)R_n^{(i)}(\omega)d\omega\right\| \overset{\P}{\longrightarrow}0, \quad i=1,\ldots,8,
 	\eeao
 and therefore, it is sufficient to investigate the terms  $i=1,2$.

The term \underline{$i=1$} can be handled as in \Cref{ApproxPerWN} because $\sup_{g\in \mathcal H_m^s  \atop \|g\|_{\Phi,s}\leq 1}\|g\|_2<\infty$ due to \Cref{corollary:aux}.
 	Next, we investigate \underline{$i=2$}:
The Cauchy-Schwarz inequality yields
 	\begin{eqnarray*}
 		\lefteqn{ 	\sup_{\substack{g\in \mathcal H_{m}^s} \atop \|g\|_{\Phi,s}\leq 1}\left\|\sqrt n \int_{-\pi}^{\pi}g(\omega)R_n^{(2)}(\omega)d\omega\right\|}\\
 		&=&	\sup_{\substack{g\in \mathcal H_{m}^s} \atop \|g\|_{\Phi,s}\leq 1} \left\|\frac 1 {\sqrt n}\int_{-\pi}^{\pi} \frac 1 {2\pi} \sum_{p=0}^{\infty}\sum_{t=0}^\infty \sum_{k=1}^{n}\sum_{\ell =n+1-t}^{n}\sum_{u=-\infty}^{\infty} \widehat g_u\Phi_p N_k^{(\Delta)}N_\ell^{(\Delta)\top}\Phi_t^\top  e^{-i\omega(k+p-\ell-t-u)}d\omega\right\| \\
&\leq&	 	\sup_{\substack{g\in \mathcal H_{m}^s} \atop \|g\|_{\Phi,s}\leq 1}\frac 1 { \sqrt n} \sum_{t=0}^\infty \left\|\Phi_t\right\|\sum_{p=0}^{\infty}\|\Phi_p\| \sum_{\ell=n+1-t}^n\left\|N_\ell^{(\Delta)}\right\| \sum_{u=1+p-\ell-t}^{n+p-\ell-t} \left\| \widehat g_{u}\right\|  \left\| N_{\ell+u+t-p}^{(\Delta)}\right\|  \\
	&\leq&	 	\sup_{\substack{g\in \mathcal H_{m}^s} \atop \|g\|_{\Phi,s}\leq 1}\frac 1 { \sqrt n} \sum_{t=0}^\infty \left\|\Phi_t\right\|\sum_{p=0}^{\infty}\|\Phi_p\| \sum_{\ell=n+1-t}^n\left\|N_\ell^{(\Delta)}\right\|\left( \sum_{u=1+p-\ell-t}^{n+p-\ell-t} (1+|u|)^{2s} \left\| \widehat g_{u}\right\|^2\right)^{1/2} \\&&\qquad \qquad \qquad\qquad\qquad \qquad\qquad \qquad\qquad \times \left( \sum_{u=1-\ell-t}^{n-\ell-t}(1+|u|)^{-2s}\left\| N_{\ell+u+t-p}^{(\Delta)}\right\|^2\right)^{1/2} .
 	\end{eqnarray*}
By an application of \Cref{lemma:aux1}(a) and $\|g\|_{\Phi,s}\leq 1$, we obtain $	\sup_{\substack{g\in \mathcal H_{m}^s} \atop \|g\|_{\Phi,s}\leq 1} \sum_{u=-\infty}^\infty (1+|u|)^{2s}\|\widehat g_{u}\|^2 \leq \mathfrak{C}$ and hence, the upper bound
 \begin{eqnarray}
\nonumber 	\lefteqn{ \|\sqrt n \int_{-\pi}^{\pi}g(\omega) R_n^{(2)}(\omega)d\omega \|^{\Phi,s}_{\mathcal H_m^{s\prime}}}\\
 	&\leq&\frac{\mathfrak C} { \sqrt n} \sum_{t=0}^\infty \left\|\Phi_t\right\|\sum_{p=0}^{\infty}\|\Phi_p\| \sum_{\ell=n+1-t}^n\left\|N_\ell^{(\Delta)}\right\| \max_{j=1,\ldots,n}\left\| N_{j}^{(\Delta)}\right\|\left( \sum_{u=-\infty}^{\infty}(1+|u|)^{-2s}\right)^{1/2}\label{upperR2}
 	\end{eqnarray}
 	follows. Note that $n^{-1/2}\max_{j=1,\ldots,n}\|N_j^{(\Delta)}\|\overset{\P}{\rightarrow}0$ as $n\to\infty$, since for any $\varepsilon>0$:	\begin{align*}
 \P\left(\frac 1 {\sqrt n} \max_{j=1,\ldots,n}\|N_j^{(\Delta)}\|>\varepsilon\right)
 &\leq 1-\left(1-\frac {\E[\|N_1^{(\Delta)}\|^4]}{\varepsilon^4 n^2}\right)^n \overset{ n\to \infty}{\longrightarrow} 0.
 \end{align*}
 Then,   $\sum_{u=-\infty}^{\infty} (1+|u|)^{-2s}<\infty $ for $s>1/2$, $ \sum_{t=0}^\infty t\|\Phi_t\|<\infty$, $\frac 1 t \sum_{\ell=n+1-t}^{n}\|N_\ell^{(\Delta)}\|=O_\P(1)$ and \eqref{upperR2} yield
  \begin{eqnarray*}
  	\left\|\sqrt n \int_{-\pi}^{\pi}g(\omega) R_n^{(2)}(\omega)d\omega \right\|_{\mathcal H_m^{s\prime}}^{\Phi,s}\overset{\P}{\longrightarrow}0.
  	\end{eqnarray*}
  The remaining terms $i=3,\ldots,8$ can be handled similarly.
\hfill$\Box$

\begin{appendix}

\section{Appendix}

\subsection{Proof of \Cref{lemma:aux1}}

\noindent\textit{Proof of \Cref{lemma:aux1}.}
\begin{itemize}
	\item[(a)]
Let $g\in  \mathcal H_m$. 
Note that the Fourier coefficients of $g^\Phi$ satisfy
\begin{eqnarray}
	\widehat {g^\Phi_h}&=&  \sum_{\substack{j_1,j_2=0}}^\infty \Phi_{j_1}^\top \widehat g_{h-j_1+j_2}\Phi_{j_2}= \sum_{\substack{j_1,j_2=0}}^\infty e^{A^\top \Delta j_1}C^\top \widehat g_{h-j_1+j_2}Ce^{A\Delta j_2}, \quad h\in\Z. \label{FC_rep}
\end{eqnarray}Therefore, we have the representation
\begin{eqnarray*}
	C^\top\widehat g_h C &=&\left[ \sum_{j_1,j_2=0}^\infty-\sum_{j_1=1}^\infty\sum_{j_2=0}^\infty-\sum_{j_1=0}^\infty\sum_{j_2=1}^\infty +\sum_{j_1,j_2=1}^\infty\right]e^{A^\top \Delta j_1} C^\top \widehat g_{h-j_1+j_2}Ce^{A\Delta j_2}   \\
	&=&\widehat{g^\Phi_h}-e^{A^\top \Delta}\widehat{g^\Phi_{h-1}}-\widehat{g^\Phi_{h+1}}e^{A\Delta}+e^{A^\top \Delta}\widehat{g^\Phi_{h}}e^{A\Delta}.
	\end{eqnarray*}
Thus,
\beao
            \|C^\top\widehat g_h C\|\leq \mathfrak C\left[\|\widehat{ g^\Phi_h}\|+\|\widehat{ g^\Phi_{h-1}}\|+\|\widehat{g^\Phi_{h+1}}\|\right].
        \eeao
Assumption A says that  $CC^\top=Id_m$, hence $\|C^\top \widehat g_h C\|=\|\widehat g_h\|$  and therefore, the assertion follows.
\item[(b)] Due to \eqref{FC_rep} it is sufficient to show that for some $\nu>0$ and any $h>0$ $$\sum_{j_1,j_2=0}^\infty \|\Phi_{j_1}\| \|\Phi_{j_2}\| e^{-\lambda|h-j_1+j_2|}\leq \mathfrak C e^{-\nu h}.$$
Let $h>0$. Since $\|\Phi_j\|\leq e^{-\mu j}$ for $j\in \N_0$ and some $\mu>0$, we obtain
\begin{eqnarray*}
	\lefteqn{\sum_{j_1,j_2=0}^\infty \|\Phi_{j_1}\| \|\Phi_{j_2}\| e^{-\lambda|h-j_1+j_2|}}\\
	&=&\sum_{j_2=0}^\infty \sum_{j_1=0}^{h+j_2}\|\Phi_{j_1}\| \|\Phi_{j_2} \| e^{-\lambda(h-j_1+j_2)}+\sum_{j_2=0}^\infty \sum_{j_1=h+j_2+1}^{\infty}\|\Phi_{j_1}\| \|\Phi_{j_2} \| e^{\lambda(h-j_1+j_2)} \\
	&\leq&\mathfrak C e^{-\lambda h}\sum_{j_2=0}^\infty e^{-(\mu+\lambda)j_2} \left(\frac{1-e^{(-\mu+\lambda)(h+j_2+1)}}{1-e^{-\mu+\lambda}}\right) +e^{\lambda h}\sum_{j_2=0}^\infty e^{(-\mu+\lambda)j_2}\left(\frac{e^{-(\mu+\lambda)(h+j_2+1)}}{1-e^{-\mu-\lambda}}\right) \\
	&\leq& \mathfrak C \left(e^{-\lambda h}+e^{-\mu h}\right).
	\end{eqnarray*}
	Setting $\nu=\min\{\lambda, \mu\}$ yields the assertion.
\end{itemize}
\hfill$\Box$

\subsection{Proof of \Cref{Satz::intP::Y1}}
 \noindent\textit{Proof of \Cref{Satz::intP::Y1}}
 \begin{itemize}
 	\item[(a)] Define
     $$\mathfrak g_j(\omega):=  g^\Phi_j(\omega)+g^\Phi_j(-\omega)^\top=\Phi(e^{i\omega})^\top (g_j(\omega)+g_j(-\omega)^\top) \Phi(e^{-i\omega}), \quad \omega\in[-\pi,\pi],$$ for $j=1,2$ with Fourier coefficients  $\widehat{ (\mathfrak g_j)}_{h}=\widehat{ ( g_j^\Phi)}_{h}+\widehat{ ( g_j^\Phi)}_{-h}^\top$ for $h\in\Z$.
 	First of all,
 	\begin{eqnarray*}
 		\lefteqn{\sum_{h=-\infty}^\infty \tr\left(\Sigma_N^{(\Delta)}\widehat{ (\mathfrak g_1)}_h^H\Sigma_N^{(\Delta)}\widehat{ (\mathfrak g_2)}_h\right)}\\
 		&=&\frac 1 {2\pi}\sum_{h=-\infty}^{\infty} \sum_{\ell=-\infty}^{\infty} \tr\left(\Sigma_N^{(\Delta)}\widehat{ (\mathfrak g_1)}_\ell^H\Sigma_N^{(\Delta)}\widehat{ (\mathfrak g_2)}_h\right)\int_{-\pi}^\pi e^{i(\ell-h)\omega}d\omega\\ 
 		&=&\frac{1}{2\pi}\cdot2\pi\cdot 2\pi\int_{-\pi}^\pi \tr\left(\frac {\Sigma_N^{(\Delta)}} {2\pi}\Phi(e^{i\omega})^\top (g_1(\omega)+g_1(-\omega)^\top)^H \Phi(e^{-i\omega}) \right.\\
 		&&\left.\quad\quad\quad\quad\quad\quad\quad\quad \times \frac{\Sigma_N^{(\Delta)}} {2\pi}\Phi(e^{i\omega})^\top (g_2(\omega)+g_2(-\omega)^\top) \Phi(e^{-i\omega})\right)d\omega\\
 		&=&2\pi\int_{-\pi}^\pi \tr\left(\f(\omega) (g_1(\omega)+g_1(-\omega)^\top)^H \f(\omega) (g_2(\omega)+g_2(-\omega)^\top)\right) d\omega. 
 	\end{eqnarray*}
 	Then, since $(W_h)_{h\in \N}$ is an  i.i.d. centered sequence and the sum is well-defined, we obtain
 	\begin{eqnarray*}
 		\lefteqn{\Cov\left(\tr\left(\sum_{h=1}^\infty W_h \widehat{ (\mathfrak g_1)}_{h} \right) ,\tr\left(\sum_{h=1}^\infty W_h \widehat{ (\mathfrak g_2)}_{h}\right) \right) }\\
 		&=&\sum_{h=1}^\infty \tr\left(\Sigma_N^{(\Delta)}\widehat{ (\mathfrak g_1)}_h^H\Sigma_N^{(\Delta)}\widehat{ (\mathfrak g_2)}_h\right)\\
 		&=&\frac{1}{2}\sum_{h=-\infty}^\infty \tr\left(\Sigma_N^{(\Delta)}\widehat{ (\mathfrak g_1)}_h^H\Sigma_N^{(\Delta)}\widehat{ (\mathfrak g_2)}_h\right)-2\vecc\left( \widehat{\left(g_1^\Phi\right)}_0^\top\right)^\top \left(\Sigma_N^{(\Delta)}\otimes \Sigma_N^{(\Delta)}\right)\vecc\left( \widehat{\left(g_2^\Phi\right)}_0^H\right).
 	\end{eqnarray*}
 	Finally, since $W_0'$ is independent from $(W_h)_{h\in \N}$ we receive
 	\begin{eqnarray*}
 		\lefteqn{\Cov(\tr(E(g_1)),\tr(E(g_2)))}	\\
 		&=&\frac{1}{2}2\pi\int_{-\pi}^\pi \tr\left(\f(\omega) (g_1(\omega)+g_1(-\omega)^\top)^H \f(\omega) (g_2(\omega)+g_2(-\omega)^\top)\right) d\omega \\
 		&&-2\vecc\left( \widehat{\left(g_1^\Phi\right)}_0^\top\right)^\top \left(\Sigma_N^{(\Delta)}\otimes \Sigma_N^{(\Delta)}\right)\vecc\left( \widehat{\left(g_2^\Phi\right)}_0^H\right)\\
 		&&+\vecc\left( \widehat{\left(g_1^\Phi\right)}_0^\top\right)^\top \left(\E[N^{(\Delta)}_1N^{(\Delta)\top}_1 \otimes N^{(\Delta)}_1 N^{(\Delta)\top}_1]-\Sigma_N^{(\Delta)}\otimes \Sigma_N^{(\Delta)}\right)\vecc\left( \widehat{\left(g_2^\Phi\right)}_0^H\right)\\
 		&=&\pi\int_{-\pi}^\pi \tr\left(\f(\omega) (g_1(\omega)+g_1(-\omega)^\top)^H \f(\omega) (g_2(\omega)+g_2(-\omega)^\top)\right) d\omega \\
 		&&+\vecc\left( \widehat{\left(g_1^\Phi\right)}_0\right)^\top \left(\E[N^{(\Delta)}_1N^{(\Delta)\top}_1 \otimes N^{(\Delta)}_1 N^{(\Delta)\top}_1]-3\Sigma_N^{(\Delta)}\otimes \Sigma_N^{(\Delta)}\right)\vecc\left( \widehat{\left(g_2^\Phi\right)}_0^H\right).
 	\end{eqnarray*}
\item[(b)] Similar calculations as in (a) yield that the covariance functions coincide.
 	\item[(c)] Follows directly from (a) since for a Gaussian random vector $N^{(\Delta)}_1$ we have
 	$\E[N^{(\Delta)}_1N^{(\Delta)\top}_1 \otimes N^{(\Delta)}_1 N^{(\Delta)\top}_1]=3\Sigma_N^{(\Delta)}\otimes \Sigma_N^{(\Delta)}$.
 	\hfill$\Box$
 \end{itemize}

\subsection{Proof of \Cref{aux2}}

We start with an auxiliary result.

 \begin{lemma} \label{aux3}
 Let Assumption A and  $\sup_{g\in \mathcal G_{r}}\|g\|_{2}<\infty$ hold.
 \begin{itemize}
    \item[(a)] $
    \sup_{j\in \Z}n \E\left[\left\|\overline \Gamma_{n,N^{(\Delta)}}(j)-\E\left[\overline \Gamma_{n,N^{(\Delta)}}(j)\right]\right\|^2\right]\leq \mathfrak C.$
    \item[(b)] Define for any $g\in \mathcal G_r$, $M\in \N$,
    $$\widetilde E_{n,N^{(\Delta)}}^{(M)}(g)=\sqrt n \sum_{h=-M}^{M}\widehat g_h\left(\overline \Gamma_{n,N^{(\Delta)}}(h)-\E[\overline\Gamma_{n,N^{(\Delta)}}(h)]\right).$$
    Then, there exists a constant $K>0$ such that for any $M\in \N$: $$\E\left(\sup_{g\in \mathcal G_r}\|\widetilde E_{n,N^{(\Delta)}}^{(M)}(g)\|^2\right)<KM.$$
 \end{itemize}
 \end{lemma}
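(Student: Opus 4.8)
The plan is to prove (a) by a direct second-moment estimate exploiting the i.i.d.\ structure of $(N_k^{(\Delta)})_{k\in\Z}$, and then to deduce (b) from (a) via a Cauchy--Schwarz argument together with Parseval's identity. Throughout I would write $Z_k:=N_k^{(\Delta)}$ and use that Assumption $A$(a) forces $\E\|Z_1\|^4<\infty$, since $Z_1=\int_0^\Delta e^{A(\Delta-u)}B\,dL_u$ is a stochastic integral of a bounded deterministic integrand against a L\'evy process with finite fourth moment (this moment bound is already invoked in the proof of \Cref{ApproxPerWN}).

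For (a), fix $j\geq 0$, set $W_k:=Z_{k+j}Z_k^\top$, so that $\overline\Gamma_{n,N}(j)-\E[\overline\Gamma_{n,N}(j)]=\tfrac1n\sum_{k=1}^{n-j}(W_k-\E W_k)$, and expand the squared Frobenius norm entrywise:
\[
\E\Big\|\sum_{k=1}^{n-j}(W_k-\E W_k)\Big\|^2=\sum_{S,T=1}^{N}\sum_{k,\ell=1}^{n-j}\Cov\big(W_k[S,T],W_\ell[S,T]\big).
\]
The key observation is that, by independence and mean-zero of the $Z_k$, the $(S,T)$-summand vanishes unless $\{k,k+j\}\cap\{\ell,\ell+j\}\neq\emptyset$, i.e.\ unless $\ell\in\{k-j,k,k+j\}$; hence only $O(n-j)$ index pairs contribute, and each such covariance is bounded by $\E\|Z_1\|^4$ by Cauchy--Schwarz. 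This gives $\E\|\sum_k(W_k-\E W_k)\|^2\leq\mathfrak C\,(n-j)$ with $\mathfrak C$ independent of $j$, hence $n\,\E\|\overline\Gamma_{n,N}(j)-\E[\overline\Gamma_{n,N}(j)]\|^2=\tfrac1n\E\|\sum_k(W_k-\E W_k)\|^2\leq\mathfrak C$. The case $j<0$ reduces to $j>0$ by transposition, and $j=0$ is the standard i.i.d.\ bound $\E\|Z_1Z_1^\top-\Sigma_N^{(\Delta)}\|^2\leq\mathfrak C$. This is essentially the computation already carried out in the proof of \Cref{fidiCon}; the only additional point is uniformity in $j$, which is automatic since the bound has the form $\mathfrak C\,(n-|j|)/n$.

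For (b), I would use submultiplicativity of the Frobenius norm and Cauchy--Schwarz on the finite sum over $h$ to obtain
\[
\|\widetilde E_{n,N}^{(M)}(g)\|^2\leq n\Big(\sum_{h=-M}^{M}\|\widehat g_h\|^2\Big)\Big(\sum_{h=-M}^{M}\big\|\overline\Gamma_{n,N}(h)-\E[\overline\Gamma_{n,N}(h)]\big\|^2\Big),
\]
then bound $\sum_{h=-M}^{M}\|\widehat g_h\|^2\leq\|g\|_2^2\leq\sup_{g\in\mathcal G_N}\|g\|_2^2=:c<\infty$ by Parseval, which is uniform in $g$ and $M$. Taking the supremum over $g$, then expectations, and applying (a) to each of the $2M+1$ remaining terms yields $\E\big(\sup_{g\in\mathcal G_N}\|\widetilde E_{n,N}^{(M)}(g)\|^2\big)\leq c\,n\,(2M+1)\mathfrak C/n=c\,\mathfrak C\,(2M+1)\leq KM$ with $K:=3c\,\mathfrak C$ (using $M\geq1$). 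The only genuinely delicate step is the bookkeeping in (a): identifying precisely which index pairs $(k,\ell)$ produce a nonzero covariance and verifying that there are $O(n)$ of them with a constant not depending on $j$; everything else is routine.
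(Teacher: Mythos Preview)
Your proposal is correct and follows essentially the same route as the paper. For (a) the paper also expands $n\E\|\overline\Gamma_{n,N}(j)-\E[\overline\Gamma_{n,N}(j)]\|^2$ entrywise and uses the i.i.d.\ mean-zero structure of $(N_k^{(\Delta)})$; it is in fact slightly sharper, observing that for $j\neq 0$ only the diagonal $k=\ell$ survives (your cases $\ell=k\pm j$ actually give zero too), but your cruder count $\ell\in\{k-j,k,k+j\}$ already yields the required uniform bound $\mathfrak C(n-|j|)/n$. For (b) the paper proceeds exactly as you do: Cauchy--Schwarz on the sum over $h$, Parseval to bound $\sum_h\|\widehat g_h\|^2$ uniformly in $g$ and $M$, then apply (a) term by term to the remaining $2M+1$ summands.
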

\begin{proof} $\mbox{}$
\begin{itemize}
	\item[(a)] On the one hand, we have
\begin{eqnarray*}
     n\E\|\overline\Gamma_{n,N^{(\Delta)}}(0)-\E[\overline\Gamma_{n,N^{(\Delta)}}(0)]\|^2
&=&\sum_{S,T=1}^{r}\Var\left(\left(N_1^{(\Delta)}N_1^{(\Delta)\top}\right)[S,T]\right).
\end{eqnarray*}
Since $\E\left[\overline \Gamma_{n,N^{(\Delta)}}(j)\right]=0$ for $j \neq 0$, we obtain for $j>0$
\begin{eqnarray} \label{star}
    n\E[\|\overline\Gamma_{n,N^{(\Delta)}}(j)-\E[\overline\Gamma_{n,N^{(\Delta)}}(j)]\|^2]
&=&\frac{n-j}{n}\sum_{S,T=1}^{r}\E\left[\left(N_{1+j}^{(\Delta)}N_{1}^{(\Delta)\top}\right)[S,T]^2\right] \nonumber\\
&\leq &\sum_{S,T=1}^{r}\Var\left(\left(N_{1}^{(\Delta)}N_{2}^{(\Delta)\top}\right)[S,T]\right),
\end{eqnarray}
and with similar calculations we obtain the same bound for $j<0.$

\item[(b)] Due to Hölder inequality we receive
\beao
    \sup_{g\in \mathcal G_r}\|\widetilde E_{n,N^{(\Delta)}}^{(M)}(g)\|
        &\leq&\sqrt n \sup_{g\in \mathcal G_r}\sqrt{\sum_{h=-M}^{M}\|\widehat g_h\|^2}\sqrt{\sum_{h=-M}^{M}\|\overline \Gamma_{n,N^{(\Delta)}}(h)-\E[\overline\Gamma_{n,N^{(\Delta)}}(h)]\|^2}\\
        &\leq& K\sqrt{n {\sum_{h=-M}^{M}\|\overline \Gamma_{n,N^{(\Delta)}}(h)-\E[\overline\Gamma_{n,N^{(\Delta)}}(h)]\|^2}}.
\eeao
Thus, an application of \eqref{star} gives
\beao
     \E\left[\sup_{g\in \mathcal G_r}\|\widetilde E_{n,N^{(\Delta)}}^{(M)}(g)\|^2\right] 
        &\leq& 2K^2 Mn\max_{h=-M,\ldots,M}\E\|\overline \Gamma_{n,N^{(\Delta)}}(h)-\E[\overline\Gamma_{n,N^{(\Delta)}}(h)]\|^2<\infty,
\eeao
the statement.
\end{itemize}
\end{proof}

 \noindent\textit{Proof of \Cref{aux2}}~\\
  By \Cref{aux3} there exists a constant $K>0$ such that
 \beam \label{B5}
 \E\left[\sup_{g\in \mathcal G_r}\|\widetilde E_{n,N^{(\Delta)}}^{(M)}(g)\|^2\right]\leq KM \quad \forall\, M\in \N.
  \eeam
  Let $\varepsilon>0$. Then, due to \eqref{unibound} there exists an $M_0\in \N$ such that
 \beam \label{B4}
    \limsup_{n\to\infty}\P\left(\sup_{g\in \mathcal G_r}\sqrt{n}\left\| \sum_{M<|h|\leq n}\widehat g_h \overline \Gamma_{n,N^{(\Delta)}}(h)\right\|>\sqrt{\frac{2K}{\varepsilon}}\right)\leq \frac{\varepsilon}{2} \quad \forall\, M\geq M_0.
 \eeam
 Define $\delta:=\sqrt{8KM_0/\varepsilon}\geq \sqrt{ 8K/\varepsilon}$. Hence, Markov's inequality and \eqref{B5} result in
 \beao
    \lefteqn{\P\left(\sup_{g\in \mathcal G_r}\left\|E_{n,N^{(\Delta)}}(g)\right\|>\delta\right)}\\
    &&\leq\P\left(\sup_{g\in \mathcal G_r}\sqrt{n}\left\| \sum_{M_0<|h|\leq n}\widehat g_h \overline \Gamma_{n,N^{(\Delta)}}(h)\right\|>\delta/2\right)
    +\P\left(\sup_{g\in \mathcal G_r}\left\|\widetilde E_{n,N^{(\Delta)}}^{(M_0)}(g)\right\|>\delta/2\right)\\
    &&\leq \P\left(\sup_{g\in \mathcal G_r}\sqrt{n}\left\| \sum_{M_0<|h|\leq n}\widehat g_h \overline \Gamma_{n,N^{(\Delta)}}(h)\right\|>\delta/2\right)
    +\frac{4}{\delta^2} KM_0. 
\eeao
Finally, an application of \eqref{B4} yields
\beao
    \limsup_{n\to\infty}\P\left(\sup_{g\in \mathcal G_r}\left\|E_{n,N^{(\Delta)}}(g)\right\|>\delta\right)
    \leq \frac{\varepsilon}{2}+\frac{\varepsilon}{2},
\eeao
and hence, the statement follows.
\hfill$\Box$

\end{appendix}

\end{document}